\documentclass[11pt,reqno]{amsart}

\usepackage[margin=1.15in]{geometry}
\usepackage{amsmath,amssymb,amsthm,mathtools}
\usepackage[T1]{fontenc}
\usepackage{lmodern}
\usepackage[expansion=false]{microtype}
\usepackage{xcolor}
\usepackage[colorlinks=true,linkcolor=blue!55!black,citecolor=blue!55!black,
            urlcolor=blue!55!black]{hyperref}
\hypersetup{
  pdftitle={Kahler-Einstein cone metrics and parabolic bundles},
  pdfauthor={Martin de Borbon},
  pdfsubject={Complex differential geometry},
  pdfkeywords={Kahler-Einstein metrics; cone singularities; parabolic bundles;
               Hermitian-Einstein metrics; Miyaoka-Yau inequality}}

\newtheorem{theorem}{Theorem}[section]
\newtheorem{proposition}[theorem]{Proposition}
\newtheorem{lemma}[theorem]{Lemma}
\newtheorem*{theoremA}{Theorem A}
\newtheorem*{theoremB}{Theorem B}
\newtheorem*{theoremC}{Theorem C}
\newtheorem*{theoremD}{Theorem D}
\theoremstyle{definition}
\newtheorem{definition}[theorem]{Definition}

\theoremstyle{remark}
\newtheorem{remark}[theorem]{Remark}

\newcommand{\C}{\mathbb{C}}
\newcommand{\R}{\mathbb{R}}
\newcommand{\Xo}{X^{\circ}}
\newcommand{\cO}{\mathcal{O}}
\newcommand{\cV}{\mathcal{V}}
\newcommand{\rk}{\operatorname{rk}}
\newcommand{\im}{\operatorname{im}}
\newcommand{\Gr}{\operatorname{Gr}}
\newcommand{\Ric}{\operatorname{Ric}}
\newcommand{\tr}{\operatorname{tr}}
\newcommand{\End}{\operatorname{End}}
\newcommand{\Hom}{\operatorname{Hom}}
\newcommand{\Id}{\operatorname{Id}}
\newcommand{\Vol}{\operatorname{Vol}}
\newcommand{\Res}{\operatorname{Res}}
\newcommand{\ch}{\operatorname{ch}}
\newcommand{\pardeg}{\operatorname{par\,deg}}
\newcommand{\parc}{\operatorname{par\,c}}
\newcommand{\parch}{\operatorname{par\,ch}}
\newcommand{\supp}{\operatorname{supp}}
\newcommand{\Tlog}{T_X(-\log D)}
\newcommand{\OmLog}{\Omega_X^1(\log D)}

\begin{document}

\title[KE metrics and parabolic bundles]
      {K\"ahler--Einstein cone metrics and parabolic bundles}

\author{Martin de Borbon}
\address{Loughborough University, Loughborough LE11 3TU, UK}
\email{m.de-borbon@lboro.ac.uk}
\date{}
\subjclass[2020]{Primary 32Q20, 53C55; Secondary 14J60, 32W20}
\keywords{K\"ahler--Einstein metrics, cone singularities, parabolic bundles, Hermitian--Einstein metrics, Miyaoka--Yau inequality}

\begin{abstract}
Let \(X\) be a compact K\"ahler manifold, let \(D=\sum_iD_i\) be a simple normal crossing
divisor, and fix \(\beta_i\in(0,1)\). We introduce a parabolic structure \(TX_*\) on
\(TX\), given along each \(D_i\) by the filtration \(0\subset TD_i\subset TX|_{D_i}\) with
weight \(w_i=1-\beta_i\) on the normal quotient. If \(X\setminus D\) carries a
K\"ahler--Einstein metric \(\omega\) with cone angle \(2\pi\beta_i\) along \(D_i\), we show
that \(TX_*\) is parabolic polystable with respect to \([\omega]\). When the Einstein
constant \(\lambda\) is non-zero, we introduce parabolic structures on the Tian extension
(\(\lambda>0\)) and on Simpson's canonical Higgs bundle (\(\lambda<0\)), and prove that
these are polystable too. Combined with the Bogomolov--Gieseker inequality for parabolic
bundles, this yields a Miyaoka--Yau inequality for these pairs.
\end{abstract}

\maketitle

\section{Introduction}

The tangent bundle of a compact K\"ahler--Einstein manifold \((X^n, \omega)\) is slope polystable: the
K\"ahler--Einstein metric is Hermitian--Einstein on \(TX\), and polystability follows from
the easy direction of the Kobayashi--Hitchin correspondence \cite[Ch.~V, \S8]{Kobayashi}.
When the Einstein constant is positive, Tian \cite{Tian92} proved the finer statement that
the extension of \(TX\) by \(\cO_X\) defined by the K\"ahler class \([\omega]\) is polystable; when it
is negative, Simpson \cite{Simpson} defined a canonical Higgs bundle on \(\cO_X\oplus TX\)
which is polystable. Together with the Bogomolov--Gieseker inequality, the two constructions recover the
Miyaoka--Yau inequality
\[
   \bigl(2(n+1)c_2(X)-n\,c_1(X)^2\bigr)\cdot[\omega]^{n-2}\ge0 .
\]
In this paper we develop a parabolic analogue of this
picture for K\"ahler--Einstein metrics with cone singularities along a simple normal
crossing divisor.

Let \(X\) be a compact K\"ahler manifold of complex dimension \(n\) and let
\(D=\sum_{i=1}^{N}D_i\) be a simple normal crossing divisor with irreducible
components \(D_i\). Fix cone angles \(2\pi\beta_i\) with \(0<\beta_i<1\) and put
\[
   w_i:=1-\beta_i,
   \qquad
   \Delta:=\sum_{i=1}^{N}w_iD_i,
   \qquad
   \Xo:=X\setminus D.
\]
Our main results are as follows.

\begin{theoremA}
Let \(\omega\) be a K\"ahler--Einstein metric on \(\Xo\) with cone angle \(2\pi\beta_i\)
along \(D_i\). Endow \(TX\) with the parabolic structure \(TX_*\) of
Definition~\ref{def:parTX}: along \(D_i\), it is determined by the subbundle \(TD_i\subset TX|_{D_i}\) and the weight
\(w_i\). Then \(TX_*\) is parabolic
polystable with respect to \(\Omega:=[\omega]\).
\end{theoremA}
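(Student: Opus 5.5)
The plan is to run the easy direction of the Kobayashi--Hitchin correspondence, with the cone metric playing the role of an adapted Hermitian--Einstein metric on the parabolic bundle \(TX_*\). The argument has three steps.

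\textbf{Step 1: the metric \(h\) and the Einstein condition.} Let \(h\) be the Hermitian metric on \(TX|_{\Xo}\) induced by \(\omega\). The Kähler--Einstein equation \(\Ric(\omega)=\lambda\omega\) says exactly that the Chern curvature \(F_h\) satisfies \(\sqrt{-1}\Lambda_\omega F_h=\lambda\,\Id\) on \(\Xo\). Since the scalar curvature is constant, this is the Hermitian--Einstein equation with constant \(\lambda\).

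\textbf{Step 2: \(h\) is adapted to the parabolic structure.} We check that \(h\) is adapted to \(TX_*\) in the sense of Simpson and Mochizuki. Near a point of \(D_i\) take coordinates \(z_1,\dots,z_n\) with \(D_i=\{z_1=0\}\). Then \(\omega\) is quasi-isometric to the model \(|z_1|^{2\beta_i-2}\sqrt{-1}dz_1\wedge d\bar z_1+\sum_{j\ge2}\sqrt{-1}dz_j\wedge d\bar z_j\). This gives \(|\partial_{z_1}|_h\sim|z_1|^{\beta_i-1}=|z_1|^{-w_i}\), while the vectors \(\partial_{z_j}\), \(j\ge 2\), which span \(TD_i\), have bounded norm. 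So a section of \(TX\) has \(h\)-norm growth measured by weight \(0\) along \(TD_i\) and weight \(w_i\) on the normal quotient, which is the filtration of Definition~\ref{def:parTX}. At crossings the same model holds in each normal direction.

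For the Chern--Weil step we also need finite energy, \(\int_{\Xo}|F_h|^2\omega^n<\infty\), or at least \(|\Lambda F_h|\) bounded together with enough control to integrate by parts. Here \(\Lambda F_h\) is constant, and the \(L^2\) bound on the full curvature follows from the local \(C^{2,\alpha}\)-type regularity of cone Kähler--Einstein metrics (Jeffres--Mazzeo--Rubinstein, Guenancia--Păun). Those results give bounded curvature in the edge sense, and integrability follows because \(\omega^n\) has finite volume.

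\textbf{Step 3: the Chern--Weil argument.} Let \(\mathcal F\subset TX\) be a saturated coherent subsheaf with the induced parabolic structure. Off a codimension-two set \(S\), \(\mathcal F\) is a subbundle of \(TX|_{\Xo}\), with orthogonal projection \(\pi\) and second fundamental form \(\bar\partial\pi\). The Gauss--Codazzi formula gives
\[
\tfrac{\sqrt{-1}}{2\pi}\int\tr(\pi\Lambda F_h)\,\omega^n/n!-\tfrac1{2\pi}\int|\bar\partial\pi|^2\omega^n/n!=\int c_1(\mathcal F,h_{\mathcal F})\wedge\omega^{n-1}/(n-1)!.
\]
The key claim is that the right-hand side equals \(\pardeg_\Omega(\mathcal F_*)\). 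This is the standard result of Simpson and Mochizuki for adapted, acceptable-type metrics: the integral of the curvature of the induced metric on \(\det\mathcal F\) over \(\Xo\) computes \(c_1(\mathcal F)\cdot\Omega^{n-1}\) plus the boundary correction \(\sum_i(\text{weights})\,[D_i]\cdot\Omega^{n-1}\). Because \(h|_{\det\mathcal F}\) behaves like \(\prod|z_i|^{-2a_i}\) times a bounded metric, the correction is exactly the parabolic weight term. Applying the same formula to \(TX\) itself gives \(\pardeg(TX_*)=\lambda n\Vol/(2\pi)\). Hence \(\mu(\mathcal F_*)\le\mu(TX_*)\), with equality only when \(\bar\partial\pi=0\). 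In the equality case \(\pi\) is holomorphic and parallel, so \(\mathcal F\) is an \(h\)-orthogonal direct summand on \(\Xo\), and it extends across \(D\) as a parabolic direct summand by Hartogs and boundedness of \(\pi\). Induction on rank then yields polystability.

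\textbf{Main obstacle.} The hardest step is the degree identity in Step 3 for an arbitrary subsheaf \(\mathcal F\). Its weights along \(D_i\) depend on how \(\mathcal F|_{D_i}\) meets \(TD_i\), and the induced metric need not be of pure model type. I would first establish the inequality \(\int c_1(\mathcal F,h)\wedge\omega^{n-1}\le\pardeg\), which is all that stability needs. To prove it, approximate \(\omega\) by smooth Kähler forms \(\omega_\epsilon\), write \(\log\det h_{\mathcal F}\) against the model metric, and use Poincaré--Lelong. The norm estimate \(|s|_h\lesssim|z_1|^{-a}\) for sections of \(\det\mathcal F\), with \(a\) the induced weight, shows that the error current is bounded by the weight contribution. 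I would then control the cutoff terms near \(D\) using \(\omega\)-capacity zero of \(D\) and finite energy.
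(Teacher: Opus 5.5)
\textbf{There is a genuine gap in Step 3.} Your outline is the paper's architecture: Hermitian--Einstein from \(\Ric(\omega)=\lambda\omega\), adaptedness from the product cone model, Chern--Weil, a holomorphic projection in the equality case, and induction on rank. But the step that carries all the analytic content is not established. That step compares \(\int_{\Xo}c_1(\mathcal F,h_{\mathcal F})\wedge\Phi_\omega\) with \(\pardeg_\Omega(\mathcal F_*)\), and the reasons you give do not hold.

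\emph{The inequality points the wrong way.} Chern--Weil bounds the analytic degree above by \(\lambda k\Vol_\omega(\Xo)/2\pi\). So stability needs \(\pardeg_\Omega(\mathcal F_*)\le\int_{\Xo}c_1(\mathcal F,h_{\mathcal F})\wedge\Phi_\omega\), not the reverse.

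\emph{The standard comparisons do not apply.} The results of Simpson, Mochizuki, Li and Jiang--Li rely on hypotheses such as acceptability, \(L^2\)-control of derivatives of the comparison endomorphism, or \(F_h\in L^2\) together with a compatibility condition that already contains the current inequality at issue.

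\emph{The finite-energy claim is unsupported.} The \(C^{2,\alpha,\beta}\) regularity of Guenancia--P\u{a}un controls the metric, i.e.\ second derivatives of the potential, not its curvature. The curvature of a conical K\"ahler--Einstein metric is in general unbounded for \(\beta>\tfrac12\), even along a smooth divisor. For simple normal crossings with arbitrary angles, the works you cite give no curvature estimate.

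\emph{The determinant metric is not of model type.} \(h_{\mathcal F}\) is not \(\prod_i|z_i|^{-2a_i}\) times a metric bounded above and below. The quotient \(\Psi=\prod_i\rho_i^{2\delta_i(\mathcal F)}h_{\mathcal F}/q\) is bounded above; that is your norm estimate. But it degenerates near the crossings and along the proper analytic subset of \(D_j\) where \(\mathcal F\) meets \(TD_j\) non-generically. Capacity cut-offs with a single integration by parts would then need \(L^2\)-control of \(\partial\log\Psi\), which you do not have.

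\textbf{The missing idea} is to use only \(\mathrm{i}\Lambda_\omega F_h=\lambda\Id\) and the local model, through two-sided control of \(u=\log\Psi\). This function satisfies \(\mathrm{i}\partial\bar\partial u=2\pi(\eta-[\alpha_{\mathcal F}])\) on \(\Xo\).
\begin{itemize}
\item Upper bound: the positive-weight graded piece has rank one, so every \(e_I\) has \(D_j\)-weight \(0\) or \(w_j\). When \(\delta_j(\mathcal F)=0\), the weight-\(w_j\) Pl\"ucker coefficients vanish on \(D_j\). Together these give \(u\le C\).
\item Lower bound, one component at a time: \(u\ge\log|F_j|^2-C-A\sum_{m\ne j}|\log\rho_m|\) for a Pl\"ucker coefficient \(F_j\not\equiv0\) on \(D_j\).
\item Integrability: the sub-mean-value inequality in \(z_j\), plus \(L^p\)-integrability of \(\log|F_j(0,\cdot)|\) against the cone measure, give \(u\in L^1(dV_\omega)\) and
\[
\int_{\{\varepsilon<\rho_i<\varepsilon_0\}}|u|\,|d\log\rho_i|^2_\omega\,dV_\omega\le C\bigl(1+\log(\varepsilon_0/\varepsilon)\bigr).
\]
\item Cut-off: take \(\chi_T=\gamma\bigl((s_D-T_0)/(T-T_0)\bigr)\) and move both derivatives onto \(\chi_T\). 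The boundary term is then \(O(1/T)\). Monotone convergence gives both \(\int_{\Xo}|\bar\partial\pi|^2dV_\omega<\infty\) and the exact identity of Theorem~\ref{thm:degree}.
\end{itemize}

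\textbf{A smaller point concerns the equality case.} There \(\pi\) is bounded only in \(h\)-norm. In the frame \(\partial_{z_\alpha}\) its coefficients may grow like \(\rho_j^{-w_j}\). Hartogs does not apply across the divisor \(D\), so the extension uses the Riemann extension theorem together with \(w_j<1\). You must then check that the extension preserves the filtrations and that its image and kernel are locally abelian (Lemmas~\ref{lem:projectionextension} and~\ref{lem:directsummands}).
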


\begin{theoremB}
Let \(\omega\) be as in Theorem~A and suppose that \(\Ric(\omega)=\lambda\omega\) with
\(\lambda>0\). Let \(E\) be the extension \(0\to\cO_X\to E\xrightarrow{p}TX\to0\) whose
class is a non-zero multiple of \(\Omega\). Endow \(E\) with the parabolic structure
\(E_*\) of Definition~\ref{def:Tian-parabolic}: along \(D_i\), it is determined by the subbundle
\(p^{-1}(TD_i)\subset E|_{D_i}\) and the weight \(w_i\).
Then \(E_*\) is parabolic polystable with respect to \(\Omega\).
\end{theoremB}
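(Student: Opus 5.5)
Following Tian's smooth argument, I would construct a Hermitian--Einstein metric on \(E|_{\Xo}\) that is adapted to the parabolic structure \(E_*\), and then invoke the easy direction of the parabolic Kobayashi--Hitchin correspondence. This is the same mechanism used for Theorem~A with \(TX_*\) and the cone metric \(\omega\).

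\textbf{Step 1: a holomorphic model of \(E\).} Write \(\Ric(\omega)=\lambda\omega\) on \(\Xo\) and realise \(E\) as \(\cO_X\oplus TX\) with the twisted \(\bar\partial\)-operator
\[
\bar\partial_E(f,v)=\bigl(\bar\partial f + c\,\iota_v\omega,\ \bar\partial v\bigr),
\]
where \(c\neq0\) is fixed by the multiple of \(\Omega\). On \(\Xo\) I take \(h_E=\mu\oplus\omega\), with \(|1|^2=\mu\) a suitable constant. Tian's computation shows that the Chern curvature of \(h_E\) has mean curvature a constant multiple of the identity, and it carries over verbatim on \(\Xo\) because it is pointwise. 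This gives \(i\Lambda_\omega F_{h_E}=\gamma\,\Id\), and the relation between \(\lambda\), \(c\) and \(\mu\) fixes \(\mu\).

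The form \(\omega\) is only a cone metric, so \(c\,\iota_v\omega\) is not smooth across \(D\). I would therefore choose a smooth representative \(\omega_0\in\Omega\) and use \(\omega=\omega_0+i\partial\bar\partial\varphi\), with \(\varphi\) Hölder continuous. The gauge transformation \((f,v)\mapsto(f+c\,\partial\varphi(v),v)\) identifies the two extension structures on \(\Xo\), and this is what identifies the holomorphic bundle with the smooth extension \(E\) over all of \(X\).

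\textbf{Step 2: adaptedness.} I need \(h_E\) to be compatible with the filtration \(p^{-1}(TD_i)\subset E|_{D_i}\) with weight \(w_i\). Near a point of \(D_i\), take coordinates with \(D_i=\{z_1=0\}\). The cone metric is quasi-isometric to \(|z_1|^{2\beta_i-2}|dz_1|^2+\sum_{j\ge2}|dz_j|^2\), so \(|\partial_{z_1}|^2\sim|z_1|^{-2w_i}\) while tangential vectors and the \(\cO_X\) factor have bounded norm. This is exactly the growth condition for the weight \(w_i\) on the quotient \(E|_{D_i}/p^{-1}(TD_i)\cong N_{D_i}\).

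The gauge term \(c\,\partial\varphi\) is where this needs care. It must not spoil the estimate, which requires \(|\partial\varphi|_{\omega}\) to be bounded. This holds because \(\varphi\) is \(C^{2,\alpha}\) in the conic sense, so \(\partial\varphi(\partial_{z_1})\) is \(O(|z_1|^{\beta_i-1}\cdot|z_1|^{\ldots})\), controlled by the conic norm.

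\textbf{Step 3: polystability.} I apply the argument used for Theorem~A. For a saturated subsheaf \(F\subset E\), the Chern--Weil formula gives
\[
\deg_{h_E}(F)=\mathrm{rk}F\cdot\gamma\,\Vol-\|\bar\partial\pi\|^2 .
\]
The left-hand side equals \(\pardeg(F_*)\) computed with the induced parabolic structure, because the metric is adapted and has finite energy: \(F_{h_E}\in L^2\), as follows from the conic curvature bounds. Hence \(\mu_{\mathrm{par}}(F_*)\le\mu_{\mathrm{par}}(E_*)\). In the case of equality \(\bar\partial\pi=0\), so the second fundamental form vanishes and \(E_*\) splits holomorphically and orthogonally, which gives polystability.

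\textbf{Main obstacle.} The hardest part is the identification \(\deg_{h_E}F=\pardeg F_*\) for the subsheaves. It requires the integrability near the crossings of \(D\), and it requires the adapted metric to be good in Mochizuki's or Simpson's sense. Since Theorem~A presumably already establishes this for \(TX_*\), I would reduce to it. The twisted part of \(F_{h_E}\) is \(c\,\omega\otimes\)(bounded terms), which is \(L^2\) with respect to \(\omega\), so the same integration-by-parts lemma applies.
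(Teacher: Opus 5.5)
Your Steps~1 and~2 follow the paper's construction. The paper also puts \(1\oplus g\) on the representative \(\bar\partial_{t\omega}\) over \(\Xo\); your constant \(|1|^2\) on \(\cO_X\) is equivalent to its choice \(t=\sqrt{\lambda/(n+1)}\). It then gauges back to \(\bar\partial_{t\omega_0}\) with \(t\,\mathrm{i}\partial\psi\), and gets uniform equivalence from the Guenancia--P\u{a}un bound \(|z_j|^{1-\beta_j}|\partial_{z_j}\psi|\le C\), i.e.\ \(|\partial\psi|_\omega\le C\). You skip one step. Adaptedness is defined in a \emph{holomorphic} adapted frame, and \((0,\partial_{z_j})\) in the \(C^\infty\) splitting is not a holomorphic section of \(E\). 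The paper compares with a local holomorphic splitting, which differs from the \(C^\infty\) one by a smooth \((1,0)\)-form bounded in \(\omega\)-norm by (B2), and only then reads off the model metric in the frame \(1,\partial_{z_1},\dots,\partial_{z_n}\).

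The genuine gap is Step~3. You obtain \(\deg_h\cV=\pardeg_\Omega(\cV_*)\) from \(F_h\in L^2\), which you say follows from conic curvature bounds. It does not. Conical \(C^{2,\alpha,\beta}\) regularity controls \(\psi\) to second order, i.e.\ the metric \(g\). The Riemann curvature of \(g\) involves four derivatives of \(\psi\) and need not be bounded near \(D\); typically it is unbounded when some \(\beta_i>1/2\). Nothing in (C1)--(C2) gives \(\int|F_g|^2dV_\omega<\infty\) near the crossings of an snc divisor.

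Even granted, \(L^2\) curvature plus mutual boundedness is not what the available comparison theorems require. Li also needs \(L^2\) control of the derivative of the endomorphism relating \(h\) to his reference metric. Jiang--Li's compatibility condition already contains the current inequality you are trying to prove.

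The paper avoids curvature integrability altogether. Theorem~\ref{thm:degree} proves the degree identity for any locally abelian \(E_*\) satisfying \eqref{eq:oneweight}, using only uniform adaptedness and \(\mathrm{i}\Lambda_\omega F_h=\mu\Id\). The steps are:
\begin{enumerate}
\item renormalize \(h_\cV\) by \(\prod_i\rho_i^{2\delta_i(\cV)}\);
\item bound \(u=\log\Psi\) above, and below, one component at a time, by \(\log|F_j|^2\) minus logarithmic terms;
\item prove \(u\in L^1(dV_\omega)\) together with \eqref{eq:uL1log};
\item remove the boundary term with a logarithmic cut-off.
\end{enumerate}

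``Reducing to Theorem~A'' is only meaningful in this form, since a saturated subsheaf of \(E\) is in general not a subsheaf of \(TX\). What you should do is check the hypotheses of Theorem~\ref{thm:HEcriterion} for \(E_*\) and apply it: \(E_*\) is locally abelian with adapted frame \(1,\partial_{z_1},\dots,\partial_{z_n}\); its sole positive weight along \(D_i\) is \(w_i\), with \(\Gr^i_{w_i}(E_*)=N_i\) of rank one; and \(h\) is uniformly adapted and Hermitian--Einstein.

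That theorem also supplies what your equality case omits. \(\bar\partial\pi_\cV=0\) only gives a splitting over \(\Xo\setminus Z\). Parabolic polystability further needs:
\begin{itemize}
\item \(\pi_\cV\) to extend across \(D\) to an endomorphism preserving every filtration (Lemma~\ref{lem:projectionextension}, via the coefficient bounds from uniform adaptedness);
\item locally abelian summands with split filtrations (Lemma~\ref{lem:directsummands});
\item an induction on rank.
\end{itemize}
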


\begin{theoremC}
Let \(\omega\) be as in Theorem~A and suppose that \(\Ric(\omega)=\lambda\omega\) with
\(\lambda<0\). Endow \(E:=\cO_X\oplus TX\) with the parabolic Higgs bundle structure
\((E_*,\theta)\) of Definition~\ref{def:HSbundle}: along \(D_i\), it is determined by the subbundle 
\(\cO_{D_i}\oplus TD_i\subset E|_{D_i}\) and the weight \(w_i\); and \(\theta\) is the
tautological Higgs field.
Then \((E_*,\theta)\) is parabolic Higgs polystable with respect to \(\Omega\).
\end{theoremC}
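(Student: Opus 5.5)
The plan follows Simpson's argument in the compact case, run on \(\Xo\) with the cone metric \(\omega\) and adapted to the parabolic weights \(w_i\). The Higgs field is the tautological map \(\theta\colon TX\to \cO_X\otimes\Omega^1_X\) given by \(v\mapsto 1\otimes v^\flat\), and \(\theta\) vanishes on \(\cO_X\). It is convenient to identify \(E\) with \(\cO_X\oplus TX\) and write \(\theta(s,v)=(0,\,\cdot)\); the precise convention is the one of Definition~\ref{def:HSbundle}. On \(\Xo\) I will use the Hermitian metric
\[
h = c\,|\cdot|^2 \oplus g_\omega
\]
on \(\cO\oplus TX\), with the constant \(c>0\) chosen in terms of \(\lambda\). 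With \(\theta^*\) the adjoint of \(\theta\) with respect to \(h\), the Hitchin connection is \(D_h=\nabla_h+\theta+\theta^*\), and its curvature contracted with \(\omega\) is
\[
\Lambda\bigl(F_h+[\theta,\theta^*]\bigr).
\]
In the smooth case, \(\Ric=\lambda\omega\) with \(\lambda<0\) makes this a constant multiple of the identity, once the normalisation \(c\) is taken proportional to \(-\lambda\). The constant must also match the parabolic slope of \(E_*\).

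\textbf{Step 1: Hermitian--Yang--Mills on \(\Xo\).} On the \(TX\) summand, \(\Lambda F_{g_\omega}\) is the Ricci endomorphism, equal to \(\lambda\Id\). The term \(\Lambda[\theta,\theta^*]\) contributes \(-c\,\Id_{TX}\) on \(TX\) and \(+nc\) on \(\cO\), up to sign conventions. Choosing \(c\) appropriately gives the same constant \(\mu=\lambda/(n+1)\cdot(\text{normalisation})\) on both summands. The metric \(h\) is therefore Hermitian--Einstein for the Higgs bundle on \(\Xo\), and this computation is pointwise and routine.

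\textbf{Step 2: adaptedness of \(h\) to the parabolic structure.} I will show that \(h\) is adapted to \((E_*,\theta)\) in the sense required by the easy direction of the parabolic Kobayashi--Hitchin correspondence. This step uses the local model of the cone metric near \(D_i\). In the coordinate \(z_1\) normal to \(D_i\), \(|\partial_{z_1}|_\omega^2\sim |z_1|^{-2w_i}\), while tangential vectors stay bounded. This matches the filtration \(\cO_{D_i}\oplus TD_i\subset E|_{D_i}\) with weight \(w_i\) on the quotient, which is the \(TX\)-normal direction, as in Definition~\ref{def:parTX}. The flat factor \(\cO\) has weight \(0\). I will then need the following standard facts, presumably already proven for Theorem~A:
\begin{itemize}
\item the curvature is in \(L^2\) or has finite mass;
\item the Chern–Weil integrals compute parabolic degrees, that is, \(\pardeg\) of a saturated subsheaf \(F\) equals \(\frac{i}{2\pi}\int_{\Xo}\tr(\pi F_h)\wedge\omega^{n-1}\) minus \(\|\bar\partial\pi\|^2\) terms;
\item the cone metric \(\omega\) suffices as background, for example by approximating with the smooth class representative and controlling boundary terms via a cut-off argument near \(D\), using that \(D\) has capacity zero.
\end{itemize}
I expect this step to be the main obstacle. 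Two specific issues arise. First, the second fundamental form of a \(\theta\)-invariant subsheaf need not be smooth across \(D\). Second, the contributions of \([\theta,\theta^*]\) must be controlled near the cone. These terms are still bounded because \(\theta^*\) involves \(g_\omega\) and \(\theta\) is an isometry-type map, so \(|\theta|_h\) is constant.

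\textbf{Step 3: stability via Chern–Weil.} Let \(F\subset E\) be a saturated \(\theta\)-invariant subsheaf with orthogonal projection \(\pi\). The Higgs Chern–Weil formula gives
\[
\pardeg F = \mu\rk F\,\Vol - \|\bar\partial\pi\|^2 - \|[\pi,\theta]\|^2 \le \mu\,\rk F\,\Vol,
\]
so the parabolic slope of \(F\) is at most that of \(E_*\). When equality holds, \(\bar\partial\pi=0\) and \(\pi\) commutes with \(\theta\) on \(\Xo\). Hence \(\pi\) splits \(E|_{\Xo}\) holomorphically and compatibly with \(\theta\). The splitting extends across \(D\) by Hartogs and boundedness of \(\pi\), and it is compatible with the filtration because \(\pi\) is bounded in an adapted metric. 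Iterating this splitting yields polystability. Finally, as a consistency check, I will verify directly that \(\pardeg E_*=-\sum w_i\,\cdots\) agrees with \(\mu\Vol\). This follows from \(c_1(X)+\sum w_iD_i=\lambda[\omega]/2\pi\), which encodes the log KE equation.
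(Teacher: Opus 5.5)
Your architecture matches the paper's. The metric $c\oplus g$ is carried by the constant, filtration-preserving automorphism $\operatorname{diag}(\sqrt c,1)$ to the paper's $h=1\oplus g$ with rescaled field $t\theta$, where $t=c^{-1/2}$. Your Steps~2--3 are then Proposition~\ref{prop:higgsdegree} together with the splitting argument from the proof of Theorem~\ref{thm:HEcriterion}.

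\emph{Step~1 is wrong as written, and it is the only place where $\lambda<0$ enters.}
\begin{itemize}
\item The tautological field goes the other way: $\theta$ sends $\cO_X$ into $TX\otimes\Omega^1_X$ by $a\mapsto a\,\Id_{TX}$. Your map $v\mapsto 1\otimes v^\flat$ uses the metric and is not holomorphic.
\item For the correct $\theta$ and $h=c\oplus g$, in a $g$-unitary frame one gets $\mathrm{i}\Lambda_\omega[\theta,\theta^{*h}]=\operatorname{diag}(-n/c,\,c^{-1}\Id_{TX})$.
\item So the two blocks of \eqref{eq:HS} are $-n/c$ and $\lambda+1/c$. They agree if and only if $c=-(n+1)/\lambda$, which is possible precisely because $\lambda<0$. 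Thus $c\propto -1/\lambda$, not $-\lambda$, and the common value is $\mu=n\lambda/(n+1)$.
\item The signs you wrote, $+nc$ on $\cO$ and $-c$ on $TX$, are those of the Tian extension in Theorem~B. With them the matching condition is $c=\lambda/(n+1)$, which has no positive solution when $\lambda<0$.
\end{itemize}
This sign is not a convention. It is exactly why the Higgs bundle, rather than the extension, is the right object when $\lambda<0$.

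\emph{Further corrections.}
\begin{itemize}
\item In Step~2 you do not need an $L^2$ bound on the curvature, which the paper never establishes. Theorem~\ref{thm:degree} uses only three inputs: uniform adaptedness of $h$, the rank-one weight condition \eqref{eq:oneweight}, and a bound on $\mathrm{i}\Lambda_\omega F_h$. In the Higgs case that bound follows from \eqref{eq:HS} and $|\theta|^2_{h,\omega}=n/c$. The only new input is the pointwise identity $\tr(\pi_\cV\,\mathrm{i}\Lambda_\omega F_h)=\mu k-|[\theta,\pi_\cV]|^2$ for $\theta$-invariant $\cV$. After that, the determinant estimates, weighted integrability and logarithmic cut-off go through unchanged.
\item In Step~3, Hartogs only removes the codimension-two set $Z\cap\Xo$. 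Across the divisor $D$ the projection is not bounded in a smooth frame: its coefficients in an adapted frame are a priori only $O(\rho_j^{-w_j})$. It extends holomorphically because every weight is $<1$ (Lemma~\ref{lem:projectionextension}).
\item To iterate the splitting you need Lemma~\ref{lem:directsummands}. It shows the summands are locally abelian, and choosing compatible adapted frames makes the restricted metrics uniformly adapted. You also need $[\theta,\pi]=0$ on all of $X$, which follows by the identity theorem.
\item Your consistency check has the wrong sign: \eqref{eq:Ric-current} gives $c_1(X)-\sum_iw_i[D_i]=\lambda\Omega/2\pi$. In any case the slope identity comes for free by taking $\cV=E$ in the degree formula.
\end{itemize}
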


Write \(c_i(X):=c_i(TX)\),
\([D_i]:=c_1(\cO_X(D_i))\), and \(\iota_i:D_i\hookrightarrow X\) for the inclusion.
Following Tian \cite{TianCIME}, set
\begin{align}
   c_1(X,\Delta)&:=c_1(X)-\sum_iw_i[D_i],\label{eq:logc1}\\
   c_2(X,\Delta)&:=c_2(X)-\sum_iw_i\,(\iota_i)_*c_1(TD_i)
   +\sum_{i<j}w_iw_j[D_i]\cdot[D_j].\label{eq:logc2}
\end{align}
Combining Theorems A, B, C with the Bogomolov--Gieseker inequality for parabolic bundles
gives the following.

\begin{theoremD}
Assume \(n\ge2\) and let \(\omega\) be as in Theorem~A.
Then
\begin{equation}\label{eq:MYintro}
   \Bigl(2(n+1)c_2(X,\Delta)-n\,c_1(X,\Delta)^2\Bigr)\cdot\Omega^{n-2}\ge0 .
\end{equation}
\end{theoremD}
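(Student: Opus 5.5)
We prove Theorem~D by splitting according to the sign of the Einstein constant \(\lambda\) and applying the parabolic Bogomolov--Gieseker inequality,
\[
\bigl(2r\,\parc_2(F_*)-(r-1)\parc_1(F_*)^2\bigr)\cdot\Omega^{n-2}\ge0 ,
\]
to a rank \(r=n+1\) polystable parabolic (Higgs) bundle \(F_*\). Its parabolic Chern classes must match \(c_1(X,\Delta)\) and \(c_2(X,\Delta)\). For polystable objects the inequality follows by summing over stable summands, or directly from the existence of a Hermitian--Einstein metric, which is the source of polystability anyway.

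The first step is to compute the parabolic Chern classes of \(TX_*\). Along each \(D_i\) the filtration has a single jump: the quotient \(TX|_{D_i}/TD_i=N_{D_i}\), of rank one, carries weight \(w_i\). Using the standard formulas for parabolic Chern classes in terms of the underlying bundle and the weighted graded pieces \(\Gr\) along the \(D_i\) (as in the formulas the paper uses for the Bogomolov--Gieseker inequality), we get \(\parc_1(TX_*)=c_1(X)-\sum_iw_i[D_i]=c_1(X,\Delta)\). For the second Chern character there are three contributions:
\begin{enumerate}
\item a term \(-\sum_i w_i(\iota_i)_*c_1(N_{D_i})\);
\item a quadratic term \(\tfrac12\sum_iw_i^2[D_i]^2\);
\item a cross term \(\sum_{i<j}w_iw_j[D_i][D_j]\), coming from the normal bundles being the quotients at the intersections \(D_i\cap D_j\).
\end{enumerate}
Converting to \(\parc_2=\tfrac12\parc_1^2-\parch_2\) and using \(c_1(TX)|_{D_i}=c_1(TD_i)+[D_i]|_{D_i}\) yields exactly \(\parc_2(TX_*)=c_2(X,\Delta)\) as in \eqref{eq:logc2}. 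This bookkeeping is the step I expect to be the main obstacle. The difficulty is getting the signs and the \(w_i^2\) versus \(w_i\) terms right, and checking that the cross terms at normal crossings produce precisely \(w_iw_j[D_i][D_j]\). I would verify it first on \(\mathbb{P}^1\times\mathbb{P}^1\)-type local models, or by writing \(TX_*\) locally as a direct sum of rank-one parabolic pieces \(\cO(-w_iD_i)\)-like "line bundles" twisted by \(\mathbb{R}\)-divisors and applying the splitting principle.

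For \(\lambda\ne0\), apply Bogomolov--Gieseker to \(E_*\) from Theorem~B (\(\lambda>0\)) or to \((E_*,\theta)\) from Theorem~C (\(\lambda<0\)). In both cases \(E\) has a two-step filtration in which the trivial line \(\cO_X\) is a subbundle, or a summand, lying in the weight-zero part along every \(D_i\). The weighted quotient along \(D_i\) is again \(E|_{D_i}/p^{-1}(TD_i)\cong N_{D_i}\), respectively \((\cO\oplus TX)/(\cO\oplus TD_i)\cong N_{D_i}\), with weight \(w_i\). Hence the same computation gives \(\parc_k(E_*)=\parc_k(TX_*)\) for \(k=1,2\), by the Whitney formula, since the parabolic line \(\cO_X\) has trivial parabolic classes. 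Rank \(n+1\) then gives
\[
\bigl(2(n+1)c_2(X,\Delta)-n\,c_1(X,\Delta)^2\bigr)\cdot\Omega^{n-2}\ge0 .
\]

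For \(\lambda=0\) we have \(c_1(X,\Delta)\cdot\Omega^{n-1}=0\) from the cone Einstein equation, and in fact \(c_1(X,\Delta)=0\) in cohomology, since \(\Ric\omega\) represents \(c_1(X,\Delta)\) in the sense of currents. Theorem~A gives polystability of \(TX_*\), of rank \(n\), so Bogomolov--Gieseker yields \(c_2(X,\Delta)\cdot\Omega^{n-2}\ge0\). This is the required inequality because the \(c_1^2\) term vanishes. Alternatively, \(\cO_X\oplus TX_*\) is polystable of rank \(n+1\) with the same classes.
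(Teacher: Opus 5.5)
Your overall route is the paper's: compute the parabolic Chern classes, then apply the parabolic Bogomolov--Gieseker inequality. When \(\lambda=0\) it is applied to \(TX_*\), using \(c_1(X,\Delta)=0\) from the current identity for \(\Ric(\omega)\). When \(\lambda\neq0\) it is applied to the rank \(n+1\) bundles of Theorems~B and~C.

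\textbf{The cross term (3) does not exist.} The bookkeeping step you single out as the crux is wrong as stated, in a way that breaks the conclusion. In Mochizuki's formula the \(D_i\cap D_j\) contribution to \(\parch_2\) is \(\sum ab\,\rk_P\Gr^{i,j}_{a,b}[P]\), taken over simultaneous graded pieces. Here all of these vanish for \(a,b>0\). The reason is that in the adapted frame \(\partial_{z_1},\dots,\partial_{z_n}\) (together with \(1\) for the rank \(n+1\) bundles) no vector has positive weight along two components. At a point of \(D_i\cap D_j\), \(\partial_{z_i}\) has weights \((w_i,0)\) and \(\partial_{z_j}\) has weights \((0,w_j)\). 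The two normal lines are different lines, each parabolic along one component only. Hence
\[
\parch_2(TX_*)=\ch_2(X)-\sum_i\Bigl(w_i-\frac{w_i^2}{2}\Bigr)[D_i]^2 .
\]
The term \(\sum_{i<j}w_iw_j[D_i][D_j]\) of \(c_2(X,\Delta)\) comes entirely from expanding \(\frac12\parc_1(TX_*)^2=\frac12\bigl(c_1(X)-\sum_iw_i[D_i]\bigr)^2\).

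If you keep (3) inside \(\parch_2\), it cancels exactly that term in \(\frac12\parc_1^2-\parch_2\). You would then get \(\parc_2=c_2(X)-\sum_iw_i(\iota_i)_*c_1(TD_i)\). This differs from \(c_2(X,\Delta)\) by \(\sum_{i<j}w_iw_j[D_i]\cdot[D_j]\), which pairs positively with \(\Omega^{n-2}\) as soon as two components meet. So the identification \(\parc_2=c_2(X,\Delta)\) does not follow from the contributions you list. Your proposed splitting-principle check would catch this: a term \(ab[D_i][D_j]\) in \(\parch_2\) can only come from a parabolic line with positive weight along both components.

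\textbf{Stable to polystable.} This step is not just summing over stable summands. Summing gives \(\parch_2\cdot\Omega^{n-2}\le\sum_\alpha\gamma_\alpha^2\cdot\Omega^{n-2}/(2r_\alpha)\), where \(\gamma_\alpha\) is the parabolic \(c_1\) of the summand of rank \(r_\alpha\). To bound this by \(\gamma^2\cdot\Omega^{n-2}/(2r)\) you need the Hodge--Riemann relations applied to the primitive classes \(\gamma_\alpha-(r_\alpha/r)\gamma\); this is where equality of slopes enters. You also need the summands to be locally abelian before quoting the stable inequality.

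\textbf{Hermitian--Einstein alternative.} Deriving the inequality ``directly from the Hermitian--Einstein metric'' is not available off the shelf here. It would require a Chern--Weil formula for \(\parch_2\) with the conical metric, which is exactly what the paper's argument avoids.

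With the cross term corrected, the rest of your argument is sound and matches the paper. This includes the reduction for the Tian extension and the Higgs bundle via the locally split sequence, and the \(\cO_X\oplus TX_*\) alternative when \(\lambda=0\).
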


\subsection*{Previous work}
Precedents for this line of work are Guenancia \cite{Guenancia2016} and Guenancia--Taji
\cite{GuenanciaTaji}, on the stability of logarithmic and orbifold tangent sheaves and the
resulting Miyaoka--Yau inequality. Closer to Theorem~B is Li \cite[Theorem~1.4]{ChiLi},
on the stability of the extension of the logarithmic tangent sheaf on a log-Fano
pair. The boundary divisors in \cite{GuenanciaTaji, ChiLi} carry rational coefficients, while here \(\beta_i\in(0,1)\) is an arbitrary real number. 
The closest precedent to Theorem~A is Keller--Zheng \cite[Theorem~6.2]{KellerZheng}: they treat a smooth divisor under a condition
requiring \(\beta<\tfrac12\), and leave the simple normal crossing case open
\cite[\S7.1]{KellerZheng}.

For a smooth divisor, Song--Wang \cite{SongWang} prove \eqref{eq:MYintro} from a direct
Chern--Weil formula for conical K\"ahler--Einstein metrics using
the polyhomogeneous expansion of Jeffres--Mazzeo--Rubinstein \cite{JMR}. The route taken here is different:
Theorem~D is deduced from the parabolic Bogomolov--Gieseker inequality and polystability.

\subsection*{Outline}
Section~\ref{sec:parabolic} collects the background: parabolic bundles, a particular class of Hermitian metrics adapted to a parabolic structure that we call \emph{uniformly adapted metrics}, the Chern--Weil formula for the first Chern class of a saturated subsheaf, and K\"ahler cone metrics. 
Section~\ref{sec:technical} proves a criterion for parabolic polystability of a uniformly adapted Hermitian--Einstein metric over a K\"ahler cone metric on the base \(\Xo\), Theorem~\ref{thm:HEcriterion}. 
Its main analytic input is Theorem~\ref{thm:degree}, a Chern--Weil formula identifying the analytic degree of a saturated subsheaf with its parabolic degree. 
The criterion is applied to the parabolic tangent bundle and to the Tian extension in Sections~\ref{sec:thmA} and~\ref{sec:thmB}, which prove Theorems~A and~B. 
Section~\ref{sec:thmC} defines a parabolic structure on Simpson's canonical Higgs bundle
and proves Theorem~C. Section~\ref{sec:thmD} computes the parabolic Chern classes and applies the
parabolic Bogomolov--Gieseker inequality of Jiang--Li \cite{JiangLiBG} to prove Theorem~D.

\subsection*{AI declaration}
ChatGPT was used during the preparation of this manuscript for language editing, checking exposition, and discussing and checking aspects of the mathematical arguments. I take full responsibility for the mathematical content of the paper.

\section{Background}\label{sec:parabolic}

\subsection{Parabolic bundles}
We follow Mochizuki \cite[Ch.~3]{Mochizuki} and define parabolic bundles using increasing filtrations with weights in \([0,1)\); see also \cite[\S4.1]{dBP}. 
A \emph{parabolic bundle} on \((X, D)\), denoted by \(E_*\), consists of a holomorphic vector bundle
\(E\) on \(X\), called its underlying bundle, together with an increasing filtration by
holomorphic subbundles
\[
   F^i_c(E_*)\subset E|_{D_i},\qquad c\in[0,1),
\]
for every irreducible component \(D_i\) of \(D\).
The filtration has finitely many jumps, is right continuous
(\(F^i_c=\bigcap_{c'>c}F^i_{c'}\)), and exhaustive in the sense that
\(F^i_c=E|_{D_i}\) for \(c\) sufficiently close to \(1\). We set
\(F^i_{<0}:=0\), write \(F^i_{<c}:=\bigcup_{0\le c'<c}F^i_{c'}\) for \(c>0\), and put
\[
\Gr^i_c(E_*):=F^i_c/F^i_{<c} .
\]
The \emph{weights} along \(D_i\) are the jump values:
\[
   \operatorname{Wt}_i(E_*):=
   \bigl\{c\in[0,1):\Gr^i_c(E_*)\ne0\bigr\}.
\]

Near a point where \(D\) has \(\ell\) branches, relabel the components through the point
and choose \emph{adapted coordinates} \((z_1,\dots,z_n)\) on a neighbourhood \(U\) of the
point such that
\begin{equation}\label{eq:adap-coor}
   D\cap U=\{z_1\cdots z_\ell=0\},
   \qquad D_j\cap U=\{z_j=0\},\quad 1\le j\le\ell .
\end{equation}
We call \(U\) an \emph{adapted chart}. This relabelling is used in every local
computation: in an adapted chart, the index \(j\) always ranges over the components
\(D_1,\dots,D_\ell\) meeting the chart, and objects attached to \(D_i\), such as the
functions \(\rho_i\) of Section~\ref{ss:adapted} or the numbers \(\delta_i(\cV)\) of
\eqref{eq:deltai}, are written with the local index \(j\).

\begin{definition}\label{def:locallyabelian}
A parabolic bundle \(E_*\) is \emph{locally abelian} if every point of \(D\) has adapted
coordinates \eqref{eq:adap-coor} carrying a holomorphic frame \(e_1,\dots,e_r\) of \(E\)
such that
\begin{equation}\label{eq:adaptedframe}
   F^j_c(E_*)
   =\operatorname{span}_{\cO_{D_j}}
   \bigl\{e_\alpha|_{D_j}:
   \operatorname{wt}_{D_j}(e_\alpha)\le c\bigr\}
\end{equation}
for all \(1\le j\le\ell\) and \(c\in[0,1)\), where
\begin{equation}\label{eq:frameweight}
   \operatorname{wt}_{D_j}(e_\alpha)
   :=\min\bigl\{c\in[0,1):
        e_\alpha|_{D_j}\in F^j_c(E_*)\bigr\} \,.
\end{equation}
Such a frame is called \emph{adapted}.
\end{definition}

\begin{remark}
Definition~\ref{def:locallyabelian} is Mochizuki's compatibility condition
\cite[Definition~3.12 and Remark~3.13]{Mochizuki}; we use the terminology
\emph{locally abelian} of Iyer--Simpson \cite[\S2]{IyerSimpson}, see also \cite[\S4.1]{dBP}.
\end{remark}

For an arbitrary filtration multi-index
\(\mathbf c=(c_1,\dots,c_N)\in[0,1)^N\), define the \emph{constituent sheaf}
\begin{equation}\label{eq:constituentglobal}
   E_{\mathbf c}
   :=\ker\Bigl(E\longrightarrow
      \bigoplus_{i=1}^N\bigl(E|_{D_i}\bigr)/F^i_{c_i}(E_*)\Bigr).
\end{equation}
If \(E_*\) is locally abelian, then in adapted coordinates \eqref{eq:adap-coor} with an
adapted frame,
\begin{equation}\label{eq:constituents}
   E_{\mathbf c}=\bigoplus_{\alpha=1}^r\cO_U\cdot m_\alpha(\mathbf c) \, e_\alpha,
   \qquad
   m_\alpha(\mathbf c)
   :=\prod_{\{1\le j\le\ell\,:\,
      c_j<\operatorname{wt}_{D_j}(e_\alpha)\}}z_j ,
\end{equation}
so \(E_{\mathbf c}\) is locally free of rank \(r\). The constituent sheaf corresponding to the
zero multi-index is denoted by \(E_{\mathbf 0}\).

\begin{definition}\label{def:pardeg}
The parabolic first Chern class of a parabolic bundle \(E_*\) is
\begin{equation}\label{eq:parc1}
   \parc_1(E_*):=c_1(E)-\sum_{i=1}^N\sum_{c\in(0,1)}c\,\rk\Gr^i_c(E_*)\,[D_i]
   \ \in H^2(X,\R).
\end{equation}
For a K\"ahler class \(\Omega\),
\begin{equation}\label{eq:pardeg}
   \pardeg_\Omega(E_*):=\int_X\parc_1(E_*)\wedge\frac{\Omega^{n-1}}{(n-1)!},
   \qquad
   \mu_\Omega(E_*):=\frac{\pardeg_\Omega(E_*)}{\rk E}.
\end{equation}
\end{definition}

A coherent subsheaf \(\cV\subset E\) is \emph{saturated} if \(E/\cV\) is torsion free.
Outside an analytic subset of codimension at least two, it is a holomorphic subbundle of
\(E\). Its determinant extends uniquely across that subset as a line bundle, denoted by
\(\det\cV\), and \(c_1(\cV):=c_1(\det\cV)\). A saturated subsheaf is determined by its
restriction to the complement of a proper analytic subset. These facts are collected in
\cite[Ch.~2, \S1.1]{OSS}.

Near a general point of \(D_i\), the restriction of \(\cV\) is a subbundle of
\(E|_{D_i}\) and inherits the filtration by intersection with \(F^i_c(E_*)\); the
resulting parabolic structure is denoted by \(\cV_*\). We set
\begin{equation}\label{eq:deltai}
   \delta_i(\cV):=\sum_{c\in(0,1)}c\,\rk\Gr^i_c(\cV_*),
\end{equation}
where the ranks are constant away from a proper analytic subset of \(D_i\), and
\begin{equation}\label{eq:parc1V}
   \parc_1(\cV_*):=c_1(\cV)-\sum_i\delta_i(\cV)[D_i].
\end{equation}
The parabolic degree and slope of \(\cV_*\) are defined from
\eqref{eq:parc1V} by \eqref{eq:pardeg}.

\begin{definition}\label{def:stability}
A parabolic bundle \(E_*\) is \emph{stable} (resp.\ \emph{semistable})
with respect to \(\Omega\) if \(\mu_\Omega(\cV_*)<\mu_\Omega(E_*)\) (resp.\ \(\le\)) for
every saturated subsheaf \(\cV\subset E\) with \(0<\rk\cV<\rk E\). It is
\emph{polystable} if it is a direct sum of parabolic bundles which are
stable of the same parabolic slope; the direct sum is required to split every filtration
\(F^i_c\).
\end{definition}

\subsection{Uniformly adapted metrics}\label{ss:adapted}
For each \(i\), fix a smooth Hermitian metric \(h_i\) on \(\cO_X(D_i)\), let \(s_i\) be
the canonical section, and set
\[
   \rho_i:=|s_i|_{h_i}.
\]
After rescaling \(h_i\), one has \(0<\rho_i<1\) on \(X\setminus D_i\). For nonnegative
quantities on a specified set, write \(A\asymp B\) if \(C^{-1}B\le A\le CB\) there for
a uniform constant \(C\ge1\). For Hermitian forms the inequalities are understood in the
sense of quadratic forms. Constants denoted by \(C\) may change from line to line. In
adapted coordinates \eqref{eq:adap-coor}, \(\rho_j\asymp|z_j|\).

The compatibility between a Hermitian metric on \(E|_{\Xo}\) and the parabolic structure
used in Section~\ref{sec:technical} is uniform equivalence with an explicit diagonal
model in an adapted frame.

\begin{definition}\label{def:modelmetric}
Let \(E_*\) be a locally abelian parabolic bundle. On an adapted chart \(U\) as in
\eqref{eq:adap-coor}, with an adapted frame \(e_1,\dots,e_r\), the \emph{local model
metric} \(h_{\mathrm{mod},U}\) is the Hermitian metric on \(E|_{U\cap\Xo}\) for which this
frame is orthogonal and
\begin{equation}\label{eq:modelmetric}
   \Bigl|\sum_{\alpha=1}^{r}v^\alpha e_\alpha\Bigr|_{h_{\mathrm{mod},U}}^2
   :=\sum_{\alpha=1}^{r}
     \Bigl(\prod_{j=1}^{\ell}
       \rho_j^{-2\operatorname{wt}_{D_j}(e_\alpha)}\Bigr)|v^\alpha|^2 .
\end{equation}
\end{definition}

\begin{lemma}\label{lem:modelindependence}
The local model metrics obtained from different adapted frames, adapted coordinates, and
auxiliary metrics \(h_i\) are locally uniformly equivalent.
\end{lemma}

\begin{proof}
The functions \(\rho_i\) obtained from two smooth metrics on \(\cO_X(D_i)\) have a smooth
positive quotient, bounded above and below on a smaller chart. The same is true of
\(\rho_j/|z_j|\), so changes of adapted coordinates produce uniformly equivalent metrics on
a smaller chart.

It remains to compare adapted frames. Let \(f_1,\dots,f_r\) be another adapted frame and write
\(f_\gamma=\sum_\alpha A_{\alpha\gamma}e_\alpha\). If
\(\operatorname{wt}_{D_j}(e_\alpha)>
\operatorname{wt}_{D_j}(f_\gamma)\), then
\(f_\gamma|_{D_j}\in
F^j_{\operatorname{wt}_{D_j}(f_\gamma)}(E_*)\) by \eqref{eq:frameweight}, and
\eqref{eq:adaptedframe} gives \(z_j\mid A_{\alpha\gamma}\). Since all weights lie in
\([0,1)\), it follows that
\[
   |A_{\alpha\gamma}|\prod_{j=1}^{\ell}
      \rho_j^{-\operatorname{wt}_{D_j}(e_\alpha)}
   \le C\prod_{j=1}^{\ell}
      \rho_j^{-\operatorname{wt}_{D_j}(f_\gamma)}.
\]
Thus the change of frame is bounded between the two local model metrics. Applying the same
argument to its inverse gives the reverse bound.
\end{proof}

\begin{definition}\label{def:uniformlyadapted}
A Hermitian metric \(h\) on \(E|_{\Xo}\) is \emph{uniformly adapted} to \(E_*\) if every
point of \(D\) has an adapted chart \(U\) on which
\begin{equation}\label{eq:uniformlyadapted}
   h\asymp h_{\mathrm{mod},U}.
\end{equation}
\end{definition}

By Lemma~\ref{lem:modelindependence}, the choice of adapted frame, coordinates, and
auxiliary metrics in \eqref{eq:uniformlyadapted} is immaterial. If \(E_*\) is locally
abelian, patching the local model metrics of finitely many adapted charts covering \(D\)
with a smooth Hermitian metric on \(E\) by a partition of unity gives a uniformly adapted
metric on \(E|_{\Xo}\), and any two uniformly adapted metrics are uniformly equivalent on
\(\Xo\).

\begin{remark}
A uniformly adapted metric \(h\) is adapted to \(E_*\) in the growth sense of Mochizuki
\cite[\S3.5 and Definition~3.34]{Mochizuki}: by \eqref{eq:modelmetric} and
\eqref{eq:constituents}, together with the Riemann extension theorem applied to the
coefficients in an adapted frame, the holomorphic sections \(s\) of \(E\) over
\(U\cap\Xo\) with \(|s|_h=O\bigl(\prod_i\rho_i^{-c_i-\varepsilon}\bigr)\) near \(D\)
for every \(\varepsilon>0\) are exactly the sections of \(E_{\mathbf c}\).
\end{remark}

\subsection{Saturated subsheaves and Chern--Weil}
Let \(\cV\subset E\) be saturated of rank \(k\), let \(S\subset X\) be the singularity set
of \(E/\cV\), an analytic subset of codimension at least two, and let \(L:=\det\cV\). The
inclusion \(\Lambda^k\cV\to\Lambda^kE\) induces an injective morphism
\(\varsigma:L\to\Lambda^kE\), the \emph{determinant morphism}, which is nowhere zero on
\(X\setminus S\); in particular its zero locus has codimension at least two
\cite[Ch.~V, \S8]{Kobayashi}.

\begin{lemma}\label{lem:goodlocus}
There is a closed analytic subset \(Z\subset X\) of complex codimension at least two
such that on \(X\setminus Z\) the sheaf \(\cV\) is a holomorphic subbundle of \(E\),
the determinant morphism is nowhere zero, and every induced filtration step along
\(D_i\) is a subbundle of its generic rank.
\end{lemma}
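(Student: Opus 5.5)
We will take \(Z\) to be a finite union of closed analytic subsets, each of codimension at least two in \(X\). The first two are already available. The first is the singularity set \(S\) of \(E/\cV\), off which \(\cV\) is a subbundle; it has codimension \(\ge2\) by the facts collected from \cite[Ch.~2, \S1.1]{OSS}. The second is the zero locus of the determinant morphism \(\varsigma\), which is contained in \(S\) and also has codimension \(\ge 2\). So it remains to control the filtration steps along each \(D_i\), and the plan is to show that the bad set there is a proper analytic subset of \(D_i\). Such a subset has codimension \(\ge1\) in \(D_i\), hence codimension \(\ge2\) in \(X\).

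Fix \(i\) and work on \(D_i\setminus S\). There \(\cV|_{D_i}\) is a subbundle of \(E|_{D_i}\), since restricting a subbundle gives a subbundle. The filtration \(F^i_c(E_*)\) has finitely many jumps \(c\), and each \(F^i_c\) is a holomorphic subbundle of \(E|_{D_i}\). For each such \(c\) consider the sheaf map
\[
\phi_c\colon \cV|_{D_i}\longrightarrow (E|_{D_i})/F^i_c(E_*),
\]
which is a morphism of holomorphic vector bundles on \(D_i\setminus S\). Its kernel is the intersection \(\cV|_{D_i}\cap F^i_c\). The locus where \(\phi_c\) attains its maximal rank is the complement of the vanishing locus of minors of a given size, which is a closed analytic subset \(B_c\). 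Off \(B_c\) the rank is constant, so both the kernel and the image are subbundles, and the kernel has the generic rank. For this to give a proper subset of \(D_i\), the rank drop locus must be proper in each irreducible component of \(D_i\setminus S\). Here we use that \(D_i\) is irreducible and smooth (by the simple normal crossing hypothesis), so \(D_i\setminus S\) is connected. We also need \(D_i\not\subset S\), which holds since \(S\) has codimension \(\ge2\).

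The step I expect to be the main technical point is that \(B_c\) is closed in \(X\), not merely in \(D_i\setminus S\). To handle this, take \(Z_i\) to be the union of \(D_i\cap S\) and the closures \(\overline{B_c}\). Because \(B_c\) is defined by the vanishing of minors of \(\phi_c\), which are sections of a coherent sheaf on \(D_i\setminus S\), it is only locally analytic there. However, the union \(B_c\cup(D_i\cap S)\) can be described near points of \(S\) as follows. Choose local generators of the coherent sheaf \(\cV\) near a point of \(S\); these give a sheaf map \(\cO^m\to E\) whose image is \(\cV\). Then \(B_c\cup(D_i\cap S)\) is the locus in \(D_i\) where the composite \(\cO^m|_{D_i}\to (E|_{D_i})/F^i_c\) has less than generic rank, union \(D_i\cap S\). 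This is visibly analytic, so Remmert–Stein is not needed. Since \(D_i\cap S\) is analytic of codimension \(\ge2\), \(Z_i\) is a closed analytic subset of \(D_i\) that does not contain \(D_i\), hence has codimension \(\ge2\) in \(X\).

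Finally, set
\[
Z:=S\cup\varsigma^{-1}(0)\cup\bigcup_i Z_i .
\]
On \(X\setminus Z\) the three required properties hold by construction. The finiteness of the union follows from finitely many components \(D_i\) and finitely many jumps per component.
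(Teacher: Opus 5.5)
Your proof is correct. It has the same overall shape as the paper's: \(Z\) is \(S\) together with, for each \(i\), a proper analytic subset of \(D_i\), hence of codimension at least two in \(X\), off which the induced filtration is well behaved.

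The mechanism differs. The paper treats the finitely many graded pieces \(\Gr^i_c(\cV_*)\) as coherent sheaves on all of \(D_i\). It takes \(\Sigma_i\) to be the union of the loci where their fibre dimension exceeds the generic rank, which is analytic by semicontinuity. It then deduces that the filtration steps are subbundles, because off \(S\cup\Sigma_i\) the bundle \(\cV|_{D_i}\) is filtered with locally free graded pieces.

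You work with the steps directly, as kernels of the bundle maps \(\phi_c\colon\cV|_{D_i}\to E|_{D_i}/F^i_c\) on \(D_i\setminus S\). The subbundle property of generic rank is then immediate from constant rank. You then check by hand, using local generators of \(\cV\), that the degeneracy loci close up analytically across \(D_i\cap S\).

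That last step is exactly what the paper passes over when it asserts that each \(\Gr^i_c(\cV_*)\) is coherent on \(D_i\). For that assertion one must read \(\cV|_{D_i}\) as the image of \(\cV\otimes\cO_{D_i}\to E|_{D_i}\). Your version makes this explicit, at the cost of a little more length.

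The only detail you leave implicit is that the generic rank of the local composite \(\cO^m|_{D_i}\to E|_{D_i}/F^i_c\) equals the global generic rank of \(\phi_c\). This follows from two facts. First, the generators' values span \(\cV(y)\) for \(y\notin S\), so the two ranks agree on \(D_i\setminus S\). Second, rank is lower semicontinuous and \(D_i\setminus S\) is dense in \(D_i\), so the local composite cannot have larger rank at points of \(S\).
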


\begin{proof}
Only finitely many distinct sheaves \(\Gr^i_c(\cV_*)\) occur. Each is coherent on
\(D_i\), so by semicontinuity of fibre dimension the set where its fibre dimension exceeds
its generic rank is a proper analytic subset of \(D_i\); let \(\Sigma_i\subset D_i\) be the
union of these finitely many sets, and put \(Z:=S\cup\bigcup_i\Sigma_i\). Each
\(\Sigma_i\) is a proper analytic subset of the divisor \(D_i\), hence an analytic subset
of \(X\) of complex codimension at least two, and \(S\) has codimension at least two; so
\(Z\) is a closed analytic subset of \(X\) of complex codimension at least two. Off
\(Z\) the sheaf \(\cV\) is a subbundle on which \(\varsigma\) is nowhere zero. Outside
\(\Sigma_i\) every graded sheaf \(\Gr^i_c(\cV_*)\) is locally free of its
generic rank, so the successive filtration steps along \(D_i\) are subbundles of their
generic ranks there.
\end{proof}

For a Hermitian holomorphic bundle \((E,h)\), write
\[
   F_h=\bar\partial(h^{-1}\partial h),
   \qquad
   c_1(E,h)=\frac{\mathrm{i}}{2\pi}\tr F_h
\]
for the curvature of the Chern connection and its first Chern form. For a line bundle
with holomorphic frame \(e\), this convention gives
\begin{equation}\label{eq:c1line}
   c_1(L,h)=-\frac{\mathrm{i}}{2\pi}\partial\bar\partial\log h(e,e).
\end{equation}
Given a K\"ahler form \(\omega\), set
\[
   \Phi_\omega:=\frac{\omega^{n-1}}{(n-1)!},
   \qquad
   dV_\omega:=\frac{\omega^n}{n!},
\]
and define \(\Lambda_\omega\) on \((1,1)\)-forms by
\(\alpha\wedge\Phi_\omega=(\Lambda_\omega\alpha)\,dV_\omega\).

Let \(\omega\) be a K\"ahler form on \(\Xo\), let \(h\) be a smooth Hermitian metric on
\(E|_{\Xo}\), and let \(\cV\subset E\) be saturated of rank \(k\), with determinant
morphism \(\varsigma:L\to\Lambda^kE\) and exceptional set \(Z\) as in
Lemma~\ref{lem:goodlocus}. On \(\Xo\setminus Z\) let \(\pi_\cV\) be the
\(h\)-orthogonal projection onto \(\cV\), let \(h_\cV:=\varsigma^*(\Lambda^kh)\) be the
induced metric on \(L\), and put \(\alpha_\cV:=c_1(L,h_\cV)\). The Chern--Weil identity
for a holomorphic subbundle \cite[Ch.~I, \S6]{Kobayashi} gives
\begin{equation}\label{eq:CWpointwise}
   \alpha_\cV\wedge\Phi_\omega
   =\frac1{2\pi}
    \Bigl[\tr\bigl(\pi_\cV\,\mathrm{i}\Lambda_\omega F_h\bigr)-|\bar\partial\pi_\cV|^2\Bigr]dV_\omega
   \qquad\text{on }\Xo\setminus Z .
\end{equation}
The metric \(h_\cV\) and the form \(\alpha_\cV\) are defined and smooth on the possibly
larger set \(\Xo\setminus\{\varsigma=0\}\), while \eqref{eq:CWpointwise} holds on
\(\Xo\setminus Z\). Integrated over a compact manifold, \eqref{eq:CWpointwise} together
with the next lemma is Simpson's Chern--Weil formula for the degree of a saturated
subsheaf \cite[Lemma~3.2]{Simpson}.

\begin{lemma}\label{lem:c1current}
Let \(\cV\subset E\) be saturated of rank \(k\), let \(h\) be smooth on \(E|_{\Xo}\) and
let \(q\) be a smooth metric on \(L=\det\cV\). Then \(\alpha_\cV\in L^1_{\mathrm{loc}}(\Xo)\),
\(\log(h_\cV/q)\in L^1_{\mathrm{loc}}(\Xo)\). If \([\alpha_\cV]\) denotes the current
defined by the locally integrable form \(\alpha_\cV\), then
\begin{equation}\label{eq:c1current}
   [\alpha_\cV]=c_1(L,q)
      -\frac{\mathrm{i}}{2\pi}\partial\bar\partial\log\frac{h_\cV}{q}
\end{equation}
as currents on \(\Xo\).
Moreover \(|\bar\partial\pi_\cV|^2\in L^1_{\mathrm{loc}}(\Xo)\).
\end{lemma}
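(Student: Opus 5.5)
The whole statement is local on \(\Xo\), and \(h\) is smooth there. So I will work on a small coordinate ball \(B\Subset\Xo\) on which \(E\) and \(L\) are trivial. Fix a holomorphic frame \(e\) of \(L\) and a holomorphic frame of \(E\), so that \(\Lambda^kE\) is trivialized. The determinant morphism becomes a holomorphic map \(\sigma:=\varsigma(e):B\to\Lambda^k\C^r\), not identically zero, with zero locus \(\{\varsigma=0\}\) of codimension at least two. Then
\[
h_\cV(e,e)=|\sigma|^2_{\Lambda^kh},
\qquad
\log\frac{h_\cV}{q}=\log|\sigma|^2_{\Lambda^kh}-\log q(e,e).
\]
The second term is smooth. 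Since \(\Lambda^kh\) is uniformly equivalent to the flat metric on \(B\), we have \(\log|\sigma|^2_{\Lambda^kh}=\log\sum_I|\sigma_I|^2+O(1)\), with \(\sigma_I\) the holomorphic components. The function \(\log\sum_I|\sigma_I|^2\) is plurisubharmonic and not identically \(-\infty\), hence in \(L^1_{\mathrm{loc}}\). This gives \(\log(h_\cV/q)\in L^1_{\mathrm{loc}}(\Xo)\).

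For \(\alpha_\cV\), \eqref{eq:c1line} gives on \(B\setminus\{\sigma=0\}\)
\[
\alpha_\cV=-\frac{\mathrm{i}}{2\pi}\partial\bar\partial\log|\sigma|^2_{\Lambda^kh}.
\]
Write \(\log|\sigma|^2_{\Lambda^kh}=u+\psi\), where \(u:=\log\sum_I|\sigma_I|^2\) and \(\psi\) is bounded. The plan is to show three things: \(\partial\bar\partial u\) off \(\{\sigma=0\}\) has locally integrable coefficients; its trivial extension equals the current \(\partial\bar\partial u\), because the Monge--Ampère/Lelong mass carried by the zero set is supported in codimension \(\ge2\), and a closed positive \((1,1)\)-current cannot charge such a set (support theorem / Skoda–El Mir); and the same holds for the correction \(\psi\).

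Rather than splitting the terms, I expect it cleaner to use the Chern--Weil structure directly. Off \(\{\sigma=0\}\), the curvature of \(\Lambda^kh\) restricted to the line subbundle spanned by \(\sigma\) gives
\[
\alpha_\cV=\frac{\mathrm{i}}{2\pi}\bigl(\tr(\pi_\cV F_h)-\text{second fundamental form term}\bigr),
\]
that is, \eqref{eq:CWpointwise} without contracting with \(\omega\). The first term is bounded. The second is a semi-definite form, \(\ge0\) as a \((1,1)\)-form in the appropriate sign. So \(\alpha_\cV\) equals a bounded form minus a positive form \(\gamma\).

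On the other hand, the current \(-\frac{\mathrm{i}}{2\pi}\partial\bar\partial\log|\sigma|^2_{\Lambda^kh}\) is the sum of a smooth form and a negative closed current \(T\), since \(\log|\sigma|^2_{\Lambda^kh}\) is quasi-psh. Indeed, \(\log|\sigma|^2_{\Lambda^kh}\) differs from the psh \(u\) by a bounded function, and is psh up to a smooth correction by the curvature bound on \(\Lambda^kh\), by the standard computation \(\mathrm{i}\partial\bar\partial\log|\sigma|^2\ge -C\omega_0\). A positive current has measure coefficients. Its restriction to \(B\setminus\{\sigma=0\}\) is \(\gamma+\)smooth, so \(\gamma\) has locally finite mass near \(\{\sigma=0\}\), i.e.\ \(\gamma\in L^1_{\mathrm{loc}}\). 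Hence \(\alpha_\cV\in L^1_{\mathrm{loc}}(\Xo)\).

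The difference between the current \(T\) and the trivial extension of its restriction is a closed positive current supported on \(\{\sigma=0\}\). This uses El Mir–Skoda: the trivial extension of a positive closed current of finite mass across an analytic set is closed, so the difference is closed. A closed \((1,1)\)-current supported in codimension \(\ge2\) vanishes by the support theorem. This yields \eqref{eq:c1current}.

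Finally, \(|\bar\partial\pi_\cV|^2\) is, up to a bounded factor, the trace \(\Lambda_{\omega}\) of the positive second fundamental form term on \(\Xo\setminus Z\). One checks that the second fundamental forms of \(\cV\) and of the line \(\det\cV\subset\Lambda^kE\) agree in norm, as the standard identity behind \eqref{eq:CWpointwise} shows. Since \(Z\) has measure zero, its local integrability follows from that of \(\gamma\).

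The main obstacle I expect is the careful sign and positivity bookkeeping: getting \(\gamma\ge0\) with a uniform lower bound on the quasi-psh potential, which relies on \(h\) being smooth near the points considered, and then justifying the extension-across-codimension-two step rigorously.
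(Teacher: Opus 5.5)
Your proposal is correct and follows essentially the paper's proof. Both arguments use the same steps: comparability with the psh function \(\log\sum_I|\sigma_I|^2\) for the integrability of \(\log(h_\cV/q)\); the quasi-psh lower bound \(\mathrm{i}\partial\bar\partial\log|\sigma|^2_{\Lambda^kh}\ge-C\omega_{\mathrm{euc}}\) (the paper's Cauchy--Schwarz step) to get locally finite mass; then Skoda--El Mir and the support theorem in codimension two for \eqref{eq:c1current}. The only difference is in the last claim: you write \(\alpha_\cV\) as a bounded form minus the positive second fundamental form term and read off \(|\bar\partial\pi_\cV|^2\) as its \(\omega\)-trace, whereas the paper deduces \(|\bar\partial\pi_\cV|^2\in L^1_{\mathrm{loc}}\) from the contracted identity \eqref{eq:CWpointwise} once \(\alpha_\cV\in L^1_{\mathrm{loc}}\) is known. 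Your first, abandoned plan of splitting \(u+\psi\) is not needed, and its claim about the bounded correction \(\psi\) would have required more than boundedness.
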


\begin{proof}
Both assertions are local on \(\Xo\), where \(h\) is smooth. Let \(B\Subset\Xo\) be a
coordinate ball on which \(L\) and \(\Lambda^kE\) are trivial, let \(\xi\) be a
holomorphic frame of \(L\) and put \(s:=\varsigma(\xi)\in H^0(B,\Lambda^kE)\), with zero
set \(A\) of codimension at least two. In a holomorphic frame of \(\Lambda^kE\),
\(|s|^2_{\Lambda^kh}\asymp\sum_\alpha|s^\alpha|^2\) on \(B'\Subset B\), and
\(\log\sum_\alpha|s^\alpha|^2\) is plurisubharmonic and not identically \(-\infty\), hence
locally integrable \cite[Ch.~I, Thm.~4.17]{DemaillyBook}; so
\(\log(h_\cV/q)=\log|s|^2_{\Lambda^kh}-\log|\xi|^2_q\) is in \(L^1_{\mathrm{loc}}(B)\).
On \(B\setminus A\), the Cauchy--Schwarz inequality gives
\(\mathrm{i}\partial\bar\partial\log|s|^2_{\Lambda^kh}\ge-C\omega_{\mathrm{euc}}\), so
this smooth form has locally finite mass near \(A\); its trivial extension across \(A\) is
closed by the Skoda--El Mir theorem \cite[Ch.~III, Thm.~2.3]{DemaillyBook}, and its
difference with the distributional current
\(\mathrm{i}\partial\bar\partial\log|s|^2_{\Lambda^kh}\) on \(B\) is a closed current of
order zero supported on \(A\), which vanishes by the support theorem
\cite[Ch.~III, Cor.~2.11]{DemaillyBook} because \(A\) has real codimension at least four.
By \eqref{eq:c1line}, this is \eqref{eq:c1current} on \(B\); since \(Z\cap B\) has measure
zero, \([\alpha_\cV]\) is also the current associated with the form on \(B\setminus Z\).

Finally, \eqref{eq:CWpointwise} gives, as measures on \(B\setminus Z\),
\[
   \frac{1}{2\pi}|\bar\partial\pi_\cV|^2\,dV_\omega
   =\frac{1}{2\pi}\tr\bigl(\pi_\cV\,\mathrm{i}\Lambda_\omega F_h\bigr)dV_\omega
     -\alpha_\cV\wedge\Phi_\omega .
\]
The first term on the right is bounded, because \(h\) and \(\omega\) are smooth on \(B\)
and \(|\pi_\cV|_h\le\sqrt{\rk E}\), and the second is in \(L^1_{\mathrm{loc}}(B)\). The
left-hand side is a nonnegative measure on \(B\setminus Z\) dominated by a locally finite
one, so \(|\bar\partial\pi_\cV|^2\in L^1_{\mathrm{loc}}(B)\).
\end{proof}

\subsection{K\"ahler cone metrics}\label{sec:cone}

\begin{definition}\label{def:conemetric}
A K\"ahler metric \(\omega\) on \(\Xo\) is a \emph{cone metric} with cone angle
\(2\pi\beta_i\) along \(D_i\) if
\begin{enumerate}
\item[(C1)] there are a smooth K\"ahler form \(\omega_0\) on \(X\) and a function
\(\psi\in L^\infty(X)\cap C^\infty(\Xo)\), \(\omega_0\)-plurisubharmonic on \(X\), with
\begin{equation}\label{eq:globalpotential}
   \omega=\omega_0+\mathrm{i}\partial\bar\partial\psi \quad\text{on }\Xo;
\end{equation}
\item[(C2)] in every adapted chart \eqref{eq:adap-coor} the associated Hermitian metric
\(g\) is uniformly equivalent to the product metric \(g_{\boldsymbol\beta}\) in
\eqref{eq:productcone}, where \(\boldsymbol\beta=(\beta_1,\dots,\beta_\ell)\) and
\begin{equation}\label{eq:productcone}
   g_{\boldsymbol\beta}
   :=\sum_{j=1}^{\ell}|z_j|^{2\beta_j-2}|dz_j|^2
     +\sum_{j=\ell+1}^{n}|dz_j|^2 .
\end{equation}
\end{enumerate}
It is \emph{K\"ahler--Einstein} if \(\Ric(\omega)=\lambda\omega\) on \(\Xo\) for some
\(\lambda\in\R\), where \(\Ric(\omega):=-\mathrm{i}\partial\bar\partial\log\det(g_{p\bar q})\)
for \(\omega=\mathrm{i}g_{p\bar q}\,dz^p\wedge d\bar z^q\).
\end{definition}

\begin{remark}
	The K\"ahler--Einstein metrics constructed by Guenancia--P\u{a}un \cite{GuenanciaPaun} satisfy \textup{(C1)} and \textup{(C2)}.
\end{remark}

A cone metric is a K\"ahler form on \(\Xo\) only, and its K\"ahler class is read through
\textup{(C1)}: \(\Omega:=[\omega]\) means \([\omega_0]\in H^{1,1}(X,\R)\). This does not
depend on the choice of \(\omega_0\) and \(\psi\): if
\(\omega_0+\mathrm{i}\partial\bar\partial\psi=\omega_0'+\mathrm{i}\partial\bar\partial\psi'\)
on \(\Xo\) with both potentials bounded, then on a coordinate ball \(U\) choose a smooth
function \(f\) with \(\mathrm{i}\partial\bar\partial f=\omega_0-\omega_0'\); the function
\((\psi'-\psi)-f\) is bounded and pluriharmonic on \(U\setminus D\), hence extends
pluriharmonically across \(D\) \cite[Ch.~I, Thm.~5.24]{DemaillyBook}. Therefore
\(\psi'-\psi\) extends smoothly across \(D\), and
\(\omega_0-\omega_0'=\mathrm{i}\partial\bar\partial(\psi'-\psi)\) on \(X\), so
\([\omega_0]=[\omega_0']\). The trivial extension of \(\omega\) across \(D\) is the closed
positive current \(\widetilde{\omega}:=\omega_0+\mathrm{i}\partial\bar\partial\psi\) on
\(X\), whose class is \(\Omega\): since \(\psi\) is bounded, \(\widetilde{\omega}\) has
vanishing Lelong numbers and its Bedford--Taylor products charge no pluripolar set
\cite{BedfordTaylor}, in particular no mass on \(D\).

Suppose in addition that \(\Ric(\omega)=\lambda\omega\) on \(\Xo\). Fix a smooth positive
volume form \(dV\) on \(X\), with Ricci form
\(\Ric(dV):=-\mathrm{i}\partial\bar\partial\log(dV/dV_{\mathrm{euc}})\) in local
coordinates, a smooth closed form representing \(2\pi c_1(X)\), and set on \(\Xo\)
\[
   F:=\log\frac{\omega^n}{dV}+2\sum_iw_i\log\rho_i+\lambda\psi .
\]
By \textup{(C2)} and the boundedness of \(\psi\), the function \(F\) is bounded near
\(D\). Moreover, by \eqref{eq:c1line} and \eqref{eq:globalpotential},
\[
   \mathrm{i}\partial\bar\partial F
   =\Ric(dV)-\lambda\omega_0-2\pi\sum_iw_i\,c_1\bigl(\cO_X(D_i),h_i\bigr)
   \qquad\text{on }\Xo ,
\]
whose right-hand side is smooth on \(X\). Locally subtracting a smooth potential for this
form, the remainder is bounded and pluriharmonic on the complement of \(D\), hence
extends pluriharmonically across \(D\) \cite[Ch.~I, Thm.~5.24]{DemaillyBook}. Thus \(F\)
extends smoothly to \(X\), and
\[
   \omega^n=\frac{e^{F-\lambda\psi}\,dV}{\prod_i\rho_i^{2w_i}} .
\]
In particular the bounded potential \(\psi\) lies in the conical Monge--Amp\`ere setting
of Guenancia--P\u{a}un \cite[Theorem~B]{GuenanciaPaun}. Moreover
\(\log(\omega^n/dV)=F-\lambda\psi-\sum_iw_i\log\rho_i^2\) is integrable on \(X\), so
\(\Ric(\omega)=-\mathrm{i}\partial\bar\partial\log(\omega^n/dV)+\Ric(dV)\) defines a
current on \(X\), and the Poincar\'e--Lelong formula
\(\mathrm{i}\partial\bar\partial\log\rho_i^2=2\pi[D_i]-2\pi\,c_1(\cO_X(D_i),h_i)\)
\cite[Ch.~III, (2.15)]{DemaillyBook} gives, as currents on \(X\),
\begin{equation}\label{eq:Ric-current}
   \Ric(\omega)=\lambda\widetilde{\omega}+2\pi\sum_iw_i[D_i].
\end{equation}

The degree identity of Section~\ref{sec:technical} uses the cone-metric hypothesis only
through the following estimates.

\begin{lemma}\label{lem:basecone}
Let \(\omega\) be a cone metric, with \(\omega_0\) as in \textup{(C1)}. Then:
\begin{enumerate}
\item[(B1)] \(\Vol_\omega(\Xo):=\int_{\Xo}dV_\omega<\infty\);
\item[(B2)] \(\omega\ge c_0\omega_0\) on \(\Xo\) for some \(c_0>0\); consequently every
smooth differential form on \(X\) has bounded \(\omega\)-norm on \(\Xo\).
\end{enumerate}
Moreover, in an adapted chart \eqref{eq:adap-coor},
\begin{equation}\label{eq:conevolume}
   dV_\omega\asymp
   \Bigl(\prod_{j=1}^{\ell}|z_j|^{2\beta_j-2}\Bigr)dV_{\mathrm{euc}},
\end{equation}
where \(dV_{\mathrm{euc}}\) is Euclidean volume in these coordinates, and there is
\(C>0\) with
\begin{equation}\label{eq:dlogrhoi}
   |d\log\rho_j|_\omega^2\le C|z_j|^{-2\beta_j}
   \qquad (1\le j\le\ell).
\end{equation}
\end{lemma}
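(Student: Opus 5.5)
The argument is local except for (B1), so I would prove the chart estimates \eqref{eq:conevolume} and \eqref{eq:dlogrhoi} first, using (C2), and then deduce the global statements. Fix an adapted chart \(U\), shrunk so that (C2) gives \(C^{-1}g_{\boldsymbol\beta}\le g\le Cg_{\boldsymbol\beta}\) on \(U\cap\Xo\).

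For \eqref{eq:conevolume}, note that \(g_{\boldsymbol\beta}\) is diagonal in the coordinates \(z\). Its volume form is therefore \(\prod_{j\le\ell}|z_j|^{2\beta_j-2}\,dV_{\mathrm{euc}}\). Uniform equivalence of Hermitian forms gives the same two-sided bound on determinants, with constants \(C^{\pm n}\), so \(dV_\omega\asymp\det(g_{\boldsymbol\beta})\,dV_{\mathrm{euc}}\).

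For \eqref{eq:dlogrhoi}, write \(\rho_j=|z_j|e^{u}\) with \(u\) smooth on \(U\); this holds because \(\rho_j/|z_j|\) is smooth and positive on a smaller chart, as noted in the proof of Lemma~\ref{lem:modelindependence}. Then
\[
d\log\rho_j=\tfrac12\Bigl(\frac{dz_j}{z_j}+\frac{d\bar z_j}{\bar z_j}\Bigr)+du .
\]
The dual metric of \(g_{\boldsymbol\beta}\) gives \(|dz_j|^2_{g_{\boldsymbol\beta}}=|z_j|^{2-2\beta_j}\), hence \(|dz_j/z_j|^2_{g_{\boldsymbol\beta}}=|z_j|^{-2\beta_j}\). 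Every \(dz_p\) has \(g_{\boldsymbol\beta}\)-norm at most \(|z_p|^{1-\beta_p}\le C\) on a bounded chart, so \(|du|\) is bounded. Since \(|z_j|^{-2\beta_j}\ge c>0\) on the chart, the bounded term is absorbed, and uniform equivalence transfers the estimate from \(g_{\boldsymbol\beta}\) to \(\omega\).

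For (B2), the same computation shows \(g_{\boldsymbol\beta}\ge c\,g_{\mathrm{euc}}\) on a bounded chart, because \(|z_j|^{2\beta_j-2}\ge c\) when \(\beta_j<1\). Hence \(\omega\ge c\,\omega_0\) on \(U\cap\Xo\), since \(\omega_0\) is comparable to the Euclidean metric there. Cover \(D\) by finitely many such charts. On the complement of a neighbourhood of \(D\), which is a compact subset of \(\Xo\), both \(\omega\) and \(\omega_0\) are smooth Kähler forms, so they are comparable there. Taking the minimum constant gives \(c_0\). The consequence then follows: \(\omega\ge c_0\omega_0\) means the dual norms satisfy \(|\cdot|_\omega\le C|\cdot|_{\omega_0}\) on \(1\)-forms, hence on all forms, and smooth forms have bounded \(\omega_0\)-norm on the compact manifold \(X\).

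For (B1), I would integrate \eqref{eq:conevolume} over the finitely many charts. The integral \(\int_{|z|<1}|z|^{2\beta-2}\,dV_{\mathrm{euc}}\) is finite since \(2\beta-2>-2\), and the integrand factorises in the variables \(z_1,\dots,z_\ell\). The remainder of \(\Xo\) has compact closure in \(\Xo\) with \(\omega\) smooth there, so it contributes finite volume. Alternatively, (C1) with Bedford–Taylor gives \(\Vol=\Omega^n/n!\), but the direct estimate suffices.

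I expect no real obstacle. The only delicate point is making the constants uniform: the chart must be shrunk so that \(|z_j|\) is bounded and \(\rho_j/|z_j|\) is smooth and positive up to the boundary.
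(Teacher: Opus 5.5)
Your proposal is correct and follows essentially the same route as the paper: the volume comparison from $g_{\boldsymbol\beta}$ together with integrability of $|z_j|^{2\beta_j-2}$ gives (B1), the bound $|z_j|^{2\beta_j-2}\ge c$ on shrunken adapted charts plus compactness of the complement of a neighbourhood of $D$ gives (B2), and the dual-metric computation $|dz_j/z_j|^2_{g_{\boldsymbol\beta}}=|z_j|^{-2\beta_j}$ gives \eqref{eq:dlogrhoi}. The only cosmetic difference is that you bound the smooth error term $du$ directly through $g_{\boldsymbol\beta}$ instead of invoking (B2), which changes nothing.
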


\begin{proof}
Formula \eqref{eq:conevolume} follows from \eqref{eq:productcone}, and each
factor \(r_j^{2\beta_j-1}dr_j\) is integrable because \(\beta_j>0\); with the compactness
of \(X\) this gives (B1).

For (B2), cover a neighbourhood of \(D\) by finitely many adapted charts \(U_1,\dots,U_m\) and
shrink them to \(U_a'\Subset U_a\) still covering a neighbourhood \(W\) of \(D\). On each
\(U_a'\), \eqref{eq:productcone} gives \(g\ge c_a\,g_{\mathrm{euc}}\) because
\(|z_j|^{2\beta_j-2}\ge1\) for \(|z_j|\le1\), and \(\omega_0\le C_a\,\omega_{\mathrm{euc}}\)
by smoothness; hence \(\omega\ge(c_a/C_a)\omega_0\) on \(U_a'\cap\Xo\). On the compact set
\(X\setminus W\subset\Xo\), the smallest eigenvalue of \(\omega\) with respect to
\(\omega_0\) has a positive lower bound. Taking the minimum of these finitely many bounds
gives \(\omega\ge c_0\omega_0\), which reverses on the cotangent bundle and bounds the
\(\omega\)-norm of smooth forms by their \(\omega_0\)-norm.

Finally \(d\log\rho_j=d\log|z_j|+O(1)\) in an adapted chart. The metric dual to the
\(j\)-th factor of \eqref{eq:productcone} has size \(|z_j|^{2-2\beta_j}\), so
\(|d\log|z_j||^2_\omega\le
C|z_j|^{2-2\beta_j}|z_j|^{-2}=C|z_j|^{-2\beta_j}\), and the
smooth error has bounded \(\omega\)-norm by (B2).
\end{proof}

\begin{lemma}\label{lem:cohomology}
Let \(\omega\) be a cone metric. For every smooth closed \((1,1)\)-form \(\eta\) on \(X\),
\[
   \int_{\Xo}\eta\wedge\Phi_\omega=\int_X\eta\wedge\frac{\Omega^{n-1}}{(n-1)!},
   \qquad
   \Vol_\omega(\Xo)=\int_X\frac{\Omega^n}{n!}.
\]
\end{lemma}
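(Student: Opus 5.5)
The plan is to pass from \(\Xo\) to \(X\) through the trivial extension \(\widetilde\omega=\omega_0+\mathrm{i}\partial\bar\partial\psi\), using Bedford--Taylor theory for currents with bounded potentials. Since \(\psi\in L^\infty(X)\) is \(\omega_0\)-plurisubharmonic, the products \(\widetilde\omega^k\) are well-defined closed positive currents on \(X\) and put no mass on the pluripolar set \(D\). On \(\Xo\) they agree with the smooth forms \(\omega^k\), since there \(\psi\) is smooth and Bedford--Taylor products coincide with the classical ones. Hence
\[
   \int_{\Xo}\eta\wedge\omega^{n-1}=\int_X\eta\wedge\widetilde\omega^{\,n-1},
   \qquad
   \int_{\Xo}\omega^n=\int_X\widetilde\omega^{\,n}.
\]
For the left-hand sides to make sense as absolutely convergent integrals, I will write \(\eta\) as a difference of two positive multiples of \(\omega_0\) plus a positive form, for instance \(\eta=(\eta+C\omega_0)-C\omega_0\) with \(\eta+C\omega_0\ge0\). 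Alternatively, (B2) and (B1) bound \(|\eta\wedge\Phi_\omega|\le C\,dV_\omega\).

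Next I show that these total masses are cohomological. I expand
\[
   \widetilde\omega^{\,k}=(\omega_0+\mathrm{i}\partial\bar\partial\psi)^k
\]
and show that \(\int_X\eta\wedge\omega_0^{a}\wedge(\mathrm{i}\partial\bar\partial\psi)\wedge T=0\), where \(T\) is a closed current built from Bedford--Taylor products. The cleanest route is induction: write \(\widetilde\omega^{k}=\omega_0\wedge\widetilde\omega^{k-1}+\mathrm{i}\partial\bar\partial\psi\wedge\widetilde\omega^{k-1}\). By definition, \(\mathrm{i}\partial\bar\partial\psi\wedge\widetilde\omega^{k-1}=\mathrm{i}\partial\bar\partial(\psi\,\widetilde\omega^{k-1})\), and this is a \(d\)-exact current on the compact manifold \(X\). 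Pairing it with the smooth closed form \(\eta\wedge\widetilde\omega\)-free test forms (e.g. \(\eta\wedge\omega_0^{a}\)) therefore gives zero by Stokes. Iterating,
\[
   \int_X\eta\wedge\widetilde\omega^{n-1}=\int_X\eta\wedge\omega_0^{n-1}=\int_X\eta\wedge\Omega^{n-1},
   \qquad
   \int_X\widetilde\omega^{n}=\int_X\omega_0^n .
\]
Dividing by \((n-1)!\) and \(n!\) gives both identities. The second is also the special case of the first argument with \(\eta=\omega_0\) applied one more time.

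The main obstacle is justifying that the smooth integral over \(\Xo\) equals the current pairing on \(X\), i.e. that no mass concentrates on \(D\). I will invoke the Bedford--Taylor result that the Monge--Amp\`ere-type products of bounded quasi-psh potentials do not charge pluripolar sets, as already quoted before Lemma~\ref{lem:basecone}. The mixed-product and Stokes step is standard once the currents \(\psi\,\widetilde\omega^{k-1}\) are known to have measure coefficients, which holds because \(\psi\) is bounded and Borel.

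As a fallback, I can cut off near \(D\) with \(\chi_\varepsilon(\log\log(1/\rho))\)-type functions and integrate by parts on \(\Xo\). The boundary error is then controlled by \(\int|d\chi_\varepsilon|_\omega|\psi|\,|\eta|\,\omega^{n-1}\), which tends to zero by (B1), (B2) and \eqref{eq:dlogrhoi}.
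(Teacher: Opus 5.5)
Your proof is correct and follows essentially the same route as the paper. Bedford--Taylor products of the bounded-potential current \(\widetilde\omega\) put no mass on \(D\), so the integrals over \(\Xo\) become integrals over \(X\), and the identity \(\mathrm{i}\partial\bar\partial\psi\wedge T=\mathrm{i}\partial\bar\partial(\psi T)\) together with Stokes removes the \(\psi\)-terms; your step-by-step induction is the paper's telescoping identity \(\widetilde\omega^{\,n-1}-\omega_0^{n-1}=\mathrm{i}\partial\bar\partial\psi\wedge\sum_{j}\widetilde\omega^{\,j}\wedge\omega_0^{n-2-j}\) unrolled, and the cut-off fallback is not needed.
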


\begin{proof}
The integrals converge absolutely by (B1) and (B2). Since \(\psi\) is bounded, the mixed
Bedford--Taylor products of \(\widetilde{\omega}\) and \(\omega_0\) are well defined
closed positive currents on \(X\), with
\(\mathrm{i}\partial\bar\partial\psi\wedge S:=\mathrm{i}\partial\bar\partial(\psi S)\) for
closed positive \(S\) \cite[Ch.~III, \S3]{DemaillyBook}, and they put no mass on the
pluripolar set \(D\) \cite{BedfordTaylor}; so the integrals over \(\Xo\) equal
\(\int_X\eta\wedge\widetilde{\omega}^{\,n-1}\) and \(\int_X\widetilde{\omega}^{\,n}\).
Moreover \(\widetilde{\omega}^{\,n-1}-\omega_0^{n-1}=\mathrm{i}\partial\bar\partial\psi\wedge S\)
with \(S:=\sum_{j=0}^{n-2}\widetilde{\omega}^{\,j}\wedge\omega_0^{n-2-j}\) closed and
positive, and Stokes' theorem gives
\(\int_X\eta\wedge\mathrm{i}\partial\bar\partial(\psi S)
=\int_X\psi\,\mathrm{i}\partial\bar\partial\eta\wedge S=0\). The same argument with
\(\widetilde{\omega}^{\,n}-\omega_0^{n}=\mathrm{i}\partial\bar\partial\psi\wedge
\sum_{j=0}^{n-1}\widetilde{\omega}^{\,j}\wedge\omega_0^{n-1-j}\) gives the volume identity.
\end{proof}

By Lemma~\ref{lem:cohomology}, integrals over \(\Xo\) of smooth closed forms against
\(\omega^{n-1}\), and \(\Vol_\omega(\Xo)\), are computed by \(\Omega\); this is what
justifies the notation \(\Omega=[\omega]\).

\section{The polystability criterion}\label{sec:technical}

The main result of this section is the following polystability criterion.

\begin{theorem}\label{thm:HEcriterion}
Let \(E_*\) be a locally abelian parabolic bundle on \((X,D)\) such that, along each
\(D_i\), the only possible positive weight is \(w_i\), with
\begin{equation}\label{eq:oneweight}
   \rk\Gr^i_{w_i}(E_*)\le1 .
\end{equation}
Let \(\omega\) be a cone metric with \(\Omega=[\omega]\), and let \(h\) be a Hermitian
metric on \(E|_{\Xo}\), uniformly adapted to \(E_*\), such that
\begin{equation}\label{eq:HEgeneral}
	\mathrm{i}\Lambda_\omega F_h=\mu\Id_E , \qquad \mu \in \R .
\end{equation}
Then \(E_*\) is parabolic polystable with respect to \(\Omega\), of slope \(\mu_\Omega(E_*)=\mu\Vol_\omega(\Xo)/2\pi\).
\end{theorem}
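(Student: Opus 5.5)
The plan is the standard easy direction of Kobayashi--Hitchin, with the parabolic degree entering through a Chern--Weil identity. The key analytic input is the degree formula announced in the outline as Theorem~\ref{thm:degree}. For every saturated subsheaf \(\cV\subset E\) (including \(\cV=E\)), I would aim to prove
\[
   2\pi\,\pardeg_\Omega(\cV_*)=\int_{\Xo}\Bigl[\tr\bigl(\pi_\cV\,\mathrm{i}\Lambda_\omega F_h\bigr)-|\bar\partial\pi_\cV|^2\Bigr]dV_\omega .
\]
Granting this, the argument runs as follows.

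\textbf{Slope and semistability.} With \(\cV=E\) we have \(\pi_E=\Id\). The formula gives \(2\pi\pardeg_\Omega(E_*)=\mu\,\rk E\,\Vol_\omega(\Xo)\), which is the stated slope. For a proper \(\cV\), \eqref{eq:HEgeneral} gives
\[
   2\pi\pardeg_\Omega(\cV_*)=\mu\,\rk\cV\,\Vol_\omega(\Xo)-\|\bar\partial\pi_\cV\|^2_{L^2}\le 2\pi\,\rk\cV\,\mu_\Omega(E_*),
\]
so \(E_*\) is semistable. If equality holds, then \(\bar\partial\pi_\cV=0\) a.e. on \(\Xo\setminus Z\). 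Since \(\pi_\cV\) is an \(h\)-self-adjoint projection, \(\partial\pi_\cV=(\bar\partial\pi_\cV)^*=0\) too, so \(\pi_\cV\) is parallel on \(\Xo\setminus Z\) and extends across \(Z\) by boundedness and Hartogs. Then \(E|_{\Xo}=\cV\oplus\cV^\perp\) holomorphically and \(h\)-orthogonally. Each summand inherits an HE metric with the same constant \(\mu\), and the restricted metrics remain uniformly adapted. Inducting on rank then yields the polystable decomposition, provided the splitting extends over \(D\) compatibly with the filtrations.

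\textbf{Extending the splitting.} The projection \(\pi_\cV\) is holomorphic on \(\Xo\). In an adapted frame its matrix entries \((\pi_\cV)_{\alpha\gamma}\) are bounded by \(\prod_j\rho_j^{\operatorname{wt}(e_\gamma)-\operatorname{wt}(e_\alpha)}\), because \(|\pi_\cV|_h\le\sqrt{\rk E}\) and \(h\asymp h_{\mathrm{mod}}\). Since all weights lie in \([0,1)\), each entry is \(O(\rho^{-c})\) with \(c<1\), so it extends holomorphically across \(D\) by Riemann extension. Moreover, entries with \(\operatorname{wt}(e_\alpha)>\operatorname{wt}(e_\gamma)\) vanish on \(D_j\), for the same reason as in Lemma~\ref{lem:modelindependence}. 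Hence \(\pi_\cV\) is a holomorphic endomorphism of \(E\) that preserves each \(F^j_c\), and the splitting \(E=\cV\oplus\ker\pi_\cV\) splits every filtration. Each summand is again locally abelian and satisfies \eqref{eq:oneweight}, so the induction closes.

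\textbf{The degree formula (main obstacle).} Integrate \eqref{eq:CWpointwise} over \(\Xo\), using Lemma~\ref{lem:c1current} for local integrability. The task is then to identify \(\int_{\Xo}\alpha_\cV\wedge\Phi_\omega\) with \(\pardeg_\Omega(\cV_*)\). Choose a smooth metric \(q\) on \(L\) and write \(\log(h_\cV/q)=-2\sum_i\delta_i(\cV)\log\rho_i+u\). By \eqref{eq:c1current} and Poincaré--Lelong, the \(\delta_i\) terms produce \(-\sum_i\delta_i(\cV)[D_i]\) at the level of classes; this uses Lemma~\ref{lem:cohomology} for the smooth forms \(c_1(\cO(D_i),h_i)\). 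It remains to show \(\int_{\Xo}\mathrm{i}\partial\bar\partial u\wedge\Phi_\omega=0\). For that I would cut off with \(\chi_\varepsilon(\log\rho)\) near \(D\) and \(Z\), and integrate by parts once. The boundary term is controlled by \(\int|d\chi_\varepsilon|\,|du|_\omega\,dV_\omega\), using \eqref{eq:dlogrhoi} and \eqref{eq:conevolume}: since \(|z|^{-\beta}\cdot|z|^{2\beta-2}\) is integrable against a cutoff in \(\log|z|\), a log-type cutoff gives a vanishing term. The hard part is the behaviour of \(u\) near \(D\). The hypothesis \eqref{eq:oneweight} (a single weight \(w_i\) of multiplicity \(\le1\)) should make \(u\) bounded above: \(|\varsigma|^2_{\Lambda^kh}\) is comparable to a sum of monomials \(\rho_i^{-2w_i\epsilon}\) with \(\epsilon\in\{0,1\}\), and generically on \(D_i\) the dominant one is exactly \(\delta_i(\cV)\). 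I would then use a one-sided bound together with the upper semicontinuity and psh-type structure of \(u\) modulo smooth terms, and handle the codimension-two set \(Z\) by the Skoda--El Mir/capacity argument of Lemma~\ref{lem:c1current}. If needed, I would replace the exact identity with the inequality \(\int\alpha_\cV\wedge\Phi_\omega\ge2\pi\pardeg_\Omega(\cV_*)\), which together with the \(\cV=E\) case (where \(u\) is bounded) is all that semistability and the equality case require.
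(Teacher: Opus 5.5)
\textbf{Verdict: genuine gap.} Your architecture is the paper's: a degree identity gives semistability, equality forces \(\bar\partial\pi_\cV=0\), and \(\pi_\cV\) extends across \(Z\) and then across \(D\) to an idempotent in \(P(E_*)\), after which one splits and inducts. The gap is in the step you flag as the main obstacle, which is the real content of the theorem, and the mechanism you propose there does not work.

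A single integration by parts leaves \(\int|d\chi_\varepsilon|_\omega\,|du|_\omega\,dV_\omega\), and your estimate tacitly treats \(|du|_\omega\) as bounded. But \(du\) involves first derivatives of \(h\), about which the hypotheses say nothing: only \(h\asymp h_{\mathrm{mod},U}\) and \(\Lambda_\omega F_h\) are known. Moreover \(|du|_\omega\) is unbounded even in the best case. Take \(\cV=\cO\cdot(y\,e_0+z_j\,e_+)\), with \(e_0,e_+\) of \(D_j\)-weights \(0,w_j\) and \(h=h_{\mathrm{mod},U}\). Then \(\delta_j(\cV)=0\) and \(u\approx\log(|y|^2+|z_j|^{2-2w_j})\).

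The paper instead moves both derivatives onto the cut-off, \(\int\chi_T\,\mathrm{i}\partial\bar\partial u\wedge\Phi_\omega=\int u\,\mathrm{i}\partial\bar\partial\chi_T\wedge\Phi_\omega\). That route only needs \(u\in L^1(dV_\omega)\) and the weighted bound \(\int_{\{\varepsilon<\rho_i<\varepsilon_0\}}|u|\,|d\log\rho_i|^2_\omega\,dV_\omega\le C(1+\log(\varepsilon_0/\varepsilon))\). Both require control of \(u\) from below, which you do not supply. Your upper bound is correct (it is Lemma~\ref{lem:upperdet}), but nothing makes \(u\) quasi-psh. Indeed \(\mathrm{i}\partial\bar\partial u=2\pi(\eta-\alpha_\cV)\), and \(\alpha_\cV\) is dominated only by \(\frac{1}{2\pi}\tr(\pi_\cV\,\mathrm{i}F_h\,\pi_\cV)\), where \(F_h\) itself is uncontrolled. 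The missing ingredient is Lemma~\ref{lem:lowerdet}. Using \(\rk\Gr^j_{w_j}\le1\), it produces a Pl\"ucker coefficient \(F_j=\Theta^{I_j}\) with \(\operatorname{wt}_{D_j}(e_{I_j})=\delta_j(\cV)\) and \(F_j|_{D_j}\not\equiv0\), hence \(u\ge\log|F_j|^2-C-A\sum_{m\ne j}|\log\rho_m|\). Integrability then follows from the sub-mean-value inequality on the circles \(|z_j|=t\), and from \(\log|F_j|_{D_j}|\in L^p_{\mathrm{loc}}\) against the cone measure.

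Your fallback is logically sound. The inequality \(\int_{\Xo}\alpha_\cV\wedge\Phi_\omega\ge\pardeg_\Omega(\cV_*)\), together with the case \(\cV=E\) where \(u\) is bounded, does suffice. There should be no factor \(2\pi\) here, since \(\alpha_\cV=c_1(L,h_\cV)\). The inequality can even be proved without a lower bound, but through a different structure from the one you cite: a bound on the trace of \(\mathrm{i}\partial\bar\partial u\), not quasi-plurisubharmonicity.
\begin{itemize}
\item[(a)] By \eqref{eq:CWHE}, \(\Lambda_\omega\mathrm{i}\partial\bar\partial u=2\pi\Lambda_\omega\eta-\mu k+|\bar\partial\pi_\cV|^2\ge-C\) off \(\{\varsigma=0\}\).
\item[(b)] Take a convex \(M\) with \(M\equiv0\) on \((-\infty,-1]\) and \(M(t)=t\) on \([1,\infty)\), and set \(v_A:=M(u+A)-A\), which is smooth and bounded. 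Then \(\Lambda_\omega\mathrm{i}\partial\bar\partial v_A\ge M'(u+A)\,\Lambda_\omega\mathrm{i}\partial\bar\partial u\ge-C\).
\item[(c)] Two integrations by parts against \(\chi_T\), plus monotone convergence, give \(\int_{\Xo}\Lambda_\omega\mathrm{i}\partial\bar\partial v_A\,dV_\omega=0\).
\item[(d)] Monotone convergence as \(A\to\infty\) then yields \(\int_{\Xo}|\bar\partial\pi_\cV|^2dV_\omega\le\mu k\Vol_\omega(\Xo)-2\pi\pardeg_\Omega(\cV_*)\).
\end{itemize}
Neither this argument nor the paper's appears in your proposal.

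Separately, you assert without proof that \(\im\pi\) and \(\ker\pi\) inherit locally abelian structures. At crossing points this needs an argument, such as the conjugation by \(Q=\pi\Pi+(\Id-\pi)(\Id-\Pi)\) in Lemma~\ref{lem:directsummands}. You rely on it both for uniform adaptedness of the restricted metrics and for the induction.
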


The proof uses the next theorem, which is the main analytic result of the paper.

\begin{theorem}\label{thm:degree}
Under the hypotheses of Theorem~\ref{thm:HEcriterion}, for every saturated subsheaf \(\cV\subset E\) of
rank \(k\ge1\) the integral \(\int_{\Xo}|\bar\partial\pi_\cV|^2dV_\omega\) is finite and
\begin{equation}\label{eq:degreeformula}
   \pardeg_\Omega(\cV_*)
   =\frac{\mu k}{2\pi}\Vol_\omega(\Xo)
    -\frac1{2\pi}\int_{\Xo}|\bar\partial\pi_\cV|^2\,dV_\omega .
\end{equation}
\end{theorem}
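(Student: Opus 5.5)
The plan is to integrate the pointwise Chern--Weil identity \eqref{eq:CWpointwise} over \(\Xo\) and to identify the integral of \(\alpha_\cV\wedge\Phi_\omega\) with \(\pardeg_\Omega(\cV_*)\).

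\textbf{Step 1: the integrated identity.} By \eqref{eq:HEgeneral} we have \(\tr(\pi_\cV\,\mathrm{i}\Lambda_\omega F_h)=\mu k\). Hence on \(\Xo\setminus Z\),
\[
\alpha_\cV\wedge\Phi_\omega=\tfrac{1}{2\pi}\bigl(\mu k-|\bar\partial\pi_\cV|^2\bigr)dV_\omega .
\]
So \(\alpha_\cV\wedge\Phi_\omega\) is bounded above by an integrable function, by (B1). Its integral over \(\Xo\) is therefore well defined in \([-\infty,\infty)\). Once that integral is shown to equal the finite number \(\pardeg_\Omega(\cV_*)\), monotone convergence gives both the finiteness of \(\int_{\Xo}|\bar\partial\pi_\cV|^2dV_\omega\) and \eqref{eq:degreeformula}. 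In practice I will work with \(\int_{\Xo}\chi_\varepsilon\,\alpha_\cV\wedge\Phi_\omega\) for cutoffs \(\chi_\varepsilon\uparrow 1\) supported in \(\Xo\), and pass to the limit.

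\textbf{Step 2: splitting off the parabolic weights.} Fix a smooth metric \(q\) on \(L=\det\cV\) and put
\[
\varphi:=\log(h_\cV/q)+2\sum_i\delta_i(\cV)\log\rho_i .
\]
On \(\Xo\), Poincar\'e--Lelong gives \(\mathrm{i}\partial\bar\partial\log\rho_i^2=-2\pi c_1(\cO_X(D_i),h_i)\). Combined with Lemma~\ref{lem:c1current}, this yields, as currents on \(\Xo\),
\[
[\alpha_\cV]=c_1(L,q)-\sum_i\delta_i(\cV)\,c_1(\cO_X(D_i),h_i)-\tfrac{\mathrm{i}}{2\pi}\partial\bar\partial\varphi .
\]
The first two terms are smooth closed forms on \(X\). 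By Lemma~\ref{lem:cohomology}, their integrals against \(\Phi_\omega\) give exactly \(\int_X\parc_1(\cV_*)\wedge\Omega^{n-1}/(n-1)!\). It therefore remains to show \(\lim_\varepsilon\int\chi_\varepsilon\,\mathrm{i}\partial\bar\partial\varphi\wedge\Phi_\omega=0\).

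\textbf{Step 3: local control of \(\varphi\).} Because \(h\) is uniformly adapted, in an adapted chart \(\varphi\) differs by a bounded function from
\[
\log\sum_I\Bigl(\prod_j\rho_j^{-2(\sum_{\alpha\in I}\mathrm{wt}_{D_j}(e_\alpha))+2\delta_j}\Bigr)|s^I|^2 ,
\]
where \(s=\varsigma(\xi)=\sum_Is^Ie_I\) is written in the adapted frame of \(\Lambda^kE\). Hypothesis \eqref{eq:oneweight} is used here. Along \(D_j\) the only positive weight is \(w_j\), with a one-dimensional graded piece, so \(\delta_j(\cV)\in\{0,w_j\}\).
\begin{itemize}
\item If \(\delta_j=0\), then \(\cV|_{D_j}\subset F^j_0\) generically. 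Every \(s^I\) with \(I\) containing the weight-\(w_j\) frame vector vanishes on \(D_j\), hence is divisible by \(z_j\). Each exponent then becomes \(\ge -2w_j+2>0\) after absorbing \(|z_j|^2\).
\item If \(\delta_j=w_j\), every exponent is already \(\ge0\).
\end{itemize}
In both cases \(\varphi\le C\) locally. Also \(\varphi\) is bounded below by \(\log\) of a sum of squared moduli of holomorphic functions minus \(C\sum_j|\log\rho_j|\). So \(\varphi\in L^p_{\mathrm{loc}}(X)\) for all \(p<\infty\), with at most logarithmic singularities along \(D\) and along \(\{\varsigma=0\}\).

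\textbf{Step 4: the cutoff argument (main obstacle).} Since \(\omega\) is closed on \(\Xo\), Stokes gives
\[
\int\chi_\varepsilon\,\mathrm{i}\partial\bar\partial\varphi\wedge\Phi_\omega=\int\varphi\,\mathrm{i}\partial\bar\partial\chi_\varepsilon\wedge\Phi_\omega .
\]
I would take \(\chi_\varepsilon=\prod_i\eta(\rho_i^{\beta_i}/\varepsilon)\) with \(\eta\) a fixed cutoff; the codimension-two set \(\{\varsigma=0\}\cap\Xo\) causes no boundary term because Lemma~\ref{lem:c1current} holds across it. In cone-adapted coordinates \(\zeta_j=z_j^{\beta_j}\) approximately, the metric is quasi-Euclidean, so \(|\mathrm{i}\partial\bar\partial\chi_\varepsilon|_\omega\lesssim\varepsilon^{-2}\), using \eqref{eq:dlogrhoi} for the first-derivative terms. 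These bounds hold on a shell of \(\omega\)-volume \(O(\varepsilon^2)\), by \eqref{eq:conevolume}. The integral is thus bounded by the average of \(|\varphi|\) over the shell. Because \(|\varphi|\) has only logarithmic growth, this average is \(O(|\log\varepsilon|)\), which does not yet tend to zero. I expect this loss of a logarithm to be the real difficulty.

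I would try first to use a \(\log\log\)-type cutoff \(\chi_\varepsilon=\eta\bigl(\varepsilon\log(-\log\rho^{\beta})\bigr)\). Its Hessian against \(\omega\) gains a factor \(\varepsilon/|\log\rho|\), so the integral becomes \(O(\varepsilon)\). Alternatively, one can exploit the one-sided bound \(\varphi\le C\) together with the sign of \(\alpha_\cV\wedge\Phi_\omega\) from Step 1, via Fatou's lemma, to get \(\le\) in one direction. The reverse inequality would then come from \(\varphi\) being quasi-psh up to the smooth forms above, so that \(\mathrm{i}\partial\bar\partial\varphi\) has no negative mass concentrating on \(D\).
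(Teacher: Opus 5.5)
Your Steps 1--2 and the upper bound in Step 3 match the paper's set-up exactly: your \(\varphi\) is the renormalized determinant \(u=\log\Psi\), and \(\varphi\le C\) is Lemma~\ref{lem:upperdet}. The gap is in the lower bound, and therefore in Step 4. Your bound \(\varphi\ge\log\sum_I|s^I|^2-C\sum_j|\log\rho_j|-C\) still allows \(\varphi\) to behave like \(c\log\rho_j^2\) with \(c>0\) generically along \(D_j\). With only this information no cutoff can succeed. The function \(c\log\rho_j^2\) is bounded above, has logarithmic growth, and satisfies \(\mathrm{i}\partial\bar\partial(c\log\rho_j^2)=-2\pi c\,c_1(\cO_X(D_j),h_j)\) on \(\Xo\). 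Hence for every exhaustion \(\chi\uparrow1\) by functions in \(C^\infty_c(\Xo)\) one has \(\int_{\Xo}c\log\rho_j^2\,\mathrm{i}\partial\bar\partial\chi\wedge\Phi_\omega\to-2\pi c\,[D_j]\cdot\Omega^{n-1}/(n-1)!\ne0\).

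In particular, your \(\log\log\) cutoff does not give \(O(\varepsilon)\). Put \(\ell=-\log\rho_j^{\beta_j}\); then \(|d\ell|_\omega^2\,dV_\omega\lesssim d\ell\,d\vartheta\,d\nu_j\). The Hessian of \(\eta(\varepsilon\log\ell)\) has size \(\varepsilon\ell^{-2}|d\ell|^2_\omega\), but it is supported on \(e^{1/\varepsilon}\le\ell\le e^{2/\varepsilon}\). So slice means of \(|\varphi|\) of size \(\ell\) contribute \(\varepsilon\int_{e^{1/\varepsilon}}^{e^{2/\varepsilon}}\ell^{-1}\,d\ell=1\).

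Your fallback argument has the same defect. The bounds \(\varphi\le C\) and \(\mathrm{i}\partial\bar\partial\varphi\wedge\Phi_\omega\ge-C\,dV_\omega\) can only exclude \emph{negative} mass of \(\mathrm{i}\partial\bar\partial\varphi\wedge\Phi_\omega\) on \(D\). Note that only this trace is bounded below: the hypotheses control \(\Lambda_\omega F_h\), not \(F_h\), so \(\varphi\) is not known to be quasi-psh. At best these bounds give the single inequality \(\pardeg_\Omega(\cV_*)\le\frac{\mu k}{2\pi}\Vol_\omega(\Xo)-\frac1{2\pi}\int_{\Xo}|\bar\partial\pi_\cV|^2dV_\omega\). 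The reverse inequality is the absence of \emph{positive} mass on \(D\), which is precisely the \(c\log\rho_j^2\) behaviour, and nothing in your proposal rules it out.

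The missing idea is the component-wise lower bound of Lemma~\ref{lem:lowerdet}, which matches the parabolic weights from below. At a generic point of \(D_j\), take a local frame of \(\cV\) whose first \(k-\epsilon_j\) vectors span \(\cV|_{D_j}\cap F^j_0(E_*)\). Using \(\rk\Gr^j_{w_j}\le1\), one finds a Pl\"ucker coefficient \(F_j=\Theta^{I_j}\) with \(\operatorname{wt}_{D_j}(e_{I_j})=\delta_j(\cV)\) and \(F_j|_{D_j}\not\equiv0\). This gives \(u\ge\log|F_j|^2-C-A\sum_{m\ne j}|\log\rho_m|\), with no \(|\log\rho_j|\) term.

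Two further facts then control the slices: the sub-mean-value inequality for \(\log|F_j|^2\) on circles in \(z_j\), and the integrability of \(\log|F_j(0,\cdot)|\) against the transverse cone measure. Together they bound the slice integrals of \(|u|\) uniformly in \(|z_j|\), which is \eqref{eq:circlebound}. With that estimate, the paper's logarithmic cutoff \(\gamma\bigl((s_D-T_0)/(T-T_0)\bigr)\) contributes only \(O\bigl((1+T)/T^2\bigr)\), and the identity follows. Your \(\log\log\) cutoff would also work once this estimate is available.
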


Related degree comparisons have been proved under different hypotheses. Biquard relates
metric growth, parabolic extensions and Chern--Weil degrees in complex dimension two
\cite[Theorem~1.1 and p.~314]{BiquardParabolic}, and treats logarithmic Higgs bundles over
a Poincar\'e type metric \cite[Proposition~7.2 and Lemma~8.5]{BiquardHiggs}. Li constructs
a reference metric along normal crossing divisors and proves the comparison for that
metric and for compatible metrics
\cite[Definition~5.1 and Propositions~5.5, 5.7]{Li}. Mochizuki develops the theory of
tame harmonic bundles \cite{Mochizuki}, and Jiang--Li prove a Kobayashi--Hitchin
correspondence for saturated reflexive parabolic sheaves on compact K\"ahler manifolds
\cite{JiangLi}.

These results do not directly apply to the metric induced by a K\"ahler--Einstein cone
metric. Li's compatibility condition requires, besides mutual boundedness with a
reference metric, \(L^2\)-control of the first derivative of the endomorphism relating
the two metrics. Jiang--Li's compatibility condition includes the current inequality
which is at issue here, while their admissibility requires \(F_h\in L^2\). In contrast,
the hypotheses of Theorem~\ref{thm:degree} control only the local model of \(h\) and the
contraction \(\mathrm{i}\Lambda_\omega F_h\). The proof replaces derivative and
full-curvature control by direct estimates for a renormalized determinant: the
determinant metric of \(\cV\) is divided by its prescribed divisorial growth, the
logarithm of the quotient is bounded above and, one component at a time, below, and is
shown to be integrable against the cone volume form; a logarithmic cut-off then removes
any boundary contribution along \(D\). 

\subsection{The renormalized determinant}\label{ss:renormalized}
Throughout Sections~\ref{ss:renormalized}--\ref{ss:proofdegree}, \(E_*\), \(\omega\)
and \(h\) are as in Theorem~\ref{thm:degree}, and \(\cV\subset E\) is a fixed saturated
subsheaf of rank \(k\ge1\). Let \(Z\) be supplied by Lemma~\ref{lem:goodlocus}, let
\(L=\det\cV\), let \(\varsigma:L\to\Lambda^kE\) be the determinant morphism, choose an
auxiliary smooth Hermitian metric \(q\) on \(L\), and set
\[
   \eta:=c_1(L,q)-\sum_{i=1}^{N}\delta_i(\cV)\,c_1(\cO_X(D_i),h_i),
\]
so that \([\eta]=\parc_1(\cV_*)\) by \eqref{eq:parc1V}.

By \eqref{eq:oneweight}, the induced filtration of \(\cV_*\) along \(D_i\) has no
weight other than \(0\) and \(w_i\), and its positive-weight graded piece has rank
\(\epsilon_i\in\{0,1\}\); hence \(\delta_i(\cV)=\epsilon_iw_i\).
We now remove from \(h_\cV\) its prescribed parabolic growth. On \(\Xo\setminus Z\) define
the renormalized determinant
\[
   \Psi:=\Bigl(\prod_{i=1}^{N}\rho_i^{2\delta_i(\cV)}\Bigr)\frac{h_\cV}{q},
   \qquad u:=\log\Psi ,
\]
so that \(u\) is the logarithm of the determinant metric with the expected divisorial
singularities factored out. Replacing \(q\) by another smooth metric changes \(u\) by a
smooth function.
By \eqref{eq:c1line} and Lemma~\ref{lem:c1current},
\begin{equation}\label{eq:ddcu}
   \mathrm{i}\partial\bar\partial u
   =2\pi\bigl(\eta-[\alpha_\cV]\bigr)
   \qquad\text{as currents on }\Xo .
\end{equation}

\subsection{Determinant estimates}
In an adapted chart \eqref{eq:adap-coor} with adapted frame \(e_1,\dots,e_r\), write
\(e_I=e_{\alpha_1}\wedge\dots\wedge e_{\alpha_k}\) for
\(I=\{\alpha_1<\dots<\alpha_k\}\) and
\(\operatorname{wt}_{D_j}(e_I):=\sum_{\alpha\in I}\operatorname{wt}_{D_j}(e_\alpha)\).
Since exterior powers preserve inequalities between positive Hermitian forms, taking the
\(k\)-th exterior power of \eqref{eq:uniformlyadapted} and of the diagonal model
\eqref{eq:modelmetric} gives, for \(\Theta=\sum_I\Theta^Ie_I\),
\begin{equation}\label{eq:exteriormodel}
   |\Theta|_{\Lambda^kh}^2\asymp
   \sum_I\Bigl(\prod_{j=1}^{\ell}
      \rho_j^{-2\operatorname{wt}_{D_j}(e_I)}\Bigr)|\Theta^I|^2 .
\end{equation}

Fix an adapted chart \(U\), shrink it so that \(L\) is trivialized on a neighbourhood of
\(\overline U\) by a holomorphic section \(\xi\), and put
\(\Theta:=\varsigma(\xi)=\sum_I\Theta^Ie_I\). Since \(|\xi|_q\asymp1\),
\eqref{eq:exteriormodel} gives
\begin{equation}\label{eq:PsiLocal}
   \Psi\asymp\sum_I\Bigl(\prod_{j=1}^{\ell}
      \rho_j^{2(\delta_j(\cV)-\operatorname{wt}_{D_j}(e_I))}\Bigr)
      |\Theta^I|^2
   \qquad\text{on }U\cap(\Xo\setminus Z).
\end{equation}

\begin{lemma}\label{lem:upperdet}
The function \(u\) is bounded above on \(U\cap(\Xo\setminus Z)\).
\end{lemma}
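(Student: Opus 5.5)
By \eqref{eq:PsiLocal}, it is enough to show that every term
\(\prod_j\rho_j^{2(\delta_j(\cV)-\operatorname{wt}_{D_j}(e_I))}|\Theta^I|^2\) is bounded on \(U\cap(\Xo\setminus Z)\), after shrinking \(U\) slightly. Each \(\Theta^I\) is holomorphic on a neighbourhood of \(\overline U\), hence bounded. The exponent of \(\rho_j\) can be negative: this happens when \(\operatorname{wt}_{D_j}(e_I)>\delta_j(\cV)\). So the task is to show that in that case \(\Theta^I\) vanishes to sufficiently high order along \(D_j\).

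By \eqref{eq:oneweight}, every frame weight \(\operatorname{wt}_{D_j}(e_\alpha)\) is either \(0\) or \(w_j\), and at most one \(e_\alpha\) has weight \(w_j\). Hence \(\operatorname{wt}_{D_j}(e_I)\in\{0,w_j\}\), and \(\delta_j(\cV)=\epsilon_jw_j\) with \(\epsilon_j\in\{0,1\}\). The exponent of \(\rho_j\) is negative exactly when \(\epsilon_j=0\) and \(I\) contains the index \(\alpha_j\) of the weight-\(w_j\) frame vector. The exponent is then \(-2w_j>-2\). Since \(\rho_j\asymp|z_j|\), it suffices to show that \(z_j\mid\Theta^I\) for all such \(I\). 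Once this holds, \(|\Theta^I|^2\le C\prod_{j\in J_I}|z_j|^2\), where \(J_I\) is the set of such \(j\). Divisibility by several distinct coordinates \(z_j\) gives divisibility by their product, because the \(z_j\) are coprime in the UFD \(\cO_U\). The term is then bounded by \(C\prod_{j\in J_I}|z_j|^{2-2w_j}\le C\). Terms with nonnegative exponents are trivially bounded, since \(\rho_j<1\).

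The key step, and the main obstacle, is the divisibility. I would argue as follows. If \(\epsilon_j=0\), then at generic points of \(D_j\cap U\) the fibre \(\cV|_{D_j}\) lies in \(F^j_0(E_*)\), since it has no positive-weight graded piece. By \eqref{eq:adaptedframe}, \(F^j_0\) is spanned by the \(e_\alpha|_{D_j}\) with \(\alpha\ne\alpha_j\), which means the \(e_{\alpha_j}\)-coefficient of every local section of \(\cV\) vanishes on \(D_j\). Off \(Z\), \(\Theta\) is locally a wedge \(v_1\wedge\dots\wedge v_k\) of a frame of \(\cV\), times a nonvanishing function. Expanding the wedge, every \(\Theta^I\) with \(\alpha_j\in I\) is a sum of products each containing one \(e_{\alpha_j}\)-coefficient of some \(v_m\). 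Therefore \(\Theta^I|_{D_j}=0\) on \(D_j\setminus Z\). Since \(Z\cap D_j\) is a proper analytic subset of \(D_j\), continuity gives \(\Theta^I|_{D_j\cap U}\equiv0\), i.e. \(z_j\mid\Theta^I\).

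One must check that, where \(\cV\) is a subbundle near a generic point of \(D_j\), the restriction \(\cV|_{D_j}\) really is the fibrewise image used to define \(\cV_*\). It is, by Lemma~\ref{lem:goodlocus}: off \(Z\), \(\cV\) is a subbundle and the induced filtration steps are subbundles of their generic ranks. Combining the divisibility with the bound above shows that \(\Psi\) is bounded on the shrunken chart, so \(u=\log\Psi\) is bounded above there.
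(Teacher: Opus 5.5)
Your proof is correct and is essentially the paper's argument. When \(\epsilon_j=0\), the inclusion \(\cV|_{D_j}\subset F^j_0(E_*)\) at general points of \(D_j\) forces \(z_j\mid\Theta^I\) whenever \(\operatorname{wt}_{D_j}(e_I)=w_j\); the factor \(|z_j|^2\rho_j^{-2w_j}\asymp\rho_j^{2(1-w_j)}\) is then bounded, and all other exponents are nonnegative.

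You also treat divisibility by several \(z_j\) at once explicitly, which fills in a step the paper leaves implicit in ``applying this argument to every component meeting \(U\)''. The coprimality argument should be made in the local rings \(\cO_{X,x}\), which are UFDs, or by dividing successively on the polydisc, rather than in the ring of holomorphic functions on \(U\).
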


\begin{proof}
For every \(j\le\ell\), the weight of \(e_I\) is either \(0\) or \(w_j\). If
\(\epsilon_j=1\), then
\(\delta_j(\cV)-\operatorname{wt}_{D_j}(e_I)\ge0\), so the \(j\)-th factor in
\eqref{eq:PsiLocal} is bounded. Suppose that \(\epsilon_j=0\). At a general point of
\(D_j\), the image of \(\Lambda^k\cV\) lies in \(\Lambda^kF^j_0(E_*)\). Therefore
\(\Theta^I|_{D_j}=0\) whenever \(\operatorname{wt}_{D_j}(e_I)=w_j\), and hence, by
\eqref{eq:adap-coor}, \(z_j\mid\Theta^I\) on \(U\). The corresponding factor is bounded
because \(|z_j|^2\rho_j^{-2w_j}\asymp\rho_j^{2(1-w_j)}\).
Applying this argument to every component meeting \(U\) shows that every summand in
\eqref{eq:PsiLocal} is bounded.
\end{proof}


The lower bound on \(u\) is established one component at a time.

\begin{lemma}\label{lem:lowerdet}
After shrinking \(U\), for every \(j\le\ell\) there are a holomorphic function \(F_j\) on a
neighbourhood of \(\overline U\) with \(F_j|_{D_j\cap U}\not\equiv0\), and constants
\(A,C>0\), such that
\begin{equation}\label{eq:lowerdet}
   u\ge\log|F_j|^2-C-A
      \sum_{\substack{1\le m\le\ell\\m\neq j}}\bigl|\log\rho_m\bigr|
   \qquad\text{on }U\cap(\Xo\setminus Z).
\end{equation}
\end{lemma}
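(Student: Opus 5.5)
The plan is to bound \(\Psi\) from below by a single, well-chosen summand of \eqref{eq:PsiLocal}. On \(U\cap(\Xo\setminus Z)\), \eqref{eq:PsiLocal} gives, for every multi-index \(I\),
\[
   u\ge\log|\Theta^I|^2-C+\sum_{m=1}^{\ell}2\bigl(\delta_m(\cV)-\operatorname{wt}_{D_m}(e_I)\bigr)\log\rho_m .
\]
Since \(0<\rho_m<1\), we have \(\log\rho_m=-|\log\rho_m|\). Each exponent \(\delta_m(\cV)-\operatorname{wt}_{D_m}(e_I)\) lies in \([-w_m,w_m]\subset(-1,1)\). So every factor with \(m\neq j\) contributes at least \(-2|\log\rho_m|\), which gives the claim with \(A=2\). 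It therefore suffices to choose, for the fixed \(j\), an index \(I\) satisfying two conditions:
\[
   \operatorname{wt}_{D_j}(e_I)\le\delta_j(\cV)
   \qquad\text{and}\qquad
   \Theta^I|_{D_j\cap U}\not\equiv0 .
\]
For such an \(I\), the \(j\)-th factor \(\rho_j^{2(\delta_j-\operatorname{wt}_{D_j}(e_I))}\) has nonnegative exponent. I will actually arrange equality of the two weights, so that this factor is identically \(1\). Then put \(F_j:=\Theta^I\), which is holomorphic on a neighbourhood of \(\overline U\) because \(\xi\) and the frame are defined there.

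The choice of \(I\) uses \eqref{eq:oneweight}: along \(D_j\), at most one frame vector, say \(e_{\alpha_0}\), has weight \(w_j\), and all others have weight \(0\). So \(\operatorname{wt}_{D_j}(e_I)\) equals \(w_j\) if \(\alpha_0\in I\) and \(0\) otherwise. Accordingly \(\Lambda^kE|_{D_j}\) splits as \(\Lambda^kF^j_0\oplus\bigl(e_{\alpha_0}\wedge\Lambda^{k-1}F^j_0\bigr)\), spanned by the \(e_I\) with \(\alpha_0\notin I\) and \(\alpha_0\in I\) respectively. I shrink \(U\) so that \(D_j\cap U\) is a connected polydisc slice. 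Being open in the irreducible \(D_j\), it then meets the dense set of general points where \(\cV\) is a subbundle and the induced filtration has its generic ranks (Lemma~\ref{lem:goodlocus}).

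Case \(\epsilon_j=0\). Here \(\delta_j(\cV)=0\) and \(\cV|_{D_j}\subset F^j_0\) generically, so \(\Theta\) restricts into \(\Lambda^kF^j_0\) on \(D_j\). The section \(\Theta\) vanishes only on the codimension-two set \(\{\varsigma=0\}\), so \(\Theta|_{D_j\cap U}\not\equiv0\). Hence some \(\Theta^I\) with \(\alpha_0\notin I\) (weight \(0=\delta_j\)) is not identically zero on \(D_j\cap U\). This case also covers components on which \(E_*\) has no positive weight.

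Case \(\epsilon_j=1\). Here \(\delta_j(\cV)=w_j\), and at a general point \(\cV\cap F^j_0\) has rank \(k-1\), so \(\cV|_{D_j}\not\subset F^j_0\). Consequently \(\Lambda^k\cV|_{D_j}\) is not contained in \(\Lambda^kF^j_0\), and \(\Theta\) has a nonzero component in \(e_{\alpha_0}\wedge\Lambda^{k-1}F^j_0\) at a general point. So some \(\Theta^I\) with \(\alpha_0\in I\) (weight \(w_j=\delta_j\)) is not identically zero on \(D_j\cap U\).

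In both cases the \(j\)-th exponent vanishes, and \eqref{eq:lowerdet} follows. The constant \(C\) absorbs the \(\asymp\) constant of \eqref{eq:PsiLocal}. The step needing the most care is the case \(\epsilon_j=1\): translating the generic rank statement about \(\cV\cap F^j_0\) into nonvanishing of a specific Plücker coordinate on \(D_j\). This relies on the adapted frame splitting \(E|_{D_j}=F^j_0\oplus\langle e_{\alpha_0}\rangle\), and on the choice of \(U\) ensuring that the generic behaviour on the irreducible \(D_j\) is visible inside \(D_j\cap U\).
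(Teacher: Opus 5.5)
Your proof is correct and follows the paper's route: keep the single Plücker summand $I$ of \eqref{eq:PsiLocal} with $\operatorname{wt}_{D_j}(e_I)=\delta_j(\cV)$ and $\Theta^I$ nonvanishing at a point of $(D_j\cap U)\setminus Z$, then bound the remaining exponents uniformly. The only difference is cosmetic. The paper finds $I$ through explicit minors of a frame of $\cV$ adapted to $\cV|_{D_j}\cap F^j_0$, using a block-triangular determinant when $\epsilon_j=1$. You instead use the splitting $\Lambda^kE|_{D_j}=\Lambda^kF^j_0\oplus\bigl(e_{\alpha_0}\wedge\Lambda^{k-1}F^j_0\bigr)$, together with the fact that a nonzero decomposable $k$-vector lies in $\Lambda^kF^j_0$ exactly when its span lies in $F^j_0$.
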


\begin{proof}
Choose \(x\in(D_j\cap U)\setminus Z\). By
Lemma~\ref{lem:goodlocus}, near \(x\) the sheaf \(\cV\) is a subbundle of \(E\) and
\(\cV|_{D_j}\cap F^j_0(E_*)\) is a subbundle of \(\cV|_{D_j}\) of rank \(k-\epsilon_j\).
Choose a local frame \(v_1,\dots,v_k\) of \(\cV\) near \(x\) whose first
\(k-\epsilon_j\) members restrict on \(D_j\) to a frame of this subbundle, and write
\(v_\gamma=\sum_\alpha M_{\alpha\gamma}e_\alpha\) with \(M\) an \(r\times k\) matrix of
holomorphic functions. By \eqref{eq:adaptedframe}, \(F^j_0(E_*)\) is spanned on \(D_j\)
by the \(e_\alpha\) of weight zero along \(D_j\); hence \(M_{\alpha\gamma}(x)=0\) whenever
\(\gamma\le k-\epsilon_j\) and \(\operatorname{wt}_{D_j}(e_\alpha)=w_j\).

If \(\epsilon_j=0\), the \(k\) columns of \(M(x)\) are linearly independent and supported
on the zero-weight rows, so some \(k\times k\) minor of \(M(x)\) formed by zero-weight rows
is nonzero; let \(I_j\) be its set of rows. If \(\epsilon_j=1\), the first \(k-1\) columns
of \(M(x)\) are linearly independent and supported on the zero-weight rows, so some
\((k-1)\times(k-1)\) minor formed by a set \(I'\) of zero-weight rows and the first
\(k-1\) columns is nonzero; moreover, the entry of \(M(x)\) in the unique row \(\alpha_+\)
of weight \(w_j\) and the last column is nonzero, because \(v_k(x)\notin F^j_0(E_*)_x\).
Put \(I_j:=I'\cup\{\alpha_+\}\); the \(k\times k\) minor of \(M(x)\) with rows \(I_j\) is
block triangular with nonzero diagonal blocks, hence nonzero. In either case
\[
   \det M_{I_j}(x)\ne0,
   \qquad
   \operatorname{wt}_{D_j}(e_{I_j})=\epsilon_jw_j=\delta_j(\cV),
\]
where \(M_{I_j}\) denotes the \(k\times k\) submatrix of \(M\) with rows \(I_j\).

Near \(x\), the frame \(\xi\) of \(L\) equals \(f\,v_1\wedge\dots\wedge v_k\) with \(f\)
holomorphic and nowhere zero, so \(\Theta=f\sum_I\det M_I\,e_I\) and
\(F_j:=\Theta^{I_j}\) satisfies \(F_j(x)\ne0\); in particular
\(F_j|_{D_j\cap U}\not\equiv0\). Thus \(F_j\) is a Pl\"ucker coefficient which is
generically nonzero along \(D_j\) and whose \(D_j\)-weight realizes the induced parabolic
weight \(\delta_j(\cV)\). Keeping only the summand
\(I=I_j\) in \eqref{eq:PsiLocal},
\[
   \Psi\ge c\,|F_j|^2\prod_{\substack{1\le m\le\ell\\m\neq j}}
      \rho_m^{2(\delta_m(\cV)-\operatorname{wt}_{D_m}(e_{I_j}))},
\]
the factor for \(m=j\) being absent because
\(\operatorname{wt}_{D_j}(e_{I_j})=\delta_j(\cV)\). All the exponents
are bounded because the weights lie in \([0,1)\), and \eqref{eq:lowerdet} follows on taking
logarithms.
\end{proof}

\subsection{Weighted integrability}
\begin{lemma}\label{lem:weightedL1}
The function \(u\) is integrable on \(\Xo\) with respect to \(dV_\omega\). Moreover, for
every \(i\) there are \(\varepsilon_0,C>0\) with
\begin{equation}\label{eq:uL1log}
   \int_{\{\varepsilon<\rho_i<\varepsilon_0\}}|u|\,|d\log\rho_i|_\omega^2\,dV_\omega
   \le C\Bigl(1+\log\frac{\varepsilon_0}{\varepsilon}\Bigr)
\end{equation}
for all sufficiently small \(\varepsilon>0\).
\end{lemma}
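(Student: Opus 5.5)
Away from \(D\) the statement is purely local on \(\Xo\), so we reduce to finitely many adapted charts. On a compact subset of \(\Xo\), \(u=\log(h_\cV/q)+\) smooth, and \(\log(h_\cV/q)\in L^1_{\mathrm{loc}}(\Xo)\) by Lemma~\ref{lem:c1current}, while \(dV_\omega\) is smooth there. Cover \(D\) by finitely many shrunken adapted charts \(U\) as in Lemmas~\ref{lem:upperdet} and \ref{lem:lowerdet}. It then suffices to prove, on each such \(U\), that \(\int_{U}|u|\,dV_\omega<\infty\) and that \eqref{eq:uL1log} holds with the integral restricted to \(U\). The set \(\{\rho_i<\varepsilon_0\}\) is covered by the charts meeting \(D_i\) when \(\varepsilon_0\) is small; outside these charts \(|d\log\rho_i|_\omega\) is bounded and \(u\) is integrable.

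On \(U\), Lemma~\ref{lem:upperdet} gives \(u\le C\), so \(|u|\le 2C+(-u)\). The task is therefore an upper bound on \(u^-\), and here we use Lemma~\ref{lem:lowerdet}. For a fixed \(j\le\ell\),
\[
u^-\le C+|\log|F_j|^2|+A\sum_{m\ne j}|\log\rho_m| .
\]
By \eqref{eq:conevolume}, \(dV_\omega\asymp\prod_m|z_m|^{2\beta_m-2}dV_{\mathrm{euc}}\). Each \(|\log|z_m||\) is integrable against this weight, since \(\int_0 r^{2\beta-1}|\log r|\,dr<\infty\). The term \(\log|F_j|^2\) for a holomorphic \(F_j\not\equiv0\) also needs integrability. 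We would argue via Weierstrass preparation or a Fubini/slicing argument: in the \(z_j\)-direction, \(|F_j|\) restricted to \(D_j\) is not identically zero. Alternatively we use that \(\log|F|\) is integrable against \(|g|^{-2+2\beta}\) measures. This can be obtained from Hölder's inequality, since \(\log|F_j|\in L^p_{\mathrm{loc}}\) for all \(p<\infty\) (it is psh) and \(\prod|z_m|^{2\beta_m-2}\in L^{s}_{\mathrm{loc}}\) for some \(s>1\). With \(1/p+1/s=1\), this gives \(\int_U|u|\,dV_\omega<\infty\).

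For \eqref{eq:uL1log}, take \(j=i\) (the local index of \(D_i\)), so the lower bound from Lemma~\ref{lem:lowerdet} involves no \(\log\rho_i\) term. By \eqref{eq:dlogrhoi}, \(|d\log\rho_j|^2_\omega dV_\omega\le C|z_j|^{-2}\prod_{m\ne j}|z_m|^{2\beta_m-2}dV_{\mathrm{euc}}\). Using \(\rho_j\asymp|z_j|\), the region \(\varepsilon<\rho_j<\varepsilon_0\) sits in an annulus \(c\varepsilon<|z_j|<C\varepsilon_0\). The terms \(C\) and \(|\log\rho_m|\) for \(m\ne j\) are independent of \(z_j\), so Fubini yields a factor \(\int_{c\varepsilon}^{C\varepsilon_0}dr/r\asymp 1+\log(\varepsilon_0/\varepsilon)\), times a finite integral in the other variables. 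The main obstacle is the term \(|\log|F_j|^2|\), because \(|z_j|^{-2}\) is not integrable. The plan is to show that
\[
\sup_{z_j\ \text{small}}\int|\log|F_j(z_j,z')||\prod_{m\ne j}|z_m|^{2\beta_m-2}\,dV(z')<\infty .
\]
Since \(F_j(0,\cdot)\not\equiv0\), a uniform \(L^p\) bound on \(\log|F_j(z_j,\cdot)|\) over slices should follow from standard uniformity for psh functions in families (e.g.\ Hörmander-type uniform integrability of \(e^{-\log|F|}\)-type quantities, or Weierstrass preparation in a transverse variable, which makes \(F_j\) a polynomial in one variable \(z'_1\) with bounded coefficients, together with the elementary bound \(\int|\log|P||<C\) for monic polynomials with bounded roots). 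Combined with Hölder in \(z'\), this bounds the slice integral uniformly, and integrating \(dr/r\) gives \eqref{eq:uL1log}.
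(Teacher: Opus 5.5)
Your proposal is correct. Its architecture matches the paper's:
\begin{enumerate}
\item localize to adapted charts;
\item bound $u^+$ by Lemma~\ref{lem:upperdet};
\item control $u^-$ by Lemma~\ref{lem:lowerdet}, taking $j=i$ so that no $\log\rho_i$ term appears;
\item use H\"older against the cone density;
\item prove a slice bound uniform in $|z_j|$, which turns $|d\log\rho_j|^2_\omega\,dV_\omega$ into $\int dr/r$ and hence $1+\log(\varepsilon_0/\varepsilon)$.
\end{enumerate}
The difference lies in how you get uniformity in $z_j$ for the $\log|F_j|^2$ term, and in how you get the plain integrability.

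\textbf{Uniformity in $z_j$.} The paper integrates first over the circle $|z_j|=t$. It uses the sub-mean-value inequality for the subharmonic function $z_j\mapsto\log|F_j(z_j,y)|^2$, so the circle average of $-\log|F_j|^2$ is bounded by $-\log|F_j(0,y)|^2$. Only the single restriction $F_j|_{D_j}$ then needs an $L^p$ bound, which comes from an unparametrized Weierstrass argument. The resulting circle estimate \eqref{eq:circlebound} then gives both assertions at once; plain integrability follows by integrating $t^{2\beta_j-1}\,dt$.

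You instead aim for the stronger slice-wise statement that $\log|F_j(z_j,\cdot)|$ is bounded in $L^p$ uniformly for small $z_j$. Your Weierstrass route does deliver this, but it has to be carried out with $z_j$ as a parameter:
\begin{itemize}
\item at each zero $y_0$ of $F_j(0,\cdot)$, choose a direction in which $F_j(0,\cdot)$ does not vanish identically;
\item prepare $F_j$ as a unit times a monic polynomial in that variable, with $(z_j,y')$ as parameters, so that all roots stay in a fixed disc for all small $z_j$;
\item use compactness of the chart's closure to get a uniform range of $z_j$.
\end{itemize}
As written, ``should follow'' and the appeal to ``uniformity for psh functions in families'' are not yet an argument; the parametrized preparation, or the paper's sub-mean-value trick, is what makes this step rigorous. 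The trade-off is that your route gives a pointwise-in-$z_j$ bound and avoids subharmonicity, at the cost of a parametrized preparation and covering argument. The paper's route needs information about $F_j$ only on $D_j$.

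\textbf{Plain integrability.} Here your argument is slightly more direct than the paper's. You use $\log|F_j|\in L^p_{\mathrm{loc}}$ on the full chart together with $\prod_m|z_m|^{2\beta_m-2}\in L^s_{\mathrm{loc}}$ for some $s>1$, and apply H\"older.
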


\begin{proof}
Both statements are local near \(D\). On relatively compact subsets of \(\Xo\), the
function \(u\) is locally integrable by Lemma~\ref{lem:c1current}, while
\(|d\log\rho_i|_\omega\) is bounded; hence the required estimates hold there. Work on an
adapted chart \(U'\Subset U\) as in
Lemmas~\ref{lem:upperdet} and~\ref{lem:lowerdet}, and fix \(j\le\ell\). Write
\(z_j=te^{\mathrm{i}\vartheta}\), let \(y=(z_m)_{m\neq j}\) denote the remaining coordinates and
let \(d\nu_j\) be the measure
\(\bigl(\prod_{m\le\ell,\,m\neq j}|z_m|^{2\beta_m-2}\bigr)
dV_{\mathrm{euc}}(y)\), the cone measure in the remaining variables.
For some \(t_0>0\), we claim
\begin{equation}\label{eq:circlebound}
   \sup_{0<t<t_0}\ \int\!\!\int_0^{2\pi}
   \bigl|u\bigl(te^{\mathrm{i}\vartheta},y\bigr)\bigr|\,d\vartheta\,d\nu_j(y)<\infty .
\end{equation}
Writing \(u^+:=\max\{u,0\}\) and \(u^-:=\max\{-u,0\}\), one has \(u^+\le C\) by
Lemma~\ref{lem:upperdet}, and Lemma~\ref{lem:lowerdet} gives
\begin{equation}\label{eq:uminusbound}
   u^-\le C+A\sum_{\substack{1\le m\le\ell\\m\neq j}}
      |\log\rho_m|-\log|F_j|^2 .
\end{equation}
Here \(C\) has been enlarged so that the right-hand side is nonnegative, which is
possible because \(|F_j|\) is bounded on \(U'\). The function \(F_j(0,\cdot)=F_j|_{D_j}\) is holomorphic and
not identically zero, so \(Y:=\{y:F_j(0,y)=0\}\) is a proper analytic subset; for
\(y\notin Y\) the function \(z_j\mapsto\log|F_j(z_j,y)|^2\) is subharmonic and not
identically \(-\infty\), whence
\[
   \frac1{2\pi}\int_0^{2\pi}\log\bigl|F_j(te^{\mathrm{i}\vartheta},y)\bigr|^2d\vartheta
   \ \ge\ \log|F_j(0,y)|^2 .
\]
Moreover \(\log|F_j(0,\cdot)|\in L^p_{\mathrm{loc}}\) for every \(p<\infty\). Indeed, by
the Weierstrass preparation theorem, near any zero and after a linear change of
coordinates, \(F_j|_{D_j}\) is a nonvanishing factor times a monic polynomial
\(P(\zeta,y')\). After shrinking the neighbourhood, all roots of \(P(\cdot,y')\) lie in a
fixed disc. Factoring this polynomial for each fixed \(y'\), the estimate
\(\int_0^1|\log r|^p r\,dr<\infty\), followed by Fubini's theorem, gives the assertion.
The density of \(d\nu_j\), on the other hand, lies in
\(L^{p'}_{\mathrm{loc}}\) for some \(p'>1\), since
\(p'(2\beta_m-2)>-2\) for
\(p'<\min_m(1-\beta_m)^{-1}\) and every \(\beta_m>0\); H\"older's
inequality makes \(\log|F_j(0,\cdot)|^2\) integrable against \(d\nu_j\). The same holds for
each \(|\log\rho_m|\), \(1\le m\le\ell\), \(m\neq j\). Integrating
\eqref{eq:uminusbound} in \(\vartheta\) and then in \(y\) proves \eqref{eq:circlebound}.

By \eqref{eq:conevolume}, \(dV_\omega\asymp t^{2\beta_j-1}dt\,d\vartheta\,d\nu_j\), so
\eqref{eq:circlebound} gives
\(\int_{U'}|u|\,dV_\omega\le C\int_0^{t_0}t^{2\beta_j-1}dt<\infty\). By
\eqref{eq:dlogrhoi},
\(|d\log\rho_j|^2_\omega dV_\omega\le C\,t^{-1}dt\,d\vartheta\,d\nu_j\),
and \eqref{eq:uL1log} follows from \eqref{eq:circlebound} by integrating
\(t^{-1}dt\) over a comparable annulus. Here \(\rho_j\asymp t\), so
\(\{\varepsilon<\rho_j<\varepsilon_0\}\) is contained in an annulus whose two radii are
fixed multiples of \(\varepsilon\) and \(\varepsilon_0\), respectively.
\end{proof}

\subsection{The logarithmic cut-off}
Set \(s_D:=-\sum_{i=1}^N\log\rho_i\), so that \(s_D\to+\infty\) at \(D\) and, on
\(\Xo\), the form \(\mathrm{i}\partial\bar\partial s_D\) is the restriction of a
smooth form on \(X\).

\begin{lemma}\label{lem:cutoff}
There are \(\chi_T\in C_c^\infty(\Xo)\), \(0\le\chi_T\le1\), with \(\chi_T\uparrow1\)
pointwise and
\[
   \lim_{T\to\infty}\int_{\Xo}\chi_T\,
      \mathrm{i}\partial\bar\partial u\wedge\Phi_\omega=0 .
\]
\end{lemma}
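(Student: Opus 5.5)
We will take the standard logarithmic cut-off adapted to \(s_D\). Fix \(\chi\in C^\infty(\R)\) with \(\chi=1\) on \((-\infty,1]\), \(\chi=0\) on \([2,\infty)\), \(0\le\chi\le1\), \(\chi'\le0\), and put
\[
   \chi_T:=\chi\Bigl(\frac{\log s_D}{\log T}\Bigr)
\]
for \(T\) large (we may assume \(s_D\ge1\) near \(D\) after rescaling the \(h_i\)). Then \(\chi_T\in C_c^\infty(\Xo)\), \(\chi_T\uparrow1\), and \(\chi_T\) is supported where \(s_D\le T^2\), with \(d\chi_T\) supported where \(T\le s_D\le T^2\). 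Since \(\chi_T\) has compact support in \(\Xo\), integration by parts against the current \(\mathrm{i}\partial\bar\partial u\) is legitimate: \(\Phi_\omega\) is smooth and closed on \(\Xo\) and \(u\in L^1_{\mathrm{loc}}(\Xo)\) by Lemma~\ref{lem:c1current}. This gives
\[
   \int_{\Xo}\chi_T\,\mathrm{i}\partial\bar\partial u\wedge\Phi_\omega
   =\int_{\Xo}u\,\mathrm{i}\partial\bar\partial\chi_T\wedge\Phi_\omega .
\]
So it suffices to show that the right-hand side tends to zero.

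Next we expand \(\mathrm{i}\partial\bar\partial\chi_T\). Writing \(\sigma:=\log s_D\), we have
\[
   \mathrm{i}\partial\bar\partial\chi_T=\frac{\chi'}{\log T}\,\mathrm{i}\partial\bar\partial\sigma+\frac{\chi''}{(\log T)^2}\,\mathrm{i}\partial\sigma\wedge\bar\partial\sigma,
   \qquad
   \mathrm{i}\partial\bar\partial\sigma=\frac{\mathrm{i}\partial\bar\partial s_D}{s_D}-\frac{\mathrm{i}\partial s_D\wedge\bar\partial s_D}{s_D^2}.
\]
Pairing with \(\Phi_\omega\) turns each term into a trace against \(\omega\). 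We estimate each trace using Lemma~\ref{lem:basecone}:
\begin{itemize}
\item \(\mathrm{i}\partial\bar\partial s_D\) is smooth on \(X\), so by (B2) its trace is bounded, and the first term is \(O\bigl(1/(s_D\log T)\bigr)\).
\item \(|\partial s_D|^2_\omega\le C\sum_i|d\log\rho_i|^2_\omega\), so the remaining terms are bounded by
\[
   \frac{C}{\log T}\,\frac{\sum_i|d\log\rho_i|^2_\omega}{s_D^2}.
\]
\end{itemize}
Everything is supported in \(\{T\le s_D\le T^2\}\). The first contribution is therefore at most \(C(T\log T)^{-1}\int|u|\,dV_\omega\to0\) by the integrability statement of Lemma~\ref{lem:weightedL1}.

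The main obstacle is the gradient term. On the support, \(s_D\ge T\) forces \(\max_i|\log\rho_i|\gtrsim T/N\), but an individual \(\rho_i\) may stay bounded away from zero, so we must work componentwise. We bound
\[
   \frac{|d\log\rho_i|^2_\omega}{s_D^2}\le\frac{|d\log\rho_i|^2_\omega}{(\log\rho_i)^2}
\]
where \(\rho_i\) is small. Where \(\rho_i\ge\varepsilon_0\), \(|d\log\rho_i|_\omega\) is bounded, and that part is again \(O\bigl(1/(T^2\log T)\bigr)\int|u|\,dV_\omega\). For the part with \(\rho_i<\varepsilon_0\) we apply \eqref{eq:uL1log} dyadically in \(|\log\rho_i|\). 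On the shell \(\{e^{-2^{m+1}}<\rho_i<e^{-2^m}\}\), Lemma~\ref{lem:weightedL1} (differencing at two values of \(\varepsilon\), or just using the bound with \(\varepsilon=e^{-2^{m+1}}\)) gives a weighted mass \(\le C2^{m+1}\), while the factor \((\log\rho_i)^{-2}\) is \(\le 2^{-2m}\). Summing over all \(m\) gives a finite bound independent of \(T\), and the prefactor \(1/\log T\) kills it.

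If the region where \(\rho_i\) is not small but \(s_D\) is large causes trouble, we will instead use \(s_D\ge\max_i|\log\rho_i|\) directly, as above, which avoids needing any lower bound on \(\rho_i\) itself. Combining the three contributions gives \(\int\chi_T\,\mathrm{i}\partial\bar\partial u\wedge\Phi_\omega=O(1/\log T)\to0\).
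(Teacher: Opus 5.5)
Your proof is correct, but it uses a different cut-off from the paper's. The paper's cut-off is linear in \(s_D\), namely \(\chi_T=\gamma\bigl((s_D-T_0)/(T-T_0)\bigr)\), so the two terms of \(\mathrm{i}\partial\bar\partial\chi_T\) carry the prefactors \(1/R_T\) and \(1/R_T^2\). Since \(\rho_i>e^{-T}\) on \(\supp d\chi_T\), a single application of \eqref{eq:uL1log} with \(\varepsilon=e^{-T}\) bounds the gradient term by \(C(1+T)/R_T^2\), and the whole integral is \(O(1/T)\).

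Your doubly logarithmic cut-off \(\chi(\log s_D/\log T)\) leaves only a factor \(1/\log T\) in front of the \(\mathrm{i}\partial s_D\wedge\bar\partial s_D\) terms. That is why you need the extra weight \(s_D^{-2}\le(\log\rho_i)^{-2}\) and the dyadic summation. The summation does go through: shell \(m\) contributes \(O(2^{m}\cdot2^{-2m})\). It yields the \(T\)-independent bound \(\int_{\{\rho_i<\varepsilon_0\}}|u|\,|d\log\rho_i|^2_\omega(\log\rho_i)^{-2}\,dV_\omega<\infty\), and hence \(O(1/\log T)\) overall. So your route costs an extra step and gives a slower rate. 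The linear growth in \eqref{eq:uL1log} already makes the paper's simpler cut-off sufficient.

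Two small points. First, no rescaling of \(h_i\) is needed: \(s_D>0\) on \(\Xo\), and \(\chi_T\equiv1\) wherever \(s_D\le T\). Second, ``differencing'' two upper bounds from \eqref{eq:uL1log} gives nothing. Your alternative is the right one: apply it with \(\varepsilon=e^{-2^{m+1}}\) on the whole range \(\{\varepsilon<\rho_i<\varepsilon_0\}\). The leftover range between \(\varepsilon_0\) and the first full shell has \(\rho_i\) bounded below, so it belongs to the bounded-gradient region.
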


\begin{proof}
Fix \(T_0>0\) and a nonincreasing \(\gamma\in C^\infty(\R,[0,1])\) with \(\gamma\equiv1\)
on \((-\infty,0]\) and \(\gamma\equiv0\) on \([1,\infty)\). For \(T>T_0\) put
\(R_T:=T-T_0\) and \(\chi_T:=\gamma\bigl((s_D-T_0)/R_T\bigr)\); since \(X\) is compact and
\(s_D\to\infty\) at every component of \(D\), the support of \(\chi_T\) is compact in
\(\Xo\). Because \(u\in L^1_{\mathrm{loc}}(\Xo)\),
\(\mathrm{i}\partial\bar\partial u\) is a current on \(\Xo\) and
\(\Phi_\omega\) is a smooth closed form there, so
\[
   \int\chi_T\,\mathrm{i}\partial\bar\partial u\wedge\Phi_\omega
   =\int u\,\mathrm{i}\partial\bar\partial\chi_T\wedge\Phi_\omega .
\]
Moreover
\[
   \mathrm{i}\partial\bar\partial\chi_T
   =\frac{\gamma'}{R_T}\,\mathrm{i}\partial\bar\partial s_D
     +\frac{\gamma''}{R_T^2}\,\mathrm{i}\partial s_D\wedge\bar\partial s_D .
\]
The form \(\mathrm{i}\partial\bar\partial s_D\) has bounded \(\omega\)-norm by (B2),
so, by Lemma~\ref{lem:weightedL1}, the first contribution is bounded by
\[
   \frac{C}{R_T}\int_{\Xo}|u|\,dV_\omega=O(R_T^{-1}).
\]
For the second,
\(|ds_D|_\omega^2\le N\sum_i|d\log\rho_i|_\omega^2\). On \(\supp d\chi_T\) one has
\(s_D<T\), hence \(\rho_i>e^{-T}\) for every \(i\). Splitting each region
\(\{\rho_i<\varepsilon_0\}\) off from its complement and applying \eqref{eq:uL1log} with
\(\varepsilon=e^{-T}\), together with \(|d\log\rho_i|_\omega\le C\) on
\(\{\rho_i\ge\varepsilon_0\}\) and Lemma~\ref{lem:weightedL1},
\[
   \int_{\supp d\chi_T}|u|\,|ds_D|^2_\omega\,dV_\omega\le C(1+T).
\]
The second contribution is therefore
\(O\bigl((1+T)/R_T^2\bigr)=O(R_T^{-1})\).
\end{proof}

\subsection{Proof of the degree identity}\label{ss:proofdegree}
\begin{proof}[Proof of Theorem~\ref{thm:degree}]
By \eqref{eq:HEgeneral}, \(|\Lambda_\omega F_h|_h=|\mu|\sqrt{\rk E}\) is constant, so it is
integrable by (B1). Write \(\pi=\pi_\cV\). Substituting \eqref{eq:HEgeneral} into
\eqref{eq:CWpointwise},
\begin{equation}\label{eq:CWHE}
   \alpha_\cV\wedge\Phi_\omega
   =\frac1{2\pi}\bigl(\mu k-|\bar\partial\pi|^2\bigr)dV_\omega
   \qquad\text{on }\Xo\setminus Z,
\end{equation}
and \(|\bar\partial\pi|^2\in L^1_{\mathrm{loc}}(\Xo)\) by Lemma~\ref{lem:c1current}.
Combining \eqref{eq:ddcu} and \eqref{eq:CWHE} with the cut-off of
Lemma~\ref{lem:cutoff},
\begin{equation}\label{eq:threeTerms}
   \int_{\Xo}\chi_T\,\mathrm{i}\partial\bar\partial u\wedge\Phi_\omega
   =2\pi\int_{\Xo}\chi_T\,\eta\wedge\Phi_\omega
     -\mu k\int_{\Xo}\chi_T\,dV_\omega
     +\int_{\Xo}\chi_T|\bar\partial\pi|^2\,dV_\omega .
\end{equation}
Now let \(T\to\infty\). The left-hand side vanishes by Lemma~\ref{lem:cutoff}. On the
right, (B2), (B1) and dominated convergence apply to the first two terms, which converge
to \(2\pi\int_{\Xo}\eta\wedge\Phi_\omega\) and to \(\mu k\Vol_\omega(\Xo)\); the first
of these limits equals \(2\pi\pardeg_\Omega(\cV_*)\) by Lemma~\ref{lem:cohomology}. In the third term the
integrand is nonnegative and \(\chi_T\uparrow1\), so monotone convergence makes it
increase to \(\int_{\Xo}|\bar\partial\pi|^2dV_\omega\), a priori a value in
\([0,\infty]\). Since the other three terms of \eqref{eq:threeTerms} have finite limits,
we must have \(\int_{\Xo}|\bar\partial\pi|^2dV_\omega<\infty\), and
\eqref{eq:degreeformula} follows.
\end{proof}

\subsection{Proof of the polystability criterion}\label{sec:HE}
When equality holds in \eqref{eq:degreeformula}, the orthogonal projection \(\pi_\cV\) is a
holomorphic idempotent on \(\Xo\setminus Z\). To turn it into a splitting of \(E_*\) we
need endomorphisms compatible with the parabolic structure.

\begin{definition}\label{def:parend}
\(P(E_*)\subset\End(E)\) is the subsheaf of endomorphisms \(A\) with
\(A\bigl(F^i_c(E_*)\bigr)\subset F^i_c(E_*)\) for all \(i\) and \(c\).
\end{definition}

\begin{lemma}\label{lem:parend}
Let \(E_*\) be a locally abelian parabolic bundle and let \(A\in\End(E)\). In an adapted
chart \eqref{eq:adap-coor} with adapted frame, write
\(A(e_\alpha)=\sum_\beta A_{\beta\alpha}e_\beta\). The following are equivalent:
\begin{enumerate}
\item[(i)] \(A\in P(E_*)\);
\item[(ii)] \(z_j\mid A_{\beta\alpha}\) whenever
\(\operatorname{wt}_{D_j}(e_\beta)>
\operatorname{wt}_{D_j}(e_\alpha)\), for every adapted frame and
\(1\le j\le\ell\);
\item[(iii)] \(A(E_{\mathbf c})\subset E_{\mathbf c}\) for every \(\mathbf c\in[0,1)^N\).
\end{enumerate}
In particular \(P(E_*)\) is a sheaf of \(\cO_X\)-algebras. Moreover, if \(A\in P(E_*)\)
and \(\det A\) vanishes nowhere, then \(A^{-1}\in P(E_*)\).
\end{lemma}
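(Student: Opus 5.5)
The plan is to prove the equivalences locally in one adapted chart and then deduce the algebra and inverse statements from (iii), since that condition is visibly preserved under composition and sums.

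\emph{(i)\(\Rightarrow\)(ii).} Fix an adapted frame and \(j\le\ell\), and put \(c:=\operatorname{wt}_{D_j}(e_\alpha)\). By \eqref{eq:frameweight}, \(e_\alpha|_{D_j}\in F^j_c(E_*)\), so (i) gives \(A(e_\alpha)|_{D_j}\in F^j_c(E_*)\). By \eqref{eq:adaptedframe}, \(F^j_c\) is spanned by the \(e_\beta|_{D_j}\) with weight \(\le c\). Since the frame restricted to \(D_j\) is a frame of \(E|_{D_j}\), the coefficients \(A_{\beta\alpha}|_{D_j}\) must vanish for every \(\beta\) with \(\operatorname{wt}_{D_j}(e_\beta)>c\). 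Hence \(z_j\mid A_{\beta\alpha}\).

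\emph{(ii)\(\Rightarrow\)(iii).} Use \eqref{eq:constituents}: \(E_{\mathbf c}\) has local basis \(m_\alpha(\mathbf c)e_\alpha\). Then
\[
A\bigl(m_\alpha(\mathbf c)e_\alpha\bigr)=\sum_\beta m_\alpha(\mathbf c)A_{\beta\alpha}e_\beta ,
\]
and it suffices to show \(m_\beta(\mathbf c)\mid m_\alpha(\mathbf c)A_{\beta\alpha}\). Take \(z_j\) dividing \(m_\beta(\mathbf c)\), so \(c_j<\operatorname{wt}_{D_j}(e_\beta)\). There are two cases. If \(c_j<\operatorname{wt}_{D_j}(e_\alpha)\), then \(z_j\mid m_\alpha(\mathbf c)\). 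Otherwise \(\operatorname{wt}_{D_j}(e_\alpha)\le c_j<\operatorname{wt}_{D_j}(e_\beta)\), and (ii) gives \(z_j\mid A_{\beta\alpha}\). Since the \(z_j\) are distinct coprime primes, the product divides. Outside the chart \(E_{\mathbf c}\) coincides with \(E\) away from \(D\), so this local check suffices.

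\emph{(iii)\(\Rightarrow\)(i).} Fix \(i\) and \(c\). Take \(\mathbf c\) with \(c_i=c\) and all other entries close to \(1\), so that \(F^m_{c_m}=E|_{D_m}\) for \(m\ne i\). Then \(E_{\mathbf c}=\ker(E\to (E|_{D_i})/F^i_c)\), and the image of \(E_{\mathbf c}\to E|_{D_i}\) is exactly \(F^i_c\); this surjectivity onto \(F^i_c\) is visible from \eqref{eq:constituents}. Given a local section \(v\) of \(F^i_c\), lift it to \(\tilde v\in E_{\mathbf c}\). Then \(A\tilde v\in E_{\mathbf c}\), so \(A(v)=(A\tilde v)|_{D_i}\in F^i_c\).

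\emph{Algebra structure.} By (iii), \(P(E_*)\) is closed under sums, products and multiplication by functions. It also contains \(\Id\), so it is a sheaf of \(\cO_X\)-algebras.

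\emph{Inverse.} This is the step needing a small argument. Suppose \(\det A\) vanishes nowhere. Then \(A\) restricts to an injective endomorphism of each finite-dimensional fibre of \(F^i_c\) at each point of \(D_i\), hence a bijective one. So \(A^{-1}(F^i_c)=F^i_c\), which is (i) for \(A^{-1}\).

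Alternatively, the inverse can be handled via Cayley--Hamilton: \(A^{-1}\) is a polynomial in \(A\) with coefficients in \(\cO_X\) (up to the unit \(\det A\)), and this combines with the algebra property.

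The only mildly delicate point is the surjectivity \(E_{\mathbf c}\to F^i_c\) in (iii)\(\Rightarrow\)(i), where local abelianness is used through \eqref{eq:constituents}. Everything else is bookkeeping.
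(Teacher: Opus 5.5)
Your proof is correct. The steps (i)\(\Rightarrow\)(ii) and (ii)\(\Rightarrow\)(iii) are the same as the paper's. You differ in how you close the cycle and in how you handle the inverse.

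\textbf{Closing the cycle.} The paper proves (iii)\(\Rightarrow\)(ii) by choosing \(c_j=\operatorname{wt}_{D_j}(e_\alpha)\) and the other entries above all weights. Then \(m_\alpha=1\) and \(m_\beta=z_j\), so (iii) forces \(z_j\mid A_{\beta\alpha}\), and (i) follows from (ii). You go (iii)\(\Rightarrow\)(i) directly, by lifting local sections of \(F^i_c\) to \(E_{\mathbf c}\). This works, and once the other entries of \(\mathbf c\) exceed all weights it needs no adapted frame at all: any local extension of a section of \(F^i_c\) already lies in \(E_{\mathbf c}=\ker\bigl(E\to E|_{D_i}/F^i_c\bigr)\).

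\textbf{The inverse.} The paper stays with (iii). In the basis \(m_\alpha e_\alpha\), the matrix of \(A|_{E_{\mathbf c}}\) has determinant \(\det A\), so \(A\) is an automorphism of every \(E_{\mathbf c}\), and it then applies (iii)\(\Rightarrow\)(i). Both of your arguments work directly with (i) and avoid the constituent sheaves, which is slightly more elementary:
\begin{enumerate}
\item[(a)] An injective endomorphism of the finite-dimensional fibre \(F^i_c(x)\) is bijective, and \(F^i_c\) is a subbundle.
\item[(b)] Cayley--Hamilton writes \(A^{-1}=(\det A)^{-1}p(A)\) with \(p\) having holomorphic coefficients, which lies in the unital \(\cO_X\)-algebra \(P(E_*)\).
\end{enumerate}
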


\begin{proof}
(i)\(\Leftrightarrow\)(ii). By \eqref{eq:adaptedframe}, \(A\) preserves
\(F^j_c\) for all \(c\) if and only if \(A_{\beta\alpha}|_{D_j}=0\) whenever
\(\operatorname{wt}_{D_j}(e_\beta)>c\ge
\operatorname{wt}_{D_j}(e_\alpha)\) for some \(c\). Taking
\(c=\operatorname{wt}_{D_j}(e_\alpha)\) gives (ii).

(ii)\(\Rightarrow\)(iii). With \(m_\alpha=m_\alpha(\mathbf c)\) as in
\eqref{eq:constituents}, we must show \(m_\beta\mid m_\alpha A_{\beta\alpha}\). Let
\(j\le\ell\) satisfy
\(c_j<\operatorname{wt}_{D_j}(e_\beta)\). If
\(c_j<\operatorname{wt}_{D_j}(e_\alpha)\), then \(z_j\mid m_\alpha\);
otherwise
\(\operatorname{wt}_{D_j}(e_\beta)>c_j\ge
\operatorname{wt}_{D_j}(e_\alpha)\), and \(z_j\mid A_{\beta\alpha}\).

(iii)\(\Rightarrow\)(ii). Fix \(j\le\ell\) and a pair \((\beta,\alpha)\) with
\(\operatorname{wt}_{D_j}(e_\beta)>
\operatorname{wt}_{D_j}(e_\alpha)\). Choose \(\mathbf c\) with
\(c_j=\operatorname{wt}_{D_j}(e_\alpha)\) and \(c_{j'}\) larger than all weights
for \(1\le j'\le\ell\), \(j'\neq j\). Then \(m_\alpha=1\) and
\(m_\beta=z_j\), so (iii) forces \(z_j\mid A_{\beta\alpha}\).

Finally, let \(A\in P(E_*)\) with \(\det A\) nowhere zero. In the basis
\(\{m_\alpha e_\alpha\}\) of \(E_{\mathbf c}\) the endomorphism \(A|_{E_{\mathbf c}}\) has
matrix \((m_\alpha m_\beta^{-1}A_{\beta\alpha})\), of determinant \(\det A\); hence
\(A(E_{\mathbf c})=E_{\mathbf c}\) and \(A^{-1}(E_{\mathbf c})=E_{\mathbf c}\) for every
\(\mathbf c\). By (iii)\(\Rightarrow\)(i), \(A^{-1}\in P(E_*)\).
\end{proof}

\begin{lemma}\label{lem:projectionextension}
Let \(E_*\) be a locally abelian parabolic bundle and let \(h\) be uniformly adapted to
\(E_*\).
Let \(\pi\in H^0(\Xo,\End E)\) satisfy \(\pi^2=\pi=\pi^{*h}\). Then \(\pi\) extends
uniquely to a holomorphic endomorphism of \(E\) on \(X\), and the extension lies in
\(P(E_*)\).
\end{lemma}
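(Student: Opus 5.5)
The plan is to work locally in an adapted chart \(U\) with adapted frame \(e_1,\dots,e_r\), bound the matrix coefficients of \(\pi\) using only \(\pi^2=\pi=\pi^{*h}\) and \(h\asymp h_{\mathrm{mod},U}\), and then extend them by the Riemann extension theorem. Write \(\pi(e_\alpha)=\sum_\beta\pi_{\beta\alpha}e_\beta\) with \(\pi_{\beta\alpha}\) holomorphic on \(U\cap\Xo\). Since \(\pi\) is an \(h\)-orthogonal projection, its operator norm satisfies \(|\pi|_{h,\mathrm{op}}\le1\), and this is the only analytic input. Uniqueness of the extension is immediate, because \(\Xo\) is dense.

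First, for each \(\alpha\), \(|\pi(e_\alpha)|_h\le|e_\alpha|_h\). Using \(h\asymp h_{\mathrm{mod},U}\) together with the diagonal form \eqref{eq:modelmetric}, this gives, for every \(\beta\),
\[
   |\pi_{\beta\alpha}|\prod_{j=1}^{\ell}\rho_j^{-\operatorname{wt}_{D_j}(e_\beta)}
   \le C\prod_{j=1}^{\ell}\rho_j^{-\operatorname{wt}_{D_j}(e_\alpha)},
\]
that is,
\[
   |\pi_{\beta\alpha}|\le C\prod_{j=1}^{\ell}\rho_j^{\operatorname{wt}_{D_j}(e_\beta)-\operatorname{wt}_{D_j}(e_\alpha)}
   \le C\prod_{j=1}^{\ell}\rho_j^{-\operatorname{wt}_{D_j}(e_\alpha)}.
\]
All weights lie in \([0,1)\), so \(|\pi_{\beta\alpha}|=O\bigl(\prod_j|z_j|^{-a}\bigr)\) for some \(a<1\). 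In particular \(z_1\cdots z_\ell\,\pi_{\beta\alpha}\) is bounded and holomorphic on \(U\cap\Xo\), hence extends holomorphically to \(U\) by Riemann extension, and \(\pi_{\beta\alpha}\) is meromorphic with at most simple poles along the \(D_j\).

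Second, I will rule out the poles one component at a time. Suppose \(\pi_{\beta\alpha}=g/z_j\) near a general point of \(D_j\), with \(g\) holomorphic and \(g|_{D_j}\not\equiv0\). Then \(|\pi_{\beta\alpha}|\ge c|z_j|^{-1}\) along a sequence of points approaching such a general point. This contradicts the bound \(|\pi_{\beta\alpha}|\le C\rho_j^{-\operatorname{wt}_{D_j}(e_\alpha)}\), since the exponent is less than \(1\) and the other \(\rho_m\) stay bounded away from \(0\) there. Consequently \(\pi_{\beta\alpha}\) is holomorphic off a codimension-two set, namely the pairwise intersections \(D_j\cap D_m\) together with the non-general points. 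Hartogs' theorem then makes it holomorphic on all of \(U\). An alternative that avoids the separate step is a direct iterated Riemann-extension argument: \(z_j^{a}\)-growth with \(a<1\) is removable because such a function lies in \(L^2_{\mathrm{loc}}\) on each slice disc.

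Third, I will show the extension lies in \(P(E_*)\) via criterion (ii) of Lemma~\ref{lem:parend}. Take a pair with \(\operatorname{wt}_{D_j}(e_\beta)>\operatorname{wt}_{D_j}(e_\alpha)\). The displayed bound gives \(|\pi_{\beta\alpha}|\le C\rho_j^{\delta}\prod_{m\ne j}\rho_m^{-a}\) for some \(\delta>0\). At general points of \(D_j\) the holomorphic function \(\pi_{\beta\alpha}\) therefore tends to zero, so \(\pi_{\beta\alpha}|_{D_j}\equiv0\) and \(z_j\mid\pi_{\beta\alpha}\). The same argument works for any adapted frame, which covers the quantifier in (ii).

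The main obstacle is the removal of the singularities near the crossings, where the bound blows up simultaneously in several \(\rho_m\). Handling one divisor at a time at general points and then invoking Hartogs across the codimension-two locus should dispose of it.
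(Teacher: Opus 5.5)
Your proof is correct and follows the paper's route: you bound the coefficients using $|\pi(e_\alpha)|_h\le|e_\alpha|_h$, extend $z_1\cdots z_\ell\,\pi_{\beta\alpha}$ by Riemann extension, and get $z_j\mid\pi_{\beta\alpha}$ from the decay when $\operatorname{wt}_{D_j}(e_\beta)>\operatorname{wt}_{D_j}(e_\alpha)$, which gives membership in $P(E_*)$ via Lemma~\ref{lem:parend}. The one difference is how the simple poles are removed. The paper notes that the extension $G$ satisfies $|G|\le C\prod_j|z_j|^{1+\operatorname{wt}_{D_j}(e_\beta)-\operatorname{wt}_{D_j}(e_\alpha)}$ with positive exponents, so $G$ vanishes on every $D_j$ and is divisible by $z_1\cdots z_\ell$; this handles the crossings directly. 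You instead argue at points of $D_j$ off the other components and then apply Hartogs across the codimension-two crossing locus, which is equally valid.
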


\begin{proof}
Two holomorphic extensions agree on the dense open set \(\Xo\), which proves uniqueness.
Work in an adapted chart \(U\) as in \eqref{eq:adap-coor}, with adapted frame, and write
\(\pi(e_\alpha)=\sum_\beta\pi_{\beta\alpha}e_\beta\). An orthogonal projection satisfies
\(|\pi(e_\alpha)|_h\le|e_\alpha|_h\), so
\eqref{eq:modelmetric} and \eqref{eq:uniformlyadapted} give
\begin{equation}\label{eq:projectioncoeff}
   |\pi_{\beta\alpha}|\le C\prod_{j=1}^{\ell}
      \rho_j^{\,\operatorname{wt}_{D_j}(e_\beta)
         -\operatorname{wt}_{D_j}(e_\alpha)} .
\end{equation}
All exponents lie in \((-1,1)\), so \(\bigl(\prod_{j\le\ell}z_j\bigr)\pi_{\beta\alpha}\) is
bounded near \(D\). The Riemann extension theorem extends this holomorphic function from
\(U\setminus D\) to a holomorphic function \(G\) on \(U\)
\cite[Ch.~I, Cor.~5.25]{DemaillyBook}. It satisfies
\(|G|\le C\prod_{j=1}^{\ell}|z_j|^{1+\operatorname{wt}_{D_j}(e_\beta)
-\operatorname{wt}_{D_j}(e_\alpha)}\), and each exponent is positive, so
\(G\) vanishes on every \(D_j\cap U=\{z_j=0\}\) and is therefore divisible by
\(z_1\cdots z_\ell\).
Hence \(\pi_{\beta\alpha}=G/(z_1\cdots z_\ell)\) is holomorphic on \(U\).

Suppose \(\operatorname{wt}_{D_j}(e_\beta)>
\operatorname{wt}_{D_j}(e_\alpha)\). Away from the other components,
\eqref{eq:projectioncoeff} gives \(\pi_{\beta\alpha}\to0\) as \(z_j\to0\), so
\(\pi_{\beta\alpha}\) vanishes on a dense open subset of \(D_j\cap U\) and hence on all of
it; thus \(z_j\mid\pi_{\beta\alpha}\). By Lemma~\ref{lem:parend}, \(\pi\in P(E_*)\).
\end{proof}

\begin{lemma}\label{lem:directsummands}
Let \(E_*\) be a locally abelian parabolic bundle and let \(\pi\in P(E_*)\) satisfy
\(\pi^2=\pi\). Put
\(E_1:=\im\pi\), \(E_2:=\ker\pi\) and give them the induced parabolic structures
\(F^i_c(E_{s,*}):=F^i_c(E_*)\cap E_s|_{D_i}\). Then \(E_1\) and \(E_2\) are holomorphic
subbundles of \(E\), their induced parabolic structures are locally abelian, and
\begin{equation}\label{eq:par-splitting}
   F^i_c(E_*)=F^i_c(E_{1,*})\oplus F^i_c(E_{2,*})
   \qquad\text{for all }i,c .
\end{equation}
In particular \(E_*=E_{1,*}\oplus E_{2,*}\) as parabolic bundles.
\end{lemma}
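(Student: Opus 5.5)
Since \(\pi\) is a holomorphic idempotent endomorphism of \(E\) on the connected manifold \(X\), its rank is constant. The characteristic polynomial of \(\pi\) has locally constant coefficients, and \(\tr\pi\) is a holomorphic function with integer values, hence constant. So \(\im\pi\) and \(\ker\pi=\im(\Id-\pi)\) are holomorphic subbundles with \(E=E_1\oplus E_2\). Also \(\Id-\pi\in P(E_*)\), because \(P(E_*)\) is an algebra by Lemma~\ref{lem:parend}. This settles the first claim.

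For the splitting \eqref{eq:par-splitting}, I restrict to \(D_i\). Since \(\pi\) and \(\Id-\pi\) preserve \(F^i_c(E_*)\), any \(v\in F^i_c(E_*)\) decomposes as \(v=\pi v+(\Id-\pi)v\) with \(\pi v\in F^i_c\cap E_1|_{D_i}\) and \((\Id-\pi)v\in F^i_c\cap E_2|_{D_i}\). This gives \(F^i_c(E_*)=F^i_c(E_{1,*})\oplus F^i_c(E_{2,*})\). The summands are images of the subbundle \(F^i_c(E_*)\) under the idempotents \(\pi|_{D_i}\) and \((\Id-\pi)|_{D_i}\) restricted to it, so they have locally constant rank and are subbundles. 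Right continuity, exhaustiveness and finiteness of the jumps pass to these intersections.

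The main point is local abelianness of the induced structures. I need an adapted frame of \(E\) in an adapted chart that is compatible with the decomposition. I would first try this: take an adapted frame \(e_1,\dots,e_r\) of \(E\) and consider the \(2r\) sections \(\pi e_\alpha\) and \((\Id-\pi)e_\alpha\). Fix the point \(x\in D\) at the centre of the chart. Since \(\pi\in P(E_*)\), Lemma~\ref{lem:parend}(ii) gives \(\pi e_\alpha=\sum_\beta \pi_{\beta\alpha}e_\beta\), where \(z_j\mid\pi_{\beta\alpha}\) whenever \(\operatorname{wt}_{D_j}(e_\beta)>\operatorname{wt}_{D_j}(e_\alpha)\). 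Hence \(\pi e_\alpha|_{D_j}\in F^j_{\operatorname{wt}_{D_j}(e_\alpha)}\) for every \(j\le\ell\), so \(\pi e_\alpha\) has \(D_j\)-weight at most \(\operatorname{wt}_{D_j}(e_\alpha)\) for all \(j\) simultaneously. The same holds for \((\Id-\pi)e_\alpha\).

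From these \(2r\) sections I would select \(\operatorname{rk}E_1\) sections \(\pi e_\alpha\) and \(\operatorname{rk}E_2\) sections \((\Id-\pi)e_{\alpha'}\) that together form a frame near \(x\). I would choose them by a simultaneous Gaussian elimination, working at \(x\) in the multi-graded pieces \(\bigcap_j\) of the filtrations. The delicate step is to show that the selected frame satisfies \eqref{eq:adaptedframe}, i.e.\ that its weights realize the filtrations exactly. The approach is via constituent sheaves. By Lemma~\ref{lem:parend}(iii), \(\pi E_{\mathbf c}\subset E_{\mathbf c}\), hence \(E_{\mathbf c}=\pi E_{\mathbf c}\oplus(\Id-\pi)E_{\mathbf c}\). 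Each summand is a direct summand of the locally free sheaf \(E_{\mathbf c}\), hence locally free. Moreover \(\pi E_{\mathbf c}=E_{\mathbf c}\cap E_1\), which is the constituent sheaf of \(E_{1,*}\).

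Local abelianness of \(E_{1,*}\) is then equivalent to finding a frame of \(E_1\) whose monomial rescalings generate \(E_{1,\mathbf c}\) for all \(\mathbf c\) simultaneously. I would obtain such a frame by choosing an adapted frame \(e_\alpha\) of \(E\) and taking a maximal subset of \(\{\pi e_\alpha\}\) that is independent at \(x\). The rescalings \(m_\alpha(\mathbf c)\pi e_\alpha=\pi(m_\alpha e_\alpha)\) generate \(\pi E_{\mathbf c}\), which gives generation. Linear independence has to be checked on all the graded pieces at \(x\), where the multi-filtration at a crossing point splits by the locally abelian hypothesis. I expect this multi-filtration bookkeeping at crossing points to be the main obstacle.
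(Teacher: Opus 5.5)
Your first two steps (constant rank of \(\pi\), so \(E_1,E_2\) are subbundles; and the splitting \eqref{eq:par-splitting} via \(v=\pi v+(\Id-\pi)v\)) are correct and match the paper. The gap is in the main step, local abelianness of \(E_{1,*}\) and \(E_{2,*}\). You leave it open (``I expect this \dots\ to be the main obstacle''), and the recipe you do propose fails: an arbitrary maximal subset of \(\{\pi e_\alpha\}\) that is independent at \(x\) need not be adapted.

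Work locally near a point of \(D_1\) away from the other components. Take an adapted frame \(e_1,e_2,e_3\) with \(\operatorname{wt}_{D_1}(e_1)=0\) and \(\operatorname{wt}_{D_1}(e_2)=\operatorname{wt}_{D_1}(e_3)=w>0\), and set \(\pi e_1=e_1\), \(\pi e_2=e_2\), \(\pi e_3=e_1+e_2\).
\begin{itemize}
\item \(\pi^2=\pi\), and \(\pi\in P(E_*)\) because \(\pi_{21}=\pi_{31}=0\).
\item \(E_1=\operatorname{span}(e_1,e_2)\) and \(F^1_0(E_{1,*})=\operatorname{span}(e_1)\neq0\).
\item \(\{\pi e_2,\pi e_3\}=\{e_2,e_1+e_2\}\) is a maximal independent subset, yet both members have weight \(w\), so it is not adapted.
\end{itemize}
This is also where your generation argument breaks. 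The rescalings \(m_\alpha(\mathbf c)\pi e_\alpha\) of \emph{all} \(r\) sections do generate \(\pi E_{\mathbf c}\). For the selected ones, however, they give \(z_1e_2\) and \(z_1(e_1+e_2)\) when \(c_1<w\), which miss \(e_1\in E_{1,\mathbf c}\). The problem is that the exponents \(m_\alpha\) come from the \(E\)-weights of \(e_\alpha\), not from the \(E_1\)-weights of \(\pi e_\alpha\). Separately, your claimed equivalence between local abelianness and simultaneous generation of all \(E_{1,\mathbf c}\) is asserted, not proved.

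What is missing is a rule for which \(\pi e_\alpha\) to keep, together with a mechanism that handles all components at once. The paper gets both from the multiweight block structure at the centre \(o\):
\begin{enumerate}
\item Group the adapted frame by multiweight \(\mathbf a\in[0,1)^\ell\). By Lemma~\ref{lem:parend}(ii), the block \(\pi^{\mathbf b\mathbf a}(o)\) vanishes unless \(\mathbf b\le\mathbf a\). So \(\pi(o)\) is block triangular and each diagonal block \(\pi^{\mathbf a\mathbf a}(o)\) is idempotent.
\item A constant change of basis inside each multiweight block keeps the frame adapted and makes these diagonal blocks diagonal with entries \(0,1\). Let \(\Pi\) be the resulting constant block-diagonal idempotent; it lies in \(P(E_*)\).
\item Set \(Q:=\pi\Pi+(\Id-\pi)(\Id-\Pi)\in P(E_*)\). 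Then \(Q(o)\) is block triangular with identity diagonal blocks, so \(Q\) is invertible near \(o\) and \(Q^{-1}\in P(E_*)\) by Lemma~\ref{lem:parend}. Moreover \(\pi Q=Q\Pi\).
\item Since \(Q^{\pm1}\) preserve every \(F^i_c\), the frame \(Qe_\alpha\) is adapted with the same multiweights. It consists of \(\pi e_\alpha\) for \(\Pi e_\alpha=e_\alpha\), a frame of \(E_1\), and \((\Id-\pi)e_\alpha\) for \(\Pi e_\alpha=0\), a frame of \(E_2\). With \eqref{eq:par-splitting}, both subframes are adapted.
\end{enumerate}
The crossing-point bookkeeping you anticipated is thus absorbed into the single fact \(Q^{\pm1}\in P(E_*)\). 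In the example above, this rule keeps \(\pi e_1,\pi e_2\) for \(E_1\) and uses \((\Id-\pi)(e_3-e_2)=e_3-e_1-e_2\) for \(E_2\).
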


\begin{proof}
\emph{Splitting the filtrations.}
A holomorphic idempotent has locally free image and kernel and \(E=E_1\oplus E_2\). Since
\(\pi\) and \(\Id-\pi\) lie in \(P(E_*)\), each preserves every \(F^i_c\), and
\(F^i_c=\pi(F^i_c)\oplus(\Id-\pi)(F^i_c)\); moreover
\(\pi(F^i_c)=F^i_c\cap E_1|_{D_i}=F^i_c(E_{1,*})\), because a vector of
\(F^i_c\cap E_1|_{D_i}\) is fixed by \(\pi\). As \(\pi|_{F^i_c}\) is an idempotent
endomorphism of a vector bundle, its image is a subbundle. This proves
\eqref{eq:par-splitting}; it remains to produce adapted frames.

\emph{Adapted frames.}
Fix an adapted chart \eqref{eq:adap-coor} centred at a point \(o\) of \(D\), with adapted
frame \(e_1,\dots,e_r\) and multiweights
\[
   \operatorname{wt}(e_\alpha)
   :=\bigl(\operatorname{wt}_{D_j}(e_\alpha)\bigr)_{j=1}^{\ell} \in[0,1)^\ell ;
\]
all statements below are about germs at \(o\). Let \(W\subset[0,1)^\ell\) be the set of
occurring multiweights, partially ordered by \(\mathbf a\le\mathbf b\) if \(a_j\le b_j\)
for all \(j\), let \(E^{(\mathbf a)}\) be the free \(\cO\)-submodule spanned by the
\(e_\alpha\) with \(\operatorname{wt}(e_\alpha)=\mathbf a\), so that
\(E=\bigoplus_{\mathbf a\in W}E^{(\mathbf a)}\), and write
\(A=(A^{\mathbf b\mathbf a})\) for the resulting block decomposition of \(A\in\End(E)\).
By Lemma~\ref{lem:parend}, \(A\in P(E_*)\) if and only if
\(A^{\mathbf b\mathbf a}\equiv0\bmod z_j\) whenever \(b_j>a_j\); in particular
\(A^{\mathbf b\mathbf a}(o)=0\) unless \(\mathbf b\le\mathbf a\).

Since \(\pi^{\mathbf a\mathbf c}(o)\pi^{\mathbf c\mathbf a}(o)\) can be nonzero only for
\(\mathbf a\le\mathbf c\le\mathbf a\), the \(\mathbf a\)-th diagonal block of
\(\pi(o)^2=\pi(o)\) reduces to
\(\bigl(\pi^{\mathbf a\mathbf a}(o)\bigr)^2=\pi^{\mathbf a\mathbf a}(o)\). A constant
change of basis inside each \(E^{(\mathbf a)}\) preserves multiweights, hence adaptedness,
so we may assume that every \(\pi^{\mathbf a\mathbf a}(o)\) is diagonal with entries \(0\)
and \(1\). Let \(\Pi\) be the constant block diagonal idempotent whose
\(\mathbf a\)-block is \(\pi^{\mathbf a\mathbf a}(o)\); being constant and block diagonal,
\(\Pi\in P(E_*)\). Set
\[
   Q:=\pi\Pi+(\Id-\pi)(\Id-\Pi),
\]
which lies in \(P(E_*)\) because \(P(E_*)\) is a sheaf of algebras. Its
\(\mathbf a\)-th diagonal block at \(o\) is
\[
   \pi^{\mathbf a\mathbf a}(o)\Pi^{\mathbf a\mathbf a}
   +\bigl(\Id-\pi^{\mathbf a\mathbf a}(o)\bigr)
    \bigl(\Id-\Pi^{\mathbf a\mathbf a}\bigr)
   =\bigl(\Pi^{\mathbf a\mathbf a}\bigr)^2
    +\bigl(\Id-\Pi^{\mathbf a\mathbf a}\bigr)^2=\Id,
\]
while \(Q^{\mathbf b\mathbf a}(o)=0\) unless \(\mathbf b\le\mathbf a\). Ordering \(W\)
compatibly with its partial order therefore makes \(Q(o)\) block triangular with identity
diagonal, so \(\det Q(o)=1\) and \(Q\) is invertible after shrinking the chart; by
Lemma~\ref{lem:parend}, \(Q^{-1}\in P(E_*)\). Finally \(\pi^2=\pi\) gives
\(\pi(\Id-\pi)=0\) and \(\Pi^2=\Pi\) gives \((\Id-\Pi)\Pi=0\), whence
\[
   \pi Q=\pi\Pi=Q\Pi .
\]

Both \(Q\) and \(Q^{-1}\) preserve every \(F^i_c\), so \(Q(F^i_c)=F^i_c\) and the frame
\(f_\alpha:=Q(e_\alpha)\) is again adapted, with the same multiweights. By
\(\pi Q=Q\Pi\), the isomorphism \(Q\) carries \(\im\Pi\) onto \(E_1\) and \(\ker\Pi\) onto
\(E_2\). As \(\Pi\) is diagonal with entries \(0\) and \(1\), those \(f_\alpha\) with
\(\Pi(e_\alpha)=e_\alpha\) form a frame of \(E_1\) and the remaining ones a frame of
\(E_2\); by \eqref{eq:par-splitting}, both frames are adapted. Hence \(E_{1,*}\) and
\(E_{2,*}\) are locally abelian. Away from \(D\) there is nothing to prove.
\end{proof}

\begin{proof}[Proof of Theorem~\ref{thm:HEcriterion}]
We induct on \(r=\rk E\). Let \(\cV\subset E\) be saturated with \(0<k=\rk\cV<r\). Taking
\(\cV=E\) in Theorem~\ref{thm:degree} gives
\(\pardeg_\Omega(E_*)=\mu r\Vol_\omega(\Xo)/2\pi\), so
\(\mu_\Omega(E_*)=\mu\Vol_\omega(\Xo)/2\pi\); applied to \(\cV\), it gives
\begin{equation}\label{eq:slopeidentity}
   \mu_\Omega(\cV_*)
   =\frac{\mu}{2\pi}\Vol_\omega(\Xo)
    -\frac1{2\pi k}\int_{\Xo}|\bar\partial\pi_\cV|^2dV_\omega
   \ \le\ \mu_\Omega(E_*),
\end{equation}
which is semistability, with equality if and only if \(\bar\partial\pi_\cV=0\). If the
inequality in \eqref{eq:slopeidentity} is strict for every such \(\cV\) -- in particular
if \(r=1\), when no such \(\cV\) exists -- then \(E_*\) is stable and we are done.

Assume equality for some \(\cV\), and let \(Z\) be supplied by
Lemma~\ref{lem:goodlocus}. Then \(\pi_\cV\) is holomorphic on
\(\Xo\setminus Z\). Since \(Z\cap\Xo\) is an analytic set of codimension at least two,
the Hartogs extension theorem extends \(\pi_\cV\) holomorphically across it; see also
\cite[Ch.~II, Prop.~6.1 and Ch.~I, Cor.~5.25]{DemaillyBook}. The
identities \(\pi^2=\pi=\pi^{*h}\) persist across \(Z\cap\Xo\) by continuity.
Lemma~\ref{lem:projectionextension} extends
it to \(\pi\in P(E_*)\) with \(\pi^2=\pi\). By Lemma~\ref{lem:directsummands},
\(E_*=E_{1,*}\oplus E_{2,*}\) with \(E_1=\im\pi\), both summands locally abelian. Both
\(E_1\) and \(\cV\) are saturated subsheaves of \(E\) (indeed \(E/E_1\cong E_2\) is locally
free) and they agree on the complement of a proper analytic subset of \(X\); therefore \(E_1=\cV\).

The decomposition \(E=E_1\oplus E_2\) is \(h\)-orthogonal on \(\Xo\), so the Chern
connection and curvature split and \(\mathrm{i}\Lambda_\omega F_{h_s}=\mu\Id_{E_s}\) for
\(s=1,2\), where \(h_s\) is the restricted metric. By Lemma~\ref{lem:directsummands} we may
choose adapted frames of \(E_1\) and \(E_2\) whose union is an adapted frame of \(E\); in
such a frame the restriction of the local model metric \eqref{eq:modelmetric} of \(E_*\)
is the local model metric of \(E_{s,*}\), so \(h_s\) is uniformly adapted to
\(E_{s,*}\). The splitting of the filtrations also shows that \eqref{eq:oneweight} is
inherited by each summand, so the induction hypothesis applies to \(E_{s,*}\), \(h_s\).
Applying Theorem~\ref{thm:degree} to \(E_s\) gives
\[
   \mu_\Omega(E_{s,*})
   =\frac{\mu}{2\pi}\Vol_\omega(\Xo)
   =\mu_\Omega(E_*)
   \qquad (s=1,2).
\]
Hence \(E_*\) is a direct sum of stable parabolic bundles of slope
\(\mu_\Omega(E_*)\), splitting every filtration.
\end{proof}

\section{Proof of Theorem A}\label{sec:thmA}

\subsection{The parabolic tangent bundle}

We introduce a natural parabolic structure on \(TX\) associated to the pair \((X,\Delta)\). We refer to it as the \emph{parabolic tangent bundle}.

\begin{definition}\label{def:parTX}
\(TX_*\) is the parabolic structure on \(TX\) given along \(D_i\) by
\begin{equation}\label{eq:TXfiltration}
   F^i_c(TX_*)=
   \begin{cases}
      TD_i,& 0\le c<w_i,\\
      TX|_{D_i},& w_i\le c<1 .
   \end{cases}
\end{equation}
\end{definition}

In adapted coordinates \eqref{eq:adap-coor}, the vectors \(\partial_{z_\alpha}\) with
\(\alpha\neq j\) restrict to a frame of \(TD_j\) along \(D_j=\{z_j=0\}\), while the class
of \(\partial_{z_j}\) spans \(N_j:=TX|_{D_j}/TD_j\). Thus the coordinate frame
\(\partial_{z_1},\dots,\partial_{z_n}\) splits every filtration \eqref{eq:TXfiltration}
simultaneously, with weights
\begin{equation}\label{eq:tangentmultiweights}
   \operatorname{wt}_{D_j}(\partial_{z_\alpha})
   =\begin{cases}w_j,&\alpha=j,\\0,&\alpha\neq j,\end{cases}
   \qquad 1\le j\le\ell,\quad 1\le\alpha\le n ,
\end{equation}
so \(TX_*\) is locally abelian and \(\partial_{z_1},\dots,\partial_{z_n}\) is an adapted
frame. Along \(D_i\) the only positive weight is \(w_i\), with
\(\Gr^i_{w_i}(TX_*)=N_i\) of rank one, so \eqref{eq:parc1} and \eqref{eq:logc1} give
\[
   \parc_1(TX_*)=c_1(X)-\sum_iw_i[D_i]=c_1(X,\Delta).
\]

The zero constituent of \(TX_*\) is the logarithmic tangent bundle. The
\emph{logarithmic tangent bundle} \(\Tlog\subset TX\) is the locally free sheaf of
holomorphic vector fields tangent to every component of \(D\); in adapted coordinates
\eqref{eq:adap-coor} it has the frame
\begin{equation}\label{eq:logtangentframe}
   z_1\partial_{z_1},\dots,z_\ell\partial_{z_\ell},
   \partial_{z_{\ell+1}},\dots,\partial_{z_n}.
\end{equation}
Since \(F^i_0(TX_*)=TD_i\), formula \eqref{eq:constituentglobal} gives
\begin{equation}\label{eq:TXzero}
   (TX)_{\mathbf 0}=\Tlog ,
\end{equation}
and \eqref{eq:logtangentframe} is the frame \eqref{eq:constituents} of the zero
constituent. The logarithmic tangent bundle fits into the exact sequence
\[
   0\longrightarrow\Tlog\longrightarrow TX
   \longrightarrow\bigoplus_i(\iota_i)_*N_i\longrightarrow0 .
\]

\subsection{Proof of Theorem A}
Let \(\omega\) be a K\"ahler--Einstein cone metric as in Theorem~A, write
\(\Ric(\omega)=\lambda\omega\), and let \(g\) be the induced Hermitian metric on
\(TX|_{\Xo}\). For the Hermitian metric induced by a K\"ahler metric on the tangent
bundle, the mean curvature is the Ricci tensor \cite[Ch.~I, (7.23)]{Kobayashi}:
\[
   \mathrm{i}\Lambda_\omega F_g=g^{-1}\Ric(\omega),
\]
where \(g^{-1}\Ric(\omega)\) is the endomorphism with components
\(g^{r\bar t}\Ric_{s\bar t}\). Hence \(\mathrm{i}\Lambda_\omega F_g=\lambda\Id_{TX}\) on
\(\Xo\).

By \eqref{eq:productcone}, in adapted coordinates \eqref{eq:adap-coor}, the induced
metric of a cone metric satisfies, for \(v=\sum_\alpha v^\alpha\partial_{z_\alpha}\),
\begin{equation}\label{eq:tangentmodel}
   |v|_g^2\asymp\sum_{j=1}^{\ell}\rho_j^{-2w_j}|v^j|^2
   +\sum_{j=\ell+1}^{n}|v^j|^2 .
\end{equation}

\begin{lemma}\label{lem:tangentmodel}
The Hermitian metric \(g\) induced on \(TX|_{\Xo}\) by a cone metric is uniformly adapted
to \(TX_*\).
\end{lemma}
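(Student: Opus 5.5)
The proof is a direct comparison of \eqref{eq:tangentmodel} with the local model metric of Definition~\ref{def:modelmetric}, computed in the coordinate frame. Fix a point of \(D\) and an adapted chart \(U\) as in \eqref{eq:adap-coor}. By \eqref{eq:tangentmultiweights}, the coordinate frame \(\partial_{z_1},\dots,\partial_{z_n}\) is an adapted frame of \(TX_*\). For \(\alpha\le\ell\) the only nonzero weight of \(\partial_{z_\alpha}\) is \(\operatorname{wt}_{D_\alpha}(\partial_{z_\alpha})=w_\alpha\). For \(\alpha>\ell\) all of its weights along \(D_1,\dots,D_\ell\) vanish. Substituting these weights into \eqref{eq:modelmetric} gives
\[
   \Bigl|\sum_\alpha v^\alpha\partial_{z_\alpha}\Bigr|^2_{h_{\mathrm{mod},U}}
   =\sum_{j=1}^{\ell}\rho_j^{-2w_j}|v^j|^2+\sum_{j=\ell+1}^{n}|v^j|^2 .
\]

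It then remains to justify \eqref{eq:tangentmodel}, which I would derive from (C2). In the chart, \(g\asymp g_{\boldsymbol\beta}\) as Hermitian forms. The product metric \eqref{eq:productcone} evaluated on \(v\) equals \(\sum_{j\le\ell}|z_j|^{2\beta_j-2}|v^j|^2+\sum_{j>\ell}|v^j|^2\). Since \(2\beta_j-2=-2w_j\) and \(\rho_j\asymp|z_j|\) on the chart (after shrinking), we have \(|z_j|^{-2w_j}\asymp\rho_j^{-2w_j}\). This gives \(g\asymp h_{\mathrm{mod},U}\) on \(U\cap\Xo\), which is exactly \eqref{eq:uniformlyadapted}.

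The only care needed is uniformity of the constants. One shrinks \(U\) to a relatively compact subchart on which both the (C2) constants and the constants in \(\rho_j\asymp|z_j|\) hold. Definition~\ref{def:uniformlyadapted} only asks for some adapted chart around each point, so this shrinking is allowed. By Lemma~\ref{lem:modelindependence}, the particular choices of frame, coordinates and metrics \(h_i\) do not matter. I expect no real obstacle; the main point is just to check the weight bookkeeping at crossing points, where \(\partial_{z_j}\) has weight \(w_j\) along \(D_j\) and weight \(0\) along the other \(D_m\). That is what makes the product over \(j\) in \eqref{eq:modelmetric} collapse to the single factor \(\rho_j^{-2w_j}\).
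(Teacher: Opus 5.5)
Your proposal is correct and follows the paper's approach: substitute the multiweights \eqref{eq:tangentmultiweights} of the coordinate frame into \eqref{eq:modelmetric} and compare with \eqref{eq:tangentmodel}. The paper takes \eqref{eq:tangentmodel} directly from \eqref{eq:productcone}. You additionally spell out the step from (C2), using \(2\beta_j-2=-2w_j\) and \(\rho_j\asymp|z_j|\) on a shrunken chart, which is the right justification.
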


\begin{proof}
By \eqref{eq:tangentmultiweights}, the right-hand side of \eqref{eq:tangentmodel} is the
local model metric \eqref{eq:modelmetric} of \(TX_*\) in the adapted frame
\(\partial_{z_1},\dots,\partial_{z_n}\).
\end{proof}

By Lemma~\ref{lem:tangentmodel}, \(g\) is uniformly adapted to \(TX_*\), and
\(\mathrm{i}\Lambda_\omega F_g=\lambda\Id_{TX}\). Since \(TX_*\) is locally abelian with
the rank-one positive weight \(w_i\) along \(D_i\), Theorem~\ref{thm:HEcriterion} applies
and proves Theorem~A.\qed

\section{Proof of Theorem B}\label{sec:thmB}

\subsection{Tian's extension and its parabolic structure}
Let \(\Omega\) be a K\"ahler class on \(X\). Extensions of \(TX\) by \(\cO_X\) are
classified by \(H^1(X,\Hom(TX,\cO_X))=H^1(X,\Omega^1_X)\cong H^{1,1}(X,\C)\). We identify
\((1,1)\)-forms with elements of \(A^{0,1}(\Hom(TX,\cO))\) by
\[
   \sigma_{p\bar q}\,dz^p\wedge d\bar z^q
   \longmapsto
   \sigma_{p\bar q}\,d\bar z^q\otimes dz^p .
\]
Thus, for a \((1,0)\)-form \(\tau=\tau_p\,dz^p\), viewed as a section of
\(\Hom(TX,\cO)\),
\[
   \bar\partial\tau=(\partial_{\bar q}\tau_p)\,d\bar z^q\otimes dz^p .
\]
On the \(C^\infty\) bundle
\(\cO_X\oplus TX\), for a smooth closed real \((1,1)\)-form \(\sigma\) on an open set,
consider
\[
   \bar\partial_\sigma:=\begin{pmatrix}\bar\partial&\sigma\\0&\bar\partial\end{pmatrix},
\]
a \(\bar\partial\)-operator because \(\bar\partial\sigma=0\). The holomorphic bundle
\((\cO_X\oplus TX,\bar\partial_\sigma)\) sits in an exact sequence
\begin{equation}\label{eq:Tianextension}
   0\longrightarrow\cO_X\longrightarrow E \stackrel{p}{\longrightarrow}TX\longrightarrow0
\end{equation}
of class \([\sigma]\), and changing \(\sigma\) by a \(\bar\partial\)-exact form yields an
isomorphic extension. Following Tian \cite{Tian92}, let \(E\) be the extension
\eqref{eq:Tianextension} whose class is a non-zero multiple of \(\Omega\). For
\(c\in\C^*\) the map \(\operatorname{diag}(c^{-1},\Id)\) on \(\cO_X\oplus TX\)
intertwines \(\bar\partial_{c\sigma}\) with \(\bar\partial_\sigma\) and commutes with
\(p\), so the extensions of class \(c\Omega\), \(c\in\C^*\), are all isomorphic by
isomorphisms commuting with \(p\); \(E\) is thus determined by the line
\(\C\cdot\Omega\subset H^{1,1}(X,\C)\).

\begin{definition}\label{def:Tian-parabolic}
\(E_*\) is the parabolic structure on \(E\) given by
\begin{equation}\label{eq:Tianfiltration}
   F^i_c(E_*):=p^{-1}\bigl(F^i_c(TX_*)\bigr)\subset E|_{D_i}.
\end{equation}
\end{definition}

Since the isomorphisms between the extensions of class \(c\Omega\) commute with \(p\),
they preserve the filtration \eqref{eq:Tianfiltration}, and the parabolic bundle \(E_*\)
is independent of the multiple. Over an adapted chart \eqref{eq:adap-coor} the sequence
\eqref{eq:Tianextension} splits holomorphically, its quotient being locally free, and in
such a splitting \(F^i_c(E_*)=\cO_{D_i}\oplus F^i_c(TX_*)\). Hence \(E_*\) is locally
abelian with adapted frame \(1,\partial_{z_1},\dots,\partial_{z_n}\), the first vector of
multiweight zero and the remaining ones with the multiweights
\eqref{eq:tangentmultiweights}. In particular \(\Gr^i_{w_i}(E_*)=N_i\), and
\eqref{eq:Tianfiltration} gives
\[
   (E)_{\mathbf 0}=p^{-1}\bigl(\Tlog\bigr).
\]

\subsection{Proof of Theorem B}
Let \(\omega\) be a K\"ahler--Einstein cone metric as in Theorem~B, with \([\omega]=\Omega\)
and \(\Ric(\omega)=\lambda\omega\), \(\lambda>0\), and let \(g\) be the induced Hermitian
metric on \(TX|_{\Xo}\). Set
\begin{equation}\label{eq:tpositive}
   t:=\sqrt{\frac{\lambda}{n+1}},
\end{equation}
the normalization for which the two diagonal curvature blocks computed below agree, and
represent \(E\) by \((\cO_X\oplus TX,\bar\partial_{t\omega_0})\), with \(\omega_0\) as in
\textup{(C1)}; its class is \(t\Omega\).

\emph{A convenient representative.}
By \textup{(C1)}, \(\omega=\omega_0+\mathrm{i}\partial\bar\partial\psi\) on \(\Xo\),
with \(\psi\) bounded on \(X\) and smooth on \(\Xo\). On \(\Xo\) we represent the extension
by the K\"ahler--Einstein form itself, putting
\(\mathcal F:=(\cO\oplus TX,\bar\partial_{t\omega})\); the advantage of this representative
is that \(t\omega\) is parallel for the K\"ahler metric, which is what makes the curvature
computation transparent.

Set \(\tau:=t\,\mathrm{i}\partial\psi\), a smooth \((1,0)\)-form on \(\Xo\). Under
the identification of \((1,1)\)-forms with elements of \(A^{0,1}(\Hom(TX,\cO))\),
\(\bar\partial\tau\) corresponds to
\(t\,\mathrm{i}\partial\bar\partial\psi=t(\omega-\omega_0)\) by
\eqref{eq:globalpotential}. A direct
computation gives, for \(\upsilon:=\begin{psmallmatrix}1&\tau\\0&1\end{psmallmatrix}\),
\[
   \upsilon^{-1}\circ\bar\partial_{t\omega_0}\circ\upsilon
   =\bar\partial_{t\omega_0+\bar\partial\tau}=\bar\partial_{t\omega} ,
\]
so \(\upsilon:\mathcal F\to E|_{\Xo}\) is an isomorphism of holomorphic bundles.

\emph{The Hermitian--Einstein metric.}
Equip \(\mathcal F\) with \(h_\omega:=1\oplus g\). In this \(C^\infty\) splitting \(\cO\) is
a holomorphic subbundle of \(\mathcal F\) with flat induced metric, the quotient is
\((TX,g)\), and the splitting is \(h_\omega\)-orthogonal. Writing \(B:=t\omega\in
A^{0,1}(\Hom(TX,\cO))\), the Chern connection of \((\mathcal F,h_\omega)\) is
\(\begin{psmallmatrix}\nabla_{\cO}&B\\-B^*&\nabla_{TX}\end{psmallmatrix}\), and its
curvature has diagonal blocks \(-B\wedge B^*\) on \(\cO\) and \(F_g-B^*\wedge B\) on
\(TX\) \cite[Ch.~I, \S6]{Kobayashi}, \cite[\S3.1]{ChiLi}; the off-diagonal blocks
\(\pm\nabla B^{(*)}\) vanish because \(\omega\) is parallel. In a \(g\)-unitary coframe
\(\mathrm{i}\Lambda_\omega(B\wedge B^*)=-t^2n\) and
\(\mathrm{i}\Lambda_\omega(B^*\wedge B)=t^2\Id_{TX}\), so, with
\(\mathrm{i}\Lambda_\omega F_g=\lambda\Id_{TX}\) from Section~\ref{sec:thmA},
\[
   \mathrm{i}\Lambda_\omega F_{h_\omega}\big|_{\cO}=t^2n,
   \qquad
   \mathrm{i}\Lambda_\omega F_{h_\omega}\big|_{TX}=(\lambda-t^2)\Id_{TX} .
\]
Equation~\eqref{eq:tpositive} makes both equal to \(n\lambda/(n+1)\), so
\(\mathrm{i}\Lambda_\omega F_{h_\omega}=\frac{n\lambda}{n+1}\Id\).

\emph{Uniform adaptedness.}
Transport the metric: \(h:=(\upsilon^{-1})^*h_\omega\) on \(E|_{\Xo}\). Being the
image of \(h_\omega\) under a holomorphic isomorphism, it satisfies
\(\mathrm{i}\Lambda_\omega F_h=\frac{n\lambda}{n+1}\Id_{E}\). By the discussion in
Section~\ref{sec:cone} and \cite[Theorem~B]{GuenanciaPaun}, the bounded potential
\(\psi\) has the standard conical \(C^{2,\alpha,\beta}\) regularity; in particular, in
adapted coordinates \eqref{eq:adap-coor},
\[
   |z_j|^{1-\beta_j}|\partial_{z_j}\psi|\le C\quad(j\le\ell),
   \qquad
   |\partial_{z_j}\psi|\le C\quad(j>\ell),
\]
see also \cite[\S7.1, Definition~2 and Lemma~4]{GuenanciaPaun}. Together with
\textup{(C2)}, this gives \(|\partial\psi|_\omega\le C\). Hence
\(|\tau|_{h_\omega}=t|\partial\psi|_\omega\le C\), so \(\upsilon^{\pm1}\) are
uniformly bounded and \(h\asymp1\oplus g\) as Hermitian forms on \(\cO\oplus TX\).

Finally, cover a neighbourhood of \(D\) by finitely many adapted charts \(U_1,\dots,U_m\)
with \(U_a'\Subset U_a\) still covering a neighbourhood of \(D\), and fix a holomorphic
splitting of \eqref{eq:Tianextension} over each \(U_a\). The change of frame between it and
the \(C^\infty\) splitting is \(\begin{psmallmatrix}1&\tau_a\\0&1\end{psmallmatrix}\)
with \(\tau_a\) the smooth \((1,0)\)-form on \(U_a\) measuring the difference between
the chosen local holomorphic splitting of \eqref{eq:Tianextension} and the fixed global
\(C^\infty\) splitting \(\cO\oplus TX\); by \textup{(B2)},
\(|\tau_a|_\omega\le c_0^{-1/2}|\tau_a|_{\omega_0}\), which is bounded on
\(\overline{U_a'}\) by compactness. Taking the maximum over the finitely many charts, both
this change of frame and its inverse are bounded with respect to \(1\oplus g\) near \(D\),
uniformly. By \eqref{eq:tangentmodel} and the multiweights of the adapted frame
\(1,\partial_{z_1},\dots,\partial_{z_n}\), \(h\) is therefore uniformly adapted to
\(E_*\). Theorem~\ref{thm:HEcriterion} gives parabolic polystability.\qed

\section{Proof of Theorem C}\label{sec:thmC}

\subsection{Parabolic Higgs bundles and the Higgs degree identity}
The \emph{logarithmic cotangent bundle} is the dual \(\OmLog:=\Tlog^*\) of the
logarithmic tangent bundle; in adapted coordinates \eqref{eq:adap-coor} it is generated by
the coframe
\[
\frac{dz_1}{z_1},\dots,\frac{dz_\ell}{z_\ell},
dz_{\ell+1},\dots,dz_n
\]
dual to \eqref{eq:logtangentframe}. We use the standard logarithmic definition of a
parabolic Higgs bundle; cf.\ \cite[\S3.1]{Mochizuki}.

\begin{definition}\label{def:parhiggs}
A \emph{parabolic Higgs bundle} on \((X,D)\) is a parabolic bundle \(E_*\)
together with \(\theta\in H^0\bigl(X,\End(E)\otimes\OmLog\bigr)\) such that
\(\theta\wedge\theta=0\) and \(\theta\) preserves every parabolic filtration, that is,
\(\theta\bigl(F^i_c(E_*)\bigr)\subset F^i_c(E_*)\otimes\OmLog|_{D_i}\) for all \(i,c\).
It is \emph{semistable} if \(\mu_\Omega(\cV_*)\le\mu_\Omega(E_*)\) for every saturated
\(\theta\)-invariant subsheaf \(\cV\subset E\) of positive proper rank, \emph{stable} if
the inequality is always strict, and \emph{polystable} if it is a direct sum of stable
parabolic Higgs bundles of the same parabolic slope, the decomposition splitting every
filtration and making \(\theta\) block diagonal.
\end{definition}

For a Hermitian metric \(h\) on \(E|_{\Xo}\) and a K\"ahler form \(\omega\) on \(\Xo\),
the Hitchin--Simpson equation is
\begin{equation}\label{eq:HS}
   \mathrm{i}\Lambda_\omega\bigl(F_h+[\theta,\theta^{*h}]\bigr)=\mu\Id_E .
\end{equation}
The next statement is the parabolic cone-metric version of the Chern--Weil argument for
Higgs bundles of \cite[Lemma~3.2 and Prop.~3.3]{Simpson}; it extends Theorems~\ref{thm:HEcriterion} and \ref{thm:degree} by introducing a Higgs field.

\begin{proposition}\label{prop:higgsdegree}
Let \((E_*,\theta)\) be a parabolic Higgs bundle on \((X,D)\) whose underlying parabolic
bundle is locally abelian and satisfies \eqref{eq:oneweight}, let \(\omega\) be a cone
metric with \(\Omega=[\omega]\), and let \(h\) be uniformly adapted to \(E_*\), with
\(|\theta|_{h,\omega}\) bounded and \eqref{eq:HS} satisfied. Then for every saturated
\(\theta\)-invariant \(\cV\subset E\) of rank \(k\),
\begin{equation}\label{eq:higgsdegree}
   \pardeg_\Omega(\cV_*)
   =\frac{\mu k}{2\pi}\Vol_\omega(\Xo)
    -\frac1{2\pi}\int_{\Xo}\bigl(|\bar\partial\pi_\cV|^2+|[\theta,\pi_\cV]|^2\bigr)dV_\omega,
\end{equation}
both integrals being finite, and \((E_*,\theta)\) is parabolic Higgs polystable of slope
\(\mu\Vol_\omega(\Xo)/2\pi\).
\end{proposition}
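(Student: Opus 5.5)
The plan is to rerun the proofs of Theorems~\ref{thm:degree} and~\ref{thm:HEcriterion} with one change: the curvature \(F_h\) is replaced by \(F_h+[\theta,\theta^{*h}]\). Nothing in the renormalized-determinant analysis depends on the curvature, so that part can be reused verbatim.

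\emph{Step 1: pointwise Chern--Weil identity.} For a \(\theta\)-invariant saturated \(\cV\), on \(\Xo\setminus Z\) the \(\theta\)-invariance gives \((\Id-\pi)\theta\pi=0\). The standard computation in \cite[Lemma~3.2]{Simpson} then yields
\[
   \tr\bigl(\pi\,\mathrm{i}\Lambda_\omega[\theta,\theta^{*h}]\bigr)=-|[\theta,\pi]|^2 ,
\]
with the sign convention of \eqref{eq:HS}. Combining this with \eqref{eq:CWpointwise} and \eqref{eq:HS} gives
\[
   \alpha_\cV\wedge\Phi_\omega=\frac1{2\pi}\bigl(\mu k-|\bar\partial\pi|^2-|[\theta,\pi]|^2\bigr)dV_\omega .
\]
The hypothesis that \(|\theta|_{h,\omega}\) is bounded, together with \(|\pi|_h\le\sqrt{\rk E}\), makes \(|[\theta,\pi]|^2\) bounded, hence integrable by (B1). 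Local integrability of \(|\bar\partial\pi|^2\) follows exactly as in Lemma~\ref{lem:c1current}, since the Higgs term in the mean curvature is locally bounded on \(\Xo\).

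\emph{Step 2: the degree identity.} Lemmas~\ref{lem:upperdet}, \ref{lem:lowerdet}, \ref{lem:weightedL1} and~\ref{lem:cutoff} concern only \(u\), \(h\) and \(\omega\), so they apply unchanged, and \eqref{eq:ddcu} still holds. Inserting the new identity into \eqref{eq:threeTerms} adds the term \(\int\chi_T|[\theta,\pi]|^2dV_\omega\) on the right. As \(T\to\infty\), this term converges by monotone (or dominated) convergence. The argument of the proof of Theorem~\ref{thm:degree} then forces \(\int|\bar\partial\pi|^2<\infty\) and yields \eqref{eq:higgsdegree}.

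\emph{Step 3: polystability.} I would induct on rank as in the proof of Theorem~\ref{thm:HEcriterion}. Taking \(\cV=E\), which is trivially \(\theta\)-invariant with \(\pi=\Id\), gives the slope \(\mu\Vol_\omega(\Xo)/2\pi\). Semistability follows from \eqref{eq:higgsdegree}. Equality for some \(\cV\) forces both \(\bar\partial\pi=0\) and \([\theta,\pi]=0\) on \(\Xo\setminus Z\). Hartogs extension and Lemmas~\ref{lem:projectionextension} and~\ref{lem:directsummands} then give a parabolic splitting \(E_*=E_{1,*}\oplus E_{2,*}\) with \(E_1=\cV\). Since \(\theta\) commutes with \(\pi\) on the dense set \(\Xo\), it commutes on \(X\), so \(\theta\) is block diagonal. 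Each block \(\theta_s\) lies in \(\End(E_s)\otimes\OmLog\), satisfies \(\theta_s\wedge\theta_s=0\), and preserves the induced filtrations. Orthogonality of the splitting splits \eqref{eq:HS} and preserves boundedness of \(|\theta_s|\). Uniform adaptedness and \eqref{eq:oneweight} pass to the summands as before, so induction applies.

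\emph{Main obstacle.} The only genuinely new points are the sign and algebra of the Higgs term in Step~1, and checking that \(\theta\) does not spoil the argument of Lemma~\ref{lem:c1current} near \(Z\cap\Xo\). Both are local on \(\Xo\), where everything is smooth, so I expect them to be routine. The boundary analysis along \(D\) needs no modification, because the boundedness assumption on \(|\theta|_{h,\omega}\) makes the Higgs contribution harmless there.
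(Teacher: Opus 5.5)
Your proposal follows the paper's proof essentially step for step. The shared ingredients are the pointwise identity coming from \(\theta\)-invariance, the determinant, integrability and cut-off lemmas reused unchanged, and the same induction on rank. In that induction \([\theta,\pi]\), a holomorphic section of \(\End(E)\otimes\OmLog\) vanishing on a dense open set, vanishes identically, so \(\theta\) is block diagonal.

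One correction is needed in Step~1. With the conventions of \eqref{eq:HS}, the trace identity is \(\tr\bigl(\pi\,\mathrm{i}\Lambda_\omega[\theta,\theta^{*h}]\bigr)=+|[\theta,\pi]|^2\), not \(-|[\theta,\pi]|^2\). To see this, write \(\theta=\begin{psmallmatrix}\theta_1&\varrho\\0&\theta_2\end{psmallmatrix}\) for \(E=\cV\oplus\cV^\perp\). Then \([\theta,\pi]=-\varrho\), and the \(\cV\)-block of \([\theta,\theta^{*h}]\) is \([\theta_1,\theta_1^{*}]+\varrho\wedge\varrho^{*}\). The commutator has zero contracted trace, while \(\tr\,\mathrm{i}\Lambda_\omega(\varrho\wedge\varrho^{*})=|\varrho|^2\ge0\). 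As a check, take the invariant subbundle \(\cV=TX\) of Simpson's bundle in the proof of Theorem~C. There the block \(t^2\Id_{TX}\) has trace \(nt^2\), which equals \(|[t\theta,\pi_{TX}]|^2=|t\theta|^2\).

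Your displayed formula \(\alpha_\cV\wedge\Phi_\omega=\frac1{2\pi}\bigl(\mu k-|\bar\partial\pi|^2-|[\theta,\pi]|^2\bigr)dV_\omega\) is what follows from the correct sign. The sign you wrote would give \(\tr(\pi\,\mathrm{i}\Lambda_\omega F_h)=\mu k+|[\theta,\pi]|^2\) via \eqref{eq:HS}, and the Higgs term would then work against semistability. This sign is exactly where \(\theta\)-invariance enters, which you yourself flagged as the new point, so it should be stated correctly. With that fixed, the argument is complete and matches the paper's.
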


\begin{proof}
Let \(Z\) be an exceptional set supplied by Lemma~\ref{lem:goodlocus}. On
\(\Xo\setminus Z\) write \(E=\cV\oplus\cV^{\perp}\). Since \(\cV\) is
\(\theta\)-invariant,
\(\theta=\begin{psmallmatrix}\theta_1&\varrho\\0&\theta_2\end{psmallmatrix}\) and
\([\theta,\pi_\cV]=-\varrho\). The \(\cV\)-block of \([\theta,\theta^{*h}]\) is
\([\theta_1,\theta_1^{*}]+\varrho\wedge\varrho^{*}\); the contracted trace of a commutator
vanishes and \(\tr\mathrm{i}\Lambda_\omega(\varrho\wedge\varrho^{*})=|\varrho|^2\), so
\eqref{eq:HS} gives
\begin{equation}\label{eq:higgs-trace}
   \tr\bigl(\pi_\cV\,\mathrm{i}\Lambda_\omega F_h\bigr)=\mu k-|[\theta,\pi_\cV]|^2
   \qquad\text{on }\Xo\setminus Z .
\end{equation}
Substituting \eqref{eq:higgs-trace} into \eqref{eq:CWpointwise}, the proof of
Theorem~\ref{thm:degree} applies verbatim: the
determinant estimates of Lemmas~\ref{lem:upperdet} and~\ref{lem:lowerdet}, the
integrability of Lemma~\ref{lem:weightedL1} and the cut-off of Lemma~\ref{lem:cutoff}
involve only \(h\), \(\omega\) and the parabolic structure, while
\(|\Lambda_\omega F_h|_h\le C\) by \eqref{eq:HS} and the boundedness of \(\theta\).
This gives \eqref{eq:higgsdegree}, with the nonnegative integrand
\(|\bar\partial\pi_\cV|^2+|[\theta,\pi_\cV]|^2\) in place of \(|\bar\partial\pi_\cV|^2\),
and both integrals are finite.

Taking \(\cV=E\) gives \(\mu_\Omega(E_*)=\mu\Vol_\omega(\Xo)/2\pi\), so
\eqref{eq:higgsdegree} is semistability, with equality if and only if
\(\bar\partial\pi_\cV=0\) and \([\theta,\pi_\cV]=0\); if no such \(\cV\) attains equality,
\((E_*,\theta)\) is stable. Otherwise, as in the
proof of Theorem~\ref{thm:HEcriterion}, \(\pi_\cV\) extends to an idempotent
\(\pi\in P(E_*)\) with \(\im\pi=\cV\), and Lemma~\ref{lem:directsummands} gives
\(E_*=E_{1,*}\oplus E_{2,*}\) splitting every filtration. The section
\([\theta,\pi]\in H^0\bigl(X,\End(E)\otimes\OmLog\bigr)\) vanishes on the dense open set
\(\Xo\setminus Z\), hence vanishes identically, so \(\theta\)
is block diagonal for this decomposition and each summand is a parabolic Higgs bundle. The
restricted metrics are uniformly adapted to the corresponding parabolic structures and
satisfy \eqref{eq:HS} with the same \(\mu\), by orthogonality of the splitting, and
\(|\theta_s|_{h_s,\omega}\le|\theta|_{h,\omega}\). The weight condition
\eqref{eq:oneweight} is inherited by the summands, so induction on the rank completes the
proof.
\end{proof}

\subsection{Simpson's canonical parabolic Higgs bundle}\label{ss:canonical}

\begin{definition}\label{def:HSbundle}
Let \(E:=\cO_X\oplus TX\) with the direct sum parabolic structure
\begin{equation}\label{eq:thetaHS}
F^i_c(E_*):=\cO_{D_i}\oplus F^i_c(TX_*) , \qquad
   \theta:=\begin{pmatrix}0&0\\ \Id_{TX}&0\end{pmatrix} .
\end{equation}
Here
\[
\Id_{TX}\in H^0(X,TX\otimes\Omega^1_X)\subset H^0(X,TX\otimes\OmLog) .
\]
\end{definition}

In the smooth case \((E,\theta)\) is the dual of Simpson's canonical system of Hodge
bundles for ball uniformization \cite[Props.~9.8--9.9]{Simpson}. We call \(\theta\) the
\emph{tautological Higgs field}: evaluated on a tangent vector it is
\(\theta(v)(a,u)=(0,av)\), where \(v\in TX\) and \((a,u)\in\cO_X\oplus TX\); so \(\theta\) is holomorphic across \(D\), although it is used as a logarithmic Higgs
field. By the direct-sum definition of the filtrations and
\eqref{eq:tangentmultiweights}, \(E_*\) is locally abelian: in an adapted chart
\eqref{eq:adap-coor}, \(1,\partial_{z_1},\dots,\partial_{z_n}\) is an adapted frame, the
first vector of multiweight zero and the remaining ones with the multiweights
\eqref{eq:tangentmultiweights}; in particular \(\Gr^i_{w_i}(E_*)=N_i\), and
\eqref{eq:TXzero} gives \((E)_{\mathbf 0}=\cO_X\oplus\Tlog\).

The block form \eqref{eq:thetaHS} gives \(\theta\wedge\theta=0\). In adapted coordinates
\eqref{eq:adap-coor},
\begin{equation}\label{eq:idlog}
   \Id_{TX}=\sum_{j=1}^{\ell}(z_j\partial_{z_j})\otimes\frac{dz_j}{z_j}
            +\sum_{j=\ell+1}^{n}\partial_{z_j}\otimes dz_j ,
\end{equation}
so that, in the logarithmic coframe,
\[
   \theta=\sum_{j=1}^{n}A_j\otimes\varepsilon_j,
   \qquad
   \varepsilon_j:=\begin{cases}dz_j/z_j,&j\le\ell,\\ dz_j,&j>\ell,\end{cases}
   \qquad
   A_j(a,u):=\begin{cases}(0,a\,z_j\partial_{z_j}),&j\le\ell,\\
                          (0,a\,\partial_{z_j}),&j>\ell .\end{cases}
\]
Fix \(j\le\ell\). By \eqref{eq:idlog}, the residue \(\Res_{D_j}\theta=A_j|_{D_j}\)
vanishes, because \(z_j\partial_{z_j}\) does; every remaining coefficient restricts on
\(D_j\) to a section of \(TD_j\); and every \(A_m\) annihilates the \(TX\) summand. Hence,
for \(c<w_j\), the step \(F^j_c(E_*)=\cO_{D_j}\oplus TD_j\) is preserved by every
\(A_m|_{D_j}\), while for \(c\ge w_j\) the step is the whole bundle and preservation is
automatic. Thus \((E_*,\theta)\) is a parabolic Higgs bundle.

\subsection{Proof of Theorem C}
Let \(\omega\) be a K\"ahler--Einstein cone metric as in Theorem~C, with
\(\Ric(\omega)=\lambda\omega\), \(\lambda<0\), and let \(g\) be the induced Hermitian
metric on \(TX|_{\Xo}\). In the proof of Theorem~B the diagonal blocks of
\(\mathrm{i}\Lambda_\omega F_{h_\omega}\) are \(t^2n\) and \(\lambda-t^2\), which agree
only for \(t^2=\lambda/(n+1)\), impossible when \(\lambda<0\). For the direct sum
\(\cO_X\oplus TX\) with the metric \(1\oplus g\), the Higgs commutator in \eqref{eq:HS}
contributes with the opposite signs. Set
\begin{equation}\label{eq:tnegative}
   t:=\sqrt{\frac{-\lambda}{n+1}}.
\end{equation}

Let \(h:=1\oplus g\) on \(E|_{\Xo}\). By \eqref{eq:tangentmodel} and the multiweights of
the adapted frame \(1,\partial_{z_1},\dots,\partial_{z_n}\), \(h\) is uniformly adapted to
\(E_*\), and \(|t\theta|^2_{h,\omega}=t^2n\) is constant. In a \(g\)-unitary coframe,
\[
   \mathrm{i}\Lambda_\omega\bigl[t\theta,(t\theta)^{*h}\bigr]
   =\begin{pmatrix}-t^2n&0\\0&t^2\Id_{TX}\end{pmatrix};
\]
its trace vanishes, as it must for a commutator.
Since \(F_h=0\oplus F_g\) and \(\mathrm{i}\Lambda_\omega F_g=\lambda\Id_{TX}\) by
Section~\ref{sec:thmA}, the left side of
\eqref{eq:HS} equals \(-t^2n\) on \(\cO_X\) and \((\lambda+t^2)\Id\) on \(TX\).
By \eqref{eq:tnegative}, these agree and have common value \(n\lambda/(n+1)\).
Proposition~\ref{prop:higgsdegree} shows that \((E_*,t\theta)\) is polystable.
Since \(t>0\), the \(\theta\)-invariant parabolic subsheaves and decompositions are
exactly the \(t\theta\)-invariant ones. Thus \((E_*,\theta)\) is parabolic Higgs
polystable.\qed

\section{Proof of Theorem D}\label{sec:thmD}

\subsection{Parabolic Chern classes}
Throughout this section \(n\ge2\). For \(i<j\), set \(D_{ij}:=D_i\cap D_j\). If \(E_*\) is
locally abelian, its common adapted frames make
\begin{equation}\label{eq:simultaneousgraded}
   \Gr^{i,j}_{a,b}(E_*):=
   \frac{F^i_a|_{D_{ij}}\cap F^j_b|_{D_{ij}}}
   {\bigl(F^i_{<a}|_{D_{ij}}\cap F^j_b|_{D_{ij}}\bigr)
    +\bigl(F^i_a|_{D_{ij}}\cap F^j_{<b}|_{D_{ij}}\bigr)}
\end{equation}
a vector bundle on each connected component \(P\subset D_{ij}\); denote its rank there
by \(\rk_P\Gr^{i,j}_{a,b}(E_*)\).

\begin{definition}\label{def:parchern}
For a locally abelian parabolic bundle \(E_*\), Mochizuki's formula in our convention
\cite[\S\S3.1.2, 3.1.5]{Mochizuki} is
\begin{align}
   \parch_2(E_*)={}&\ch_2(E)
   -\sum_i\sum_{a\in\operatorname{Wt}_i(E_*)}
      a\,(\iota_i)_*c_1\bigl(\Gr^i_a(E_*)\bigr) \notag\\
   &+\frac12\sum_i\sum_{a\in\operatorname{Wt}_i(E_*)}
      a^2\rk\Gr^i_a(E_*)[D_i]^2 \notag\\
   &+\sum_{i<j}\sum_{P\in\pi_0(D_{ij})}
      \left(\sum_{\substack{a\in\operatorname{Wt}_i(E_*)\\
                             b\in\operatorname{Wt}_j(E_*)}}
      ab\,\rk_P\Gr^{i,j}_{a,b}(E_*)\right)[P].
   \label{eq:Mochizuki-pch2}
\end{align}
Here \([P]\in H^4(X,\R)\) is the class of \(P\). We set
\begin{equation}\label{eq:parc2def}
   \parc_2(E_*):=\frac12\parc_1(E_*)^2-\parch_2(E_*).
\end{equation}
These are also the formulas used in \cite[Defs.~4.7 and 4.17]{dBP}.
\end{definition}

Write \(\ch_2(X):=\ch_2(TX)\).

\begin{proposition}
\label{prop:rankclasses}
Let \(E_*\) be any of the parabolic bundles in Theorems A, B, C. Then
\begin{align}
   \parc_1(E_*)&=c_1(X,\Delta),\label{eq:pc1TX}\\
   \parch_2(E_*)&=\ch_2(X)-\sum_i\Bigl(w_i-\frac{w_i^2}{2}\Bigr)[D_i]^2,
      \label{eq:pch2TX}\\
   \parc_2(E_*)&=c_2(X,\Delta).\label{eq:pc2TX}
\end{align}
\end{proposition}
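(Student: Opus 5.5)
The plan is to compute the three classes directly from Definition~\ref{def:pardeg} and Definition~\ref{def:parchern}, using the explicit adapted frames found earlier. In all three cases the adapted frame is \(\partial_{z_1},\dots,\partial_{z_n}\), with \(1,\partial_{z_1},\dots,\partial_{z_n}\) for Theorems~B and~C, and the multiweights are \eqref{eq:tangentmultiweights} together with multiweight zero on the extra vector \(1\). So the parabolic data along the divisor coincide in all three cases, and only the underlying bundle changes.

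\emph{Underlying Chern classes.} For Theorem~A the underlying bundle is \(TX\). For Theorem~C it is \(\cO_X\oplus TX\). For Theorem~B it is an extension of \(TX\) by \(\cO_X\). By the Whitney formula and additivity of \(\ch\) on short exact sequences, all three have \(c_1=c_1(X)\), \(c_2=c_2(X)\) and \(\ch_2=\ch_2(X)\).

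\emph{First Chern class.} Along each \(D_i\) the only positive weight is \(w_i\), and \(\Gr^i_{w_i}(E_*)=N_i\) has rank one, as recorded in Sections~\ref{sec:thmA}--\ref{sec:thmC}. Substituting into \eqref{eq:parc1} gives \eqref{eq:pc1TX}.

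\emph{Second Chern character.} I evaluate \eqref{eq:Mochizuki-pch2} term by term.
\begin{itemize}
\item The weight-zero pieces contribute nothing.
\item The weight-\(w_i\) piece is the normal bundle \(N_i\cong\cO_X(D_i)|_{D_i}\). Hence \((\iota_i)_*c_1(N_i)=[D_i]^2\), which gives the term \(-w_i[D_i]^2\).
\item The quadratic term contributes \(\tfrac12 w_i^2[D_i]^2\).
\end{itemize}
For the mixed term with \(i<j\), I compute \(\Gr^{i,j}_{a,b}\) on \(D_{ij}\) using the common adapted frame. The rank \(\rk_P\Gr^{i,j}_{a,b}\) counts the frame vectors with \(\operatorname{wt}_{D_i}=a\) and \(\operatorname{wt}_{D_j}=b\). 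The coefficient \(ab\) is nonzero only when \(a=w_i\) and \(b=w_j\). By \eqref{eq:tangentmultiweights}, no frame vector has both weights positive: \(\partial_{z_i}\) has \(D_j\)-weight \(0\), and \(\partial_{z_j}\) has \(D_i\)-weight \(0\). So every mixed term vanishes, and \eqref{eq:pch2TX} follows. I will check this frame-counting interpretation of \eqref{eq:simultaneousgraded} carefully, since it is the only place where the normal crossing geometry enters. It should follow from the fact that the frame splits both filtrations simultaneously.

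\emph{Second Chern class.} Insert \eqref{eq:pc1TX} and \eqref{eq:pch2TX} into \eqref{eq:parc2def} and expand the square:
\[
\tfrac12 c_1(X,\Delta)^2=\tfrac12 c_1(X)^2-\sum_iw_i\,c_1(X)[D_i]+\tfrac12\sum_iw_i^2[D_i]^2+\sum_{i<j}w_iw_j[D_i][D_j].
\]
Use \(\tfrac12 c_1(X)^2-\ch_2(X)=c_2(X)\). The \(w_i^2\) terms cancel, and what remains is
\[
c_2(X)-\sum_iw_i\bigl(c_1(X)[D_i]-[D_i]^2\bigr)+\sum_{i<j}w_iw_j[D_i][D_j].
\]
By adjunction, \(TD_i\) has \(c_1(TD_i)=\iota_i^*\bigl(c_1(X)-[D_i]\bigr)\). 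The projection formula then gives \((\iota_i)_*c_1(TD_i)=c_1(X)[D_i]-[D_i]^2\). Comparing with \eqref{eq:logc2} gives \eqref{eq:pc2TX}.

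The main obstacle is bookkeeping rather than substance. One has to make sure the sign and normalization conventions of \eqref{eq:Mochizuki-pch2} are applied correctly. One also has to confirm that the mixed graded pieces vanish on every component \(P\) of \(D_{ij}\); this is where the choice of a single common adapted frame is used.
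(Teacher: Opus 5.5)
Your proposal is correct and follows the paper's proof essentially step by step. The underlying classes agree with those of \(TX\), and the only positive-weight graded piece along \(D_i\) is \(N_i\cong\cO_X(D_i)|_{D_i}\), so \((\iota_i)_*c_1(N_i)=[D_i]^2\). The mixed terms vanish because no adapted frame vector has positive weight along two components. Adjunction then turns the expansion of \eqref{eq:parc2def} into \eqref{eq:logc2}. The step you flagged, reading \(\rk_P\Gr^{i,j}_{a,b}\) as a count of common adapted frame vectors with multiweight \((a,b)\), is justified by \eqref{eq:simultaneousgraded}, since that frame splits both filtrations simultaneously.
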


\begin{proof}
The exact sequence \eqref{eq:Tianextension} and the direct-sum description of \(E\)
show that, in all three cases,
\[
   c_1(E)=c_1(X),\qquad \ch_2(E)=\ch_2(X).
\]
Along \(D_i\), the only positive-weight graded piece is
\(\Gr^i_{w_i}(E_*)=N_i\), of rank one. Moreover, the adapted frames
\(\partial_{z_1},\dots,\partial_{z_n}\) and \(1,\partial_{z_1},\dots,\partial_{z_n}\) of
Sections~\ref{sec:thmA}--\ref{sec:thmC} have the multiweights
\eqref{eq:tangentmultiweights}, so no adapted frame vector has positive weight along two
distinct components; hence all simultaneous graded
pieces \eqref{eq:simultaneousgraded} with \(a,b>0\) vanish. Mochizuki's formulas
\eqref{eq:parc1} and \eqref{eq:Mochizuki-pch2} therefore give
\[
   \parc_1(E_*)=c_1(X)-\sum_iw_i[D_i]=c_1(X,\Delta)
\]
by \eqref{eq:logc1}, and
\[
   \parch_2(E_*)
   =\ch_2(X)-\sum_iw_i(\iota_i)_*c_1(N_i)
      +\frac12\sum_iw_i^2[D_i]^2.
\]
Since \(N_i\cong\cO_X(D_i)|_{D_i}\), one has
\((\iota_i)_*c_1(N_i)=[D_i]^2\), proving \eqref{eq:pch2TX}.

To compute \(\parc_2\), expand \eqref{eq:parc2def} and use
\(\ch_2(X)=\frac12c_1(X)^2-c_2(X)\):
\[
   \parc_2(E_*)=c_2(X)-\sum_iw_i\bigl(c_1(X)-[D_i]\bigr)[D_i]
     +\sum_{i<j}w_iw_j[D_i][D_j].
\]
Adjunction gives
\((\iota_i)_*c_1(TD_i)=\bigl(c_1(X)-[D_i]\bigr)[D_i]\), so the right-hand side equals
\(c_2(X,\Delta)\) by \eqref{eq:logc2}.
\end{proof}

\subsection{The Bogomolov--Gieseker inequality and the end of the proof}
Mochizuki \cite{Mochizuki} proved the parabolic Bogomolov--Gieseker inequality for parabolic Higgs bundles
in the projective setting, and Jiang--Li \cite{JiangLiBG} recently extended it to compact K\"ahler manifolds.
The following is the locally abelian bundle case of \cite[Theorem~A]{JiangLiBG}, written in our convention.

\begin{theorem}[{Jiang--Li \cite[Theorem~A]{JiangLiBG}}]\label{thm:JiangLiBG}
Let \(X\) be a compact K\"ahler manifold of dimension \(n\ge2\), let \(D\subset X\) be a
simple normal crossing divisor, and let \((E_*,\theta)\) be a locally abelian parabolic
Higgs bundle of rank \(r\), stable with respect to a K\"ahler class \(\Omega\). Then
\begin{equation}\label{eq:BGstable}
   \parch_2(E_*)\cdot\Omega^{n-2}
   \le\frac{\parc_1(E_*)^2\cdot\Omega^{n-2}}{2r}.
\end{equation}
\end{theorem}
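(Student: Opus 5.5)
The plan is to deduce the statement from \cite[Theorem~A]{JiangLiBG} by a dictionary between conventions. I would not reprove it. The work lies in checking that our objects, stability notion and characteristic classes match theirs.

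\emph{Objects.} Jiang--Li work with parabolic (Higgs) sheaves on \((X,D)\), given by families of coherent sheaves indexed by real multi-weights, possibly with decreasing filtrations. I would send a locally abelian \(E_*\) to the family of constituent sheaves \(\mathbf c\mapsto E_{\mathbf c}\) of \eqref{eq:constituentglobal}, extended \(\mathbb Z^N\)-periodically by \(E_{\mathbf c+\mathbf e_i}=E_{\mathbf c}(D_i)\). By \eqref{eq:constituents} each \(E_{\mathbf c}\) is locally free, so this is a locally abelian parabolic bundle in their sense. If their filtrations are decreasing, I would convert with the usual reflection \(c\mapsto 1-c\) or a shift, tracking where the weights end up.

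\emph{Higgs fields and stability.} The Higgs field \(\theta\in H^0(\End(E)\otimes\OmLog)\) preserves every \(F^i_c\). Hence it preserves every \(E_{\mathbf c}\), which is their compatibility condition for a parabolic Higgs field; the argument is that of Lemma~\ref{lem:parend}(i)\(\Rightarrow\)(iii), applied to the coefficients of \(\theta\) in the logarithmic coframe. For stability, saturated \(\theta\)-invariant subsheaves \(\cV\subset E\) correspond bijectively to saturated parabolic Higgs subsheaves \(\mathbf c\mapsto\cV\cap E_{\mathbf c}\). Their parabolic degree is \(\int\parc_1\wedge\Omega^{n-1}\), and comparing with \eqref{eq:parc1V} the two should agree after the normalisation of weights. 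So stability in the sense of Definition~\ref{def:parhiggs} is stability in their sense.

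\emph{Characteristic classes.} I would check that their \(\parch_2\) for locally abelian bundles equals Mochizuki's expression \eqref{eq:Mochizuki-pch2}. One way is to compute both sides in a common adapted frame. Another is the standard identity expressing \(\parch(E_*)\) as the formal average \(\int_{[0,1)^N}\ch(E_{\mathbf c})\,e^{-\sum c_i[D_i]}\,d\mathbf c\), using only \(\ch\) of the constituent bundles. Either should reproduce the four terms of \eqref{eq:Mochizuki-pch2}; \eqref{eq:BGstable} is then literally their inequality.

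\emph{Fallback.} If only the projective or rational-weight case were available, I would argue as follows. First perturb the real weights to nearby rational ones. Stability is an open condition in the weights, because only finitely many degree values occur for destabilising subsheaves, which are bounded. Then pass to an orbifold (Kawamata) cover, where parabolic bundles become ordinary equivariant bundles and the usual Bogomolov--Gieseker inequality applies. Finally let the weights tend back, using continuity of both sides of \eqref{eq:BGstable} in the weights. In the Kähler setting this covering trick is unavailable, which is why relying on Jiang--Li's analytic proof via admissible Hermitian--Einstein metrics is preferable.

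\emph{Main obstacle.} I expect the hardest step to be the convention matching for \(\parch_2\), especially the sign of the weights and the codimension-two correction terms on \(D_i\cap D_j\).
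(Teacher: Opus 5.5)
Your route is the paper's own. The paper gives no argument beyond quoting the statement as the locally abelian case of Jiang--Li's Theorem~A, rewritten in its conventions. Your dictionary is a reasonable expansion of that translation: constituent sheaves \(E_{\mathbf c}\), \(\theta\)-invariance of \(E_{\mathbf c}\) via Lemma~\ref{lem:parend}, saturated subsheaves, and Mochizuki's \(\parch_2\). Of your two checks of \(\parch_2\), the frame-by-frame comparison is the one to carry out.

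The other shortcut you offer is false. In general \(\parch(E_*)\ne\int_{[0,1)^N}\ch(E_{\mathbf c})\,e^{-\sum_ic_i[D_i]}\,d\mathbf c\). Take a line bundle \(L\) with weight \(a\) along a smooth \(D\), so that \(E_c=L(-D)\) for \(c<a\) and \(E_c=L\) for \(c\ge a\). Then the integral equals \(e^{c_1(L)-a[D]}\,(1-e^{-[D]})/[D]\), whose degree-one part is \(\parc_1(L_*)-\tfrac12[D]\). The correct identity carries the extra Todd-type factor \(\prod_i[D_i]/(1-e^{-[D_i]})\). It comes from Grothendieck--Riemann--Roch applied to the push-forwards of the quotients \(E|_{D_i}/F^i_c\). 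If you drop the exponential instead, the same example gives the right \(\parc_1\) but \(\tfrac a2[D]^2\) in place of \(\tfrac{a^2}{2}[D]^2\) in degree two. So this route would not reproduce \eqref{eq:Mochizuki-pch2} as stated.

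Separately, your fallback gets a uniform gap in destabilising slopes from discreteness of degrees. That requires \(\Omega\) to be a rational class, so even on projective \(X\) it would not cover an arbitrary K\"ahler class.
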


We will use the following immediate extension to the polystable case.

\begin{lemma}\label{lem:BGpoly}
Inequality \eqref{eq:BGstable} also holds for \(\Omega\)-polystable locally abelian
parabolic Higgs bundles.
\end{lemma}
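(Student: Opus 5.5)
Write the polystable bundle as \((E_*,\theta)=\bigoplus_{s=1}^m(E_{s,*},\theta_s)\), where each summand is stable of rank \(r_s\) and has the common slope \(\mu:=\mu_\Omega(E_*)\), and the decomposition splits every filtration. Each summand is locally abelian. This should follow by the same frame argument as Lemma~\ref{lem:directsummands}: the projection onto a summand is a holomorphic idempotent lying in \(P(E_*)\), because it preserves every \(F^i_c\). Theorem~\ref{thm:JiangLiBG} then applies to each summand. Put \(a_s:=\parc_1(E_{s,*})\in H^2(X,\R)\) and write \(Q(x,y):=x\cdot y\cdot\Omega^{n-2}\). This gives
\[
   \parch_2(E_{s,*})\cdot\Omega^{n-2}\le \frac{Q(a_s,a_s)}{2r_s}.
\]

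Next we use additivity. Every term in Definitions~\ref{def:pardeg} and \ref{def:parchern} is additive under direct sums that split the filtrations. Indeed, \(c_1\) and \(\ch_2\) of the underlying bundle are additive, and the graded pieces \(\Gr^i_a\) and \(\Gr^{i,j}_{a,b}\) of a split filtration are direct sums, so their ranks and first Chern classes add. Hence \(\parc_1(E_*)=\sum_s a_s\) and \(\parch_2(E_*)=\sum_s\parch_2(E_{s,*})\). It therefore suffices to show
\[
   \sum_s\frac{Q(a_s,a_s)}{r_s}\le\frac{Q(a,a)}{r},\qquad a:=\sum_s a_s,\quad r:=\sum_s r_s .
\]

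The main step is a Hodge index argument. Set \(b_s:=a_s-\frac{r_s}{r}a\). Expanding gives \(\sum_s Q(a_s,a_s)/r_s-Q(a,a)/r=\sum_s Q(b_s,b_s)/r_s\), because \(\sum_s b_s=0\). The equal-slope condition says \(\int a_s\wedge\Omega^{n-1}/r_s\) is independent of \(s\). Thus \(b_s\cdot\Omega^{n-1}=0\), so each real class \(b_s\) (of type \((1,1)\)) is primitive. The Hodge--Riemann bilinear relations on a compact Kähler manifold give \(Q(b,b)\le0\) for primitive real \((1,1)\)-classes, so each term \(Q(b_s,b_s)\) is nonpositive, which proves the inequality. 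For \(n=2\) this is the Hodge index theorem.

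The only delicate point is the first one: checking that stable summands are locally abelian parabolic Higgs bundles, so that Jiang--Li's theorem applies. This relies on Lemma~\ref{lem:directsummands}. The Higgs field is preserved summandwise because polystability, by definition, makes \(\theta\) block diagonal.
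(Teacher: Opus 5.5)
Your proposal is correct and matches the paper's proof step for step. Both arguments show the summands are locally abelian via Lemma~\ref{lem:directsummands}, apply Jiang--Li to each stable factor, and use additivity of \(\parc_1\) and \(\parch_2\). Both then decompose into the \(\Omega\)-primitive classes \(b_s=a_s-\frac{r_s}{r}a\), to which the Hodge--Riemann relations apply. Your observation that each summand projection lies in \(P(E_*)\) just spells out the paper's one-line appeal to Lemma~\ref{lem:directsummands}.
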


\begin{proof}
Write \((\mathcal G_*,\theta)=\bigoplus_\alpha(\mathcal G_{\alpha,*},\theta_\alpha)\) with
stable factors of the same parabolic slope. Since the decomposition splits every
parabolic filtration, Lemma~\ref{lem:directsummands} shows that each
\(\mathcal G_{\alpha,*}\) is locally abelian. Put \(r_\alpha:=\rk\mathcal G_\alpha\),
\(r:=\sum_\alpha r_\alpha\), \(\gamma_\alpha:=\parc_1(\mathcal G_{\alpha,*})\) and
\(\gamma:=\sum_\alpha\gamma_\alpha\). Since the parabolic Chern character is additive on
direct sums,
\[
   \parc_1(\mathcal G_*)=\gamma,
   \qquad
   \parch_2(\mathcal G_*)=\sum_\alpha\parch_2(\mathcal G_{\alpha,*}),
\]
and summing \eqref{eq:BGstable} over the factors,
\begin{equation}\label{eq:BGsum}
   \parch_2(\mathcal G_*)\cdot\Omega^{n-2}
   \le\sum_\alpha\frac{\gamma_\alpha^2\cdot\Omega^{n-2}}{2r_\alpha}.
\end{equation}
Put \(\zeta_\alpha:=\gamma_\alpha-(r_\alpha/r)\gamma\). A direct expansion gives
\begin{equation}\label{eq:BGdecomp}
   \sum_\alpha\frac{\gamma_\alpha^2}{2r_\alpha}
   =\frac{\gamma^2}{2r}+\sum_\alpha\frac{\zeta_\alpha^2}{2r_\alpha}.
\end{equation}
Equality of slopes means \(\gamma_\alpha\cdot\Omega^{n-1}/r_\alpha=\gamma\cdot\Omega^{n-1}/r\),
so each \(\zeta_\alpha\) is a real \((1,1)\)-class with \(\zeta_\alpha\cdot\Omega^{n-1}=0\),
that is, primitive; the Hodge--Riemann bilinear relations give
\(\zeta_\alpha^2\cdot\Omega^{n-2}\le0\).
Combining \eqref{eq:BGsum} and \eqref{eq:BGdecomp} with the Hodge--Riemann inequality
gives \eqref{eq:BGstable} for \(\mathcal G_*\).
\end{proof}

\begin{proof}[Proof of Theorem~D]
Write \(\Ric(\omega)=\lambda\omega\). Taking cohomology classes in
\eqref{eq:Ric-current} gives
\begin{equation}\label{eq:KEclass}
   c_1(X,\Delta)=\frac{\lambda}{2\pi}\,\Omega .
\end{equation}
If \(\lambda=0\), then
\[
   \parc_1(TX_*)=c_1(X,\Delta)=0
\]
by \eqref{eq:KEclass} and \eqref{eq:pc1TX}. By Theorem~A, \(TX_*\) is parabolic
polystable. Hence Lemma~\ref{lem:BGpoly} gives
\[
   \parch_2(TX_*)\cdot\Omega^{n-2}\le0 .
\]
Therefore
\[
   \parc_2(TX_*)\cdot\Omega^{n-2}\ge0 ,
\]
which is \eqref{eq:MYintro} in this case by \eqref{eq:pc1TX} and \eqref{eq:pc2TX}.

Suppose \(\lambda\ne0\), and let \((E_*,\theta)\) be the parabolic Tian extension of
Definition~\ref{def:Tian-parabolic} with \(\theta=0\) if \(\lambda>0\), and the canonical
parabolic Higgs bundle of Definition~\ref{def:HSbundle} if \(\lambda<0\). Then \(E_*\) has
rank \(n+1\) and \((E_*,\theta)\) is parabolic Higgs polystable with respect to
\(\Omega\), by Theorems~B and~C. Hence Lemma~\ref{lem:BGpoly} gives
\[
   \parch_2(E_*)\cdot\Omega^{n-2}\le\frac{\parc_1(E_*)^2\cdot\Omega^{n-2}}{2(n+1)} .
\]
Substituting \(\parch_2=\frac12\parc_1^2-\parc_2\) and rearranging,
\[
   \Bigl(2(n+1)\parc_2(E_*)-n\,\parc_1(E_*)^2\Bigr)\cdot\Omega^{n-2}\ge0,
\]
and \eqref{eq:pc1TX} and \eqref{eq:pc2TX} identify \(\parc_1(E_*)=c_1(X,\Delta)\) and
\(\parc_2(E_*)=c_2(X,\Delta)\), giving \eqref{eq:MYintro}.
\end{proof}

\bibliographystyle{amsplain}
\bibliography{ref}

\end{document}